\tikzset{
commutative diagrams/.cd,
arrow style=tikz,
diagrams={>=latex}
}
\newcommand{\bbC}{\mathbb{C}}
\newcommand{\bbD}{\mathbb{D}}
\newcommand{\bbF}{\mathbb{F}}
\newcommand{\bbI}{\mathbb{I}}
\newcommand{\bbJ}{\mathbb{J}}
\newcommand{\bbN}{\mathbb{N}}
\newcommand{\bbO}{\mathbb{O}}
\newcommand{\bbQ}{\mathbb{Q}}
\newcommand{\bbY}{\mathbb{Y}}
\newcommand{\bbZ}{\mathbb{Z}}
\newcommand{\bfH}{\mathbf{H}}
\newcommand{\bfM}{\mathbf{M}}
\newcommand{\calA}{\mathcal{A}}
\newcommand{\calB}{\mathcal{B}}
\newcommand{\calC}{\mathcal{C}}
\newcommand{\calD}{\mathcal{D}}
\newcommand{\calE}{\mathcal{E}}
\newcommand{\calF}{\mathcal{F}}
\newcommand{\calG}{\mathcal{G}}
\newcommand{\calH}{\mathcal{H}}
\newcommand{\calL}{\mathcal{L}}
\newcommand{\calM}{\mathcal{M}}
\newcommand{\calN}{\mathcal{N}}
\newcommand{\calR}{\mathcal{R}}
\newcommand{\calS}{\mathcal{S}}
\newcommand{\calT}{\mathcal{T}}
\newcommand{\calW}{\mathcal{W}}
\newcommand{\frakm}{\mathfrak{m}}
\newcommand{\bbNe}{\bbN_{\geq 1}}
\newcommand{\fa}{\forall\,}
\newcommand{\esex}{\exists\:}
\newcommand{\inv}{^{-1}}
\newcommand{\eingeschr}[1]{_{\vert #1}}
\newcommand{\id}[1][]{\operatorname{id}_{#1}}
\newcommand{\unterstr}[1]{{\underline{\smash{#1}}}}
\newcommand{\mbo}{\mbox{ }}
\newcommand{\startabc}{\begin{compactenum}[a)]}
\newcommand{\stopabc}{\end{compactenum}}
\newcommand{\op}{^{\operatorname{op}}}
\newcommand{\mor}[2]{#1 \left(#2 \right)}
\newcommand{\katalg}[1]{\unterstr{\operatorname{Alg}}_{#1}}
\newcommand{\moralg}[2]{\mor{\katalg{#1}}{#2}}
\newcommand{\katcalg}[1]{\unterstr{\operatorname{CAlg}}_{#1}}
\newcommand{\dimension}[2][]{\operatorname{dim}_{#1}\left(#2\right)}
\newcommand{\einheitsmatrix}[1][]{\mathbbm{1}_{#1}}
\newcommand{\Kern}[1]{\operatorname{Ker}\left(#1\right)}
\newcommand{\Kokern}[1]{\operatorname{Coker}\left(#1\right)}
\newcommand{\Hom}[2][]{\operatorname{Hom}_{#1}\left( #2\right)}
\newcommand{\End}[2][]{\operatorname{End}_{#1}\left( #2\right)}
\newcommand{\Aut}[2][]{\operatorname{Aut}_{#1}\left( #2\right)}
\newcommand{\Der}[2][K]{\operatorname{Der}_{#1}\left( #2 \right)}
\newcommand{\Ext}[3][1]{\operatorname{Ext}_{#2}^{#1}\left(#3\right)}
\newcommand{\lsc}{\underset{\rightarrow}{.}}
\newcommand{\rsc}{\underset{\leftarrow}{.}}
\newcommand{\knotensub}[1][Q]{{\mathsf{v} \left( #1 \right)}}
\newcommand{\chara}[1]{\operatorname{char}\left(#1\right)}
\newcommand{\stab}[2][]{S(#2)}
\newcommand{\orbit}[1]{\bbO_{#1}}
\newcommand{\orbitmap}[1]{\vartheta_{#1}}
\newcommand{\spec}[1]{\operatorname{Spec}\left(#1\right)}
\newcommand{\eintragQuelle}[7][1=2]{
\ifthenelse{#1}{
\bibitem[#2]{#3}
#4, \emph{#5}, #6 #7.
}{}
}
\newtheoremstyle{pkt}
{5pt}
{5pt}
{}
{}
{\bf}
{}
{.5em}
{}
\theoremstyle{pkt}
\newtheorem{de}{Definition}[section]
\newtheorem{lem}[de]{Lemma}
\newtheorem{bsp}[de]{Example}
\newtheorem{sa}[de]{Proposition}
\newtheorem{kor}[de]{Corollary}
\newtheorem{thm}[de]{Theorem}
\renewenvironment{proof}[1][\proofname] {\par\pushQED{\qed}\normalfont\topsep6\p@\@plus6\p@\relax\trivlist\item[\hskip\labelsep{\itshape#1}\@addpunct{:}]\ignorespaces}{\popQED\endtrivlist\@endpefalse}
\newcommand{\adjunction}[2]{\underset{#2}{\overset{#1}{\longleftrightarrows}}}
\newcommand{\leftrarrows}{\mathrel{\raise.75ex\hbox{\oalign{%
  $\scriptstyle\leftarrow$\cr
  \vrule width0pt height.5ex$\hfil\scriptstyle\relbar$\cr}}}}
\newcommand{\lrightarrows}{\mathrel{\raise.75ex\hbox{\oalign{%
  $\scriptstyle\relbar$\hfil\cr
  $\scriptstyle\vrule width0pt height.5ex\smash\rightarrow$\cr}}}}
\newcommand{\lefthooks}{\mathrel{\raise.75ex\hbox{\oalign{%
  $\scriptstyle\hookrightarrow$\cr
  \vrule width0pt height.5ex$\hfil\scriptstyle\relbar$\cr}}}}
\newcommand{\righthooks}{\mathrel{\raise.75ex\hbox{\oalign{%
  $\scriptstyle\relbar$\hfil\cr
  $\scriptstyle\vrule width0pt height.5ex\smash{\rotatebox{180}{\reflectbox{$\hookrightarrow$}}}$\cr}}}}
\newcommand{\Rrelbar}{\mathrel{\raise.75ex\hbox{\oalign{%
  $\scriptstyle\relbar$\cr
  \vrule width0pt height.5ex$\scriptstyle\relbar$}}}}
\newcommand{\longleftrightarrows}{\leftrarrows\joinrel\Rrelbar\joinrel\lrightarrows}
\renewcommand\@seccntformat[1]{\csname prepend@#1\endcsname}
\newcommand{\prepend@section}{\thesection\quad}
\newcommand{\prepend@subsection}{\thesubsection\quad}
\newcommand{\prepend@subsubsection}{\thesubsubsection\quad}
\newcommand{\GITquot}[2]{{#1}/\!\!/{#2}}
\newcommand{\cplhallalg}[1]{\bfH(\! ( #1 )\!)}
\newcommand{\dimvec}{\unterstr{\operatorname{dim}}}
\newcommand{\cplmonoidalg}[2][\bbQ]{{#1} [\! [{#2} ]\! ]}
\newcommand{\twmonoidalg}[3][\bbQ]{{#1}^{\text{{#2}tw}} [{#3}]}
\newcommand{\twcplmonoidalg}[3][\bbQ]{{#1}^{\text{{#2}tw}} [\! [{#3} ]\! ]}
\newcommand{\ev}[1][q]{\operatorname{ev}_{#1}}
\newcommand{\GL}[1]{\mathbf{GL}_{#1}}
\newcommand{\PGL}[1]{\mathbf{PGL}_{#1}}
\newcommand{\SL}[1]{\mathbf{SL}_{#1}}
\newcommand{\PSL}[1]{\mathbf{PSL}_{#1}}
\newcommand{\HNN}[3]{{#1}*_{#2}^{#3}}
\newcommand{\doppelpfeil}[2]{\underset{#2}{\overset{#1}{\rightrightarrows}}}
\newcommand{\Exp}{\operatorname{Exp}}
\newcommand{\Log}{\operatorname{Log}}
\newcommand{\mongrad}[1][.]{\vert\,#1\,\vert}
\newcommand{\simp}[1][]{\operatorname{sim}_{#1}}
\newcommand{\ssimp}[1][]{\operatorname{ssim}_{#1}}
\newcommand{\absimp}[1][]{\operatorname{absim}_{#1}}
\newcommand{\iso}[1][]{\operatorname{iso}_{#1}}
\newcommand{\correctionform}{{\bbY}}
\newcommand{\countfct}[3][sim]{r^{\operatorname{#1}#2}_{#3}}
\newcommand{\countpol}[3][sim]{R^{\operatorname{#1}#2}_{#3}}
\newcommand{\countgl}[1][a]{{P_{\GL{#1}}}}
\newcommand{\modulirep}[3][]{M^{\operatorname{#1}}(#2,#3)}
\newcommand{\fakesubsection}[1]{%
  \par\refstepcounter{subsection}
  \subsectionmark{#1}
  \addcontentsline{toc}{subsection}{\protect\numberline{\thesubsection}#1}
}
\newcommand{\fakesubsubsection}[1]{%
  \par\refstepcounter{subsubsection}
  \subsubsectionmark{#1}
  \addcontentsline{toc}{subsubsection}{\protect\numberline{\thesubsubsection}#1}
}
\newcommand{\pr}{\operatorname{pr}}
\newcommand{\localpol}{\bbQ[s]_{(s-q)}}
\newcommand{\thmabsatz}{\mbo\mbo\mbo}
\newcommand{\defit}[2][1]{{\textit{#2}}}
\newcommand{\paramExtDihed}{c}
\begin{document}
\title[Arithmetic representation growth of virtually free groups]
      {Arithmetic representation growth of virtually free groups}
\author[Fabian Korthauer]{Fabian Korthauer}
\address{Mathematisches Institut, Heinrich-Heine-Universit\"at, 40204 D\"usseldorf, Germany}
\curraddr{}
\email{Korthauer.maths@gmx.de}
\subjclass[2010]{20C07, 16G99, 14L30, 14D20, 32S35}
\begin{abstract}
We adapt methods from quiver representation theory and Hall \mbox{algebra} techniques to the counting of representations of virtually free groups over finite fields. This gives rise to the computation of the E-polynomials of \mbox{$\mathbf{GL}_d(\mathbb{C})$-character} varieties of virtually free groups. As examples we discuss the representation theory of $\mathbb{D}_\infty$ , $\mathbf{PSL}_2(\mathbb{Z})$ , $\mathbf{SL}_2(\mathbb{Z})$ , $\mathbf{GL}_2(\mathbb{Z})$\text{ and }$\mathbf{PGL}_2(\mathbb{Z})$ .
\end{abstract}
\maketitle
\section{Introduction}
\noindent Arithmetic representation growth deals with counting the number of \mbox{representations} of algebras over finite fields. More precisely it is the study of the following \defit{counting functions:} For $\calA$ a finite type\footnote{i.e. finitely generated as an $\bbF_q$-algebra} $\bbF_q$-algebra and $d\in\bbN_0, \alpha\in\bbNe$ define 
\begin{equation}
\label{equ_def_countfct_ordinary}
\begin{array}{ccl}
\countfct[ss]{,\calA}{d}(q^\alpha) &:=& \#\ssimp[d]\left(\calA\otimes_{\bbF_q} \bbF_{q^\alpha}\right)
\\[2pt]
\countfct[sim]{,\calA}{d}(q^\alpha) &:=& \#\simp[d]\left(\calA\otimes_{\bbF_q} \bbF_{q^\alpha}\right)
\\[2pt]
\countfct[absim]{,\calA}{d}(q^\alpha) &:=& \#\absimp[d]\left(\calA\otimes_{\bbF_q} \bbF_{q^\alpha}\right)
\end{array}
\end{equation}
Here we denote by $\iso[d](\calB) \supseteq\ssimp[d](\calB)\supseteq\simp[d](\calB)\supseteq\absimp[d](\calB)$ for each $d$ the sets of \mbox{isomorphism} classes of all, of all semisimple, of all simple and of all absolutely simple left modules $\calM$ over a $K$-algebra $\calB$ of dimension $\dimension[K]{\calM}=d$ . Recall that a left $\calB$-module $\calM$ is called \defit{absolutely simple} if it is simple and $\End[\calB]{\calM}=K$ or equivalently if $\calM\otimes_K \overline{K}$ is simple for the algebraic closure $\overline{K}\supseteq K$ .\par%
\eqref{equ_def_countfct_ordinary} defines functions $\countfct[ss]{,\calA}{d}, \countfct[sim]{,\calA}{d}, \countfct[absim]{,\calA}{d}$ on all $q$-powers. We call these \mbox{functions} counting functions, as they count the semisimple, simple and \mbox{absolutely} simple \mbox{modules/representations} of $\calA$ over $\bbF_q$ up to isomorphism. If the algebra $\calA$ is \mbox{understood,} we will usually drop it from the notation.\par%
The counting functions \eqref{equ_def_countfct_ordinary} have been studied by S. Mozgovoy and M. Reineke in the cases $\calA=\bbF_q\vec{Q}$ the path algebra of a finite quiver and $\calA=\bbF_q[F_a]$ the group algebra of a finitely generated free group. One of their main results is the following theorem.\footnote{In the quiver case Mozgovoy-Reineke's result was in the more general context of counting \mbox{absolutely} stable representation of a fixed dimension vector. We state it in a weaker form here for expository purposes.}
\begin{thm}\footnote{see \cite[Thm. 6.2]{xxreinekecounting} and \cite[Thm. 1.1]{xxMozgovoyReineke2015}}\thmabsatz
\label{thm_state_of_the_art}
If $\calA$ is the path algebra of a finite quiver or the group algebra of a finitely generated free group (over $\bbF_q$ respectively), then there are polynomials \mbox{$\countpol[ss]{}{d}, \countpol[absim]{}{d}\in\bbZ[s]$} and $\countpol[sim]{}{d}\in\bbQ[s]$ fulfilling
\begin{equation}
\label{equ_state_of_the_art}
\fa \alpha\geq 1: 
\countpol[absim]{}{d}(q^\alpha) = \countfct[absim]{}{d}(q^\alpha)
\; , \;
\countpol[sim]{}{d}(q^\alpha) = \countfct[sim]{}{d}(q^\alpha)
\; , \;
\countpol[ss]{}{d}(q^\alpha) = \countfct[ss]{}{d}(q^\alpha)
\end{equation}
\end{thm}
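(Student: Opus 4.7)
The plan is to establish polynomiality first for the absolutely simple count $\countfct[absim]{}{d}$, which admits a clean geometric interpretation as a point count on the stable locus of the moduli space of $d$-dimensional representations, and then to derive the simple and semisimple counts by formal inversion. Concretely, an absolutely simple representation over $\bbF_{q^\alpha}$ corresponds to a geometrically stable point of the representation variety with trivial $\PGL{d}$-stabiliser, so the absolutely simple locus is a $\PGL{d}$-torsor over its good quotient and
\[
\countfct[absim]{}{d}(q^\alpha)\;=\;\frac{|\mathrm{Rep}^{\mathrm{abs.s}}_d(\calA)(\bbF_{q^\alpha})|}{|\PGL{d}(\bbF_{q^\alpha})|}.
\]

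For the path algebra $\calA=\bbF_q\vec Q$ the numerator is accessible via Reineke's Hall-algebra recursion: the motivic class of the semistable locus in each dimension vector is computed by the Harder--Narasimhan formalism in the Hall algebra of $\vec Q$, and the absolutely simple locus is extracted from it by a further M\"obius inversion over Jordan--H\"older types, yielding a polynomial in $q^\alpha$ with integer coefficients. For the group algebra $\calA=\bbF_q[F_a]$ the representation variety is simply $\GL{d}^a$; the absolutely simple locus is the open complement of the union of loci where the $a$-tuple preserves a proper subspace. Counting this complement by inclusion--exclusion over the lattice of flag dimensions, combined with the known polynomiality of $|\GL{d}(\bbF_q)|$ and $|\PGL{d}(\bbF_q)|$ in $q$, produces the required polynomial $\countpol[absim]{}{d}\in\bbZ[s]$.

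The remaining counts follow formally. Galois descent realises a simple $\bbF_{q^\alpha}$-representation of dimension $d$ as a Frobenius orbit of size $n\mid d$ of absolutely simple $\overline{\bbF_{q^\alpha}}$-representations of dimension $d/n$, and the standard M\"obius inversion over the divisor lattice yields
\[
\countpol[sim]{}{d}(s)\;=\;\sum_{n\mid d}\tfrac{1}{n}\sum_{m\mid n}\mu\bigl(\tfrac{n}{m}\bigr)\,\countpol[absim]{}{d/n}(s^m)\;\in\;\bbQ[s],
\]
with the $1/n$ factors accounting for the rational coefficients. Finally the Krull--Schmidt decomposition of semisimples into simples gives the plethystic identity $\sum_{d\geq 0}\countfct[ss]{}{d}(q^\alpha)\,s^d = \Exp\bigl(\sum_{d\geq 1}\countfct[sim]{}{d}(q^\alpha)\,s^d\bigr)$, whose $\Log$-inversion produces $\countpol[ss]{}{d}\in\bbZ[s]$. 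I expect the main obstacle to be the inclusion--exclusion (respectively Harder--Narasimhan) step locating the absolutely simple locus inside the full representation variety: in the free-group case no natural stability condition is available, so one must stratify by Jordan--H\"older types directly and invert a Hall-algebra-style recursion, which is the technical heart of the Mozgovoy--Reineke argument.
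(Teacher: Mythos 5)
Your overall architecture --- counting absolutely simples as $\#\operatorname{Rep}_d^{\text{absim}}(\bbF_{q^\alpha})/\#\PGL{d}(\bbF_{q^\alpha})$ (free action plus Lang's theorem), recovering simples by Galois descent and M\"obius inversion, and recovering semisimples by a plethystic exponential --- is indeed the skeleton of the cited Reineke/Mozgovoy--Reineke proofs and of this paper's generalization in Section \ref{section_5_counting_polynomials}: your descent formula is exactly \eqref{equ_identities_Galois_und_Schur} summed over $c$, and your $\Exp$-identity is the counting-function form of Lemma \ref{lem_Exp(absim)_ss_kleinr}. The genuine gap is the step you defer: extracting the (absolutely) simple count from the easily counted full representation space. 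For the free group, inclusion--exclusion ``over the lattice of flag dimensions'' does not work: a representation may preserve many subspaces of a given dimension, so the loci indexed by flag types overlap with multiplicities given by the numbers of invariant filtrations (the Hall numbers $F_{\calM,\calN}^{\calW}$), not with signs $\pm 1$, and there is no finite inclusion--exclusion indexed by dimension data alone. The correct device is the Hall-algebra inversion: one inverts $\varepsilon=\sum_{[\calM]}[\calM]$ in the completed Hall algebra, whose inverse has the nontrivial coefficients \eqref{equ_coeff_epsilon_inverse} (signs and powers of $\#\End[\calA]{\calL}$), pushes the identity through the integral \eqref{equ_Hall_algebra_integral} and takes $\log$, which is precisely Lemma \ref{lem_logint}. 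You concede at the end that one must ``invert a Hall-algebra-style recursion'', but that inversion is the technical heart of the theorem and your proposal neither states nor proves it; likewise Reineke's quiver-case argument is this inversion (combined with the Harder--Narasimhan recursion for nontrivial stability), not a ``M\"obius inversion over Jordan--H\"older types''.

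A second gap: polynomiality and integrality are asserted rather than proved. For each fixed $\alpha$ any such recursion only produces numbers; the nontrivial point is that a single polynomial, independent of $\alpha$, interpolates all values $\countfct[absim]{}{d}(q^\alpha)$. In the proof of Theorem \ref{MAIN THEOREM} this is achieved by comparing the two expansions $\Lambda_m$ and $\lambda_m$ and inducting on $\operatorname{gcd}(m)$, which is what disentangles the denominators $\delta(1-q^{\delta})$ arising from Lemma \ref{lem_logint}; integrality ($\bbZ[s]$ rather than $\bbQ[s]$) then comes from the moduli-space interpretation \eqref{equ_counting_refined_moduli_spaces} together with fact (i) of Subsection \ref{subsec_geometric_methods}, not from the polynomiality of $\#\GL{d}(\bbF_q)$ alone. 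Your purely formal steps --- the formula for $\countpol[sim]{}{d}\in\bbQ[s]$ and the semisimple generating identity --- are fine, but without the Hall-algebra inversion and the interpolation/integrality arguments the proof is not complete.
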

We call such polynomials realizing the counting functions \eqref{equ_def_countfct_ordinary} \defit{counting \mbox{polynomials.}} More important than the mere existence of counting polynomials is the fact that Mozgovoy-Reineke obtained certain generating formulas which enable the practical computation of the counting polynomials (in low dimensions).\par%
The main goal of this paper is to generalize Theorem \ref{thm_state_of_the_art} as well as the above mentioned generating formulas to the case where $\calA = \bbF_q[\calG]$ is the group algebra of a finitely generated virtually free group (see Theorem \ref{MAIN THEOREM} below). Furthermore we will investigate a few structural properties of the counting polynomials and relate these to the geometry of GIT moduli spaces of representations. SageMath code designed by the author for the practical computation of the counting polynomials is provided as an ancillary file to this publication on the arXiv.\par%
This paper is organized as follows: We start by recalling most of the relevant \mbox{preknowledge} on virtually free groups and algebraic geometry within Section \ref{section_preliminaries}. \mbox{Afterwards} we discuss some invariants like dimension vectors and the \mbox{homological} Euler form in the context of representations of virtually free groups in Section \ref{section_3_some_invariants_of_virtually_free_groups}. In Section \ref{section_4_hall_algebra_methods} we review some Hall algebra methods, before we discuss the main result \ref{MAIN THEOREM} in Section \ref{section_5_counting_polynomials}. Section \ref{section_examples} is devoted to hands-on examples and we conclude in Section \ref{section_structural_properties} with discussing a few structural properties of the counting polynomials.\par%
\subsection*{Acknowledgements}
Almost all results of this paper have been obtained as part of the author's PhD under the supervision of M. Reineke and B. Klopsch and the author is deeply indebted to his supervisors for all their help, support, kindness and patience. Special thanks also go to L. Hennecart, S. Mozgovoy and S. Schröer for useful discussions on parts of the content of this paper. Moreover the author thanks the Ruhr-University Bochum and most of its members for being a welcoming scientific home during his PhD. During his PhD the author's research was financed by the research training group \emph{GRK 2240: Algebro-Geometric Methods in Algebra, Arithmetic and Topology}, which is funded by the Deutsche Forschungsgemeinschaft.
\section{Preliminaries}
\label{section_preliminaries}
\noindent In this section we summarize the preknowledge from group theory, representation theory and algebraic geometry needed within this paper. Except for the notion of suitable fields nothing in this section is original.
\subsection{Virtually free groups and their group algebras}
\label{subsec_virtually_free_groups_and_their_group_algebras}
We start by recalling some group theoretic notions. Given three groups $\calE,\calF,\calH$ as well as two injective group homomorphisms $\iota:\calF \hookrightarrow \calH$ , $\kappa: \calF\hookrightarrow \calE$ we denote their pushout in the category of groups by $\calH *_\calF \calE$ and call it the \defit{amalgamated free product} of $\calH$ and $\calE$ over $\calF$ . If we are given two embeddings $\iota,\kappa:\calF\hookrightarrow \calH$ with the same codomain, we consider the induced embeddings
\[
\iota',\kappa' :\calF \hookrightarrow \calH * C_\infty \quad, \quad \iota'(f):= \iota(f) \quad ,\quad  \kappa'(f):=t\inv\kappa(f)t
\]
where $t$ denotes the generator of the infinite cyclic group $C_\infty$ . We denote the coequalizer of
\[
\begin{tikzcd}
\calF \arrow[r,hook,shift left=0.5ex, "{\iota'}"] \arrow[r,hook',shift right=0.5ex,"\kappa'"'] & \calH * C_\infty
\end{tikzcd}
\]
by $\HNN{\calH}{\calF}{\iota,\kappa}$ and call it the \defit{HNN extension} of $\calH$ by $\calF$ . Even though our description of amalgamated free products and HNN extensions make sense if the homomorphisms $\iota$ and $\kappa$ are not injective, we will follow the usual convention in group theory and only consider the case where they are.\par%
A group $\calG$ is called \defit{virtually free} if it contains a finite index subgroup $\calG$ which itself is a free group. However, Bass-Serre theory provides an equivalent definition of virtually free groups which will be more useful for us:\footnote{see \cite[§II.2.6]{xxtrees}, \cite[Thm. 1]{xxkarrass73}} A finitely generated group $\calG$ is virtually free if and only if it is of the form
\begin{equation}
\label{equ_virt_free_decomp}
\calG \cong \left( \ldots \left(\left( \ldots \left(\left(\calG_0 *_{\calG_1'} \calG_1 \right)*_{\calG_2'} \calG_2\right) \ldots \right) \HNN{}{\calG_{I+1}'}{\iota_{I+1},\kappa_{I+1}} \right) \ldots \right) \HNN{}{\calG_{I+J}'}{\iota_{I+J},\kappa_{I+J}}
\end{equation}
for finite groups $\calG_0,\ldots ,\calG_I$ and $\calG_{1}' ,\ldots ,\calG_{I+J}'$ . Note that $I$ denotes the number of \mbox{amalgamated} free products and $J$ the number of HNN extensions in \eqref{equ_virt_free_decomp}. Each of the finite groups $\calG_i$ and $\calG_j'$ is embedded into $\calG$ as a subgroup in a canonical way. We may always assume that there are maps \mbox{$s,t:\{1,\ldots ,I+J\}\to \{0,\ldots ,I\}$} fulfilling
\begin{equation}
\label{equ_virt_free_maps1_s_t}
t(j) = 
\begin{cases}
j &, \quad \text{if } j\leq I
\\
s(j) &. \quad \text{if } j>I
\end{cases}
\quad ,\quad
s(j) \in\{0,\ldots,j-1\} \quad \text{if } j\leq j
\end{equation}
such that $\calG_j'$ is contained in $\calG_{s(j)}$ and $\calG_{t(j)}$ for each $1\leq j\leq I+J$ and denote the inclusion maps by
\begin{equation}
\label{equ_virt_free_maps2_s_t}
\calG_{s(j)} \overset{\iota_j}{\hookleftarrow} \calG_j' \overset{\kappa_j}{\hookrightarrow} \calG_{t(j)}
\end{equation}
Under these assumptions the data $((\calG_i)_i, (\calG_j')_j, (\iota_j)_j,(\kappa_j)_j)$ constitute a finite graph of groups $\calG_Q$ with vertex groups $(\calG_i)_i$ and edge groups $(\calG_j')_j$ in the sense of \mbox{Bass-Serre} theory. \eqref{equ_virt_free_decomp} says that $\calG$ is isomorphic to the fundamental group $\pi_1(\calG_Q)$ of $\calG_Q$ . For every $\calG$ that arises from finite groups like in \eqref{equ_virt_free_decomp} it is possible to choose alternative embeddings of $\calG_1',\ldots ,\calG_{I+J}'$ into $\calG$ such that there are maps $s,t$ satisfying \eqref{equ_virt_free_maps1_s_t} and \eqref{equ_virt_free_maps2_s_t}, because every finite subgroup of $\calG$ is up to conjugation contained in one of the $\calG_i$ , $0\leq i\leq I$ $-$ more precisely every finite subgroup of an amalgamated free product $\calH *_\calF \calE$ is conjugated to a subgroup of $\calH$ or $\calE$ \footnote{see \cite[§I.4.3, Cor.]{xxtrees}} and every finite subgroup of an HNN extension $\HNN{\calH}{\calF}{\iota,\kappa}$ is conjugated to a subgroup of $\calH$ .\footnote{This is a consequence of \cite[Thm. 4]{xxsubgroupsHNN}.}\par%
Throughout this paper we fix a finitely generated virtually free group $\calG$ , a \mbox{decomposition} \eqref{equ_virt_free_decomp} and maps $s,t$ satisfying \eqref{equ_virt_free_maps1_s_t} and \eqref{equ_virt_free_maps2_s_t}. So the readers may for simplicity use as a definition: A finitely generated group is virtually free if and only if it is of the form \eqref{equ_virt_free_decomp} for finite groups $\calG_i$ , $\calG_j'$ and inclusions \eqref{equ_virt_free_maps2_s_t} among them.\par%
Although the general description \eqref{equ_virt_free_decomp} of our virtually free group $\calG$ might look \mbox{intimidating}, the examples we want to keep in mind are quite down-to-earth. \mbox{Trivially} every finite group is virtually free. Some of the easiest non-trivial examples are the groups $\Gamma_{a,b}:= C_a*C_b$ which by definition are free products of finite cyclic groups $C_a$ and $C_b$ . Prominent examples of this class are the infinite dihedral group $\bbD_\infty =\Gamma_{2,2}$ and $\Gamma_{2,3}$ which is isomorphic to $\mathbf{PSL}_2(\bbZ)$ . We may enlarge this class of examples by picking a common divisor $c$ of $a$ and $b$ as well as embeddings $C_c\hookrightarrow C_a,C_b$ . This gives rise to the virtually free group $C_a *_{C_c} C_b$ . A prominent example here is $C_4*_{C_2} C_6$ which is isomorphic to $\mathbf{SL}_2(\bbZ)$ .\par%
The arithmetic groups $\GL{2}(\bbZ)$ and $\mathbf{PGL}_2(\bbZ)$ are virtually free as well $-$ they arise as $\bbD_4 *_{C_2\times C_2} \bbD_6$ and $\bbD_2 *_{C_2} \bbD_3$ . To define the inclusions of $C_2\times C_2$ and $C_2$ we consider the presentation
\[
\bbD_c = \langle s,t\mid s^2=t^2=1=(st)^c\rangle
\]
of the dihedral group. If $c=2a$ is even, we embed $C_2\times C_2$ into $\bbD_{2a}$ by sending the 2 generators to $s$ and $(st)^a$ . For arbitrary $c$ we embed $C_2$ into $\bbD_c$ by sending the generator to $s$ .\par%
Since every finite index subgroup of a virtually free group is again virtually free, all congruence subgroups of the four above mentioned arithmetic groups are virtually free as well. Another class of examples are of course the free groups: The free group $F_a$ on $a$ generators arises by taking $J=a$ trivial HNN extensions of the trivial group, i.e. in terms of our description \eqref{equ_virt_free_decomp} set $I=0$ and all $\calG_i$ , $\calG_j'$ to be the trivial group.\par%
In \cite[§2]{xxqurves} L. Le Bruyn discusses an analogue of graphs of groups for algebras: For $\calA,\calC$ two $K$-algebras and $\iota,\kappa:\calC\hookrightarrow \calA$ injective $K$-algebra homomorphisms we consider the induced embeddings
\[
\iota',\kappa' :\calC \hookrightarrow \calA *_K K[t,t\inv] \quad, \quad \iota'(f):= \iota(f) \quad ,\quad  \kappa'(f):=t\inv\kappa(f)t
\]
where $*_K$ denotes the coproduct of $K$-algebras. We define the \defit{HNN extension} $\HNN{\calA}{\calC}{\iota,\kappa}$ of $\calA$ by $\calC$ as the coequalizer of
\[
\begin{tikzcd}
\calC \arrow[r,hook,shift left=0.5ex, "{\iota'}"] \arrow[r,hook',shift right=0.5ex,"\kappa'"'] & \calA *_K K[t,t\inv]
\end{tikzcd}
\]
We are mostly interested in HNN extensions of algebras, because they arise as group algebras of HNN extensions of groups: The group algebra functor $K[-]$ is a left adjoint, hence, it preserves colimits. So for every HNN extension of groups we obtain an isomorphism $K[\HNN{\calH}{\calF}{\iota,\kappa}]\cong \HNN{K[\calH]}{K[\calF]}{\iota,\kappa}$ .\footnote{We denote the induced algebra homomorphisms $K[\iota],K[\kappa]: K[\calF]\to K[\calH]$ simply by $\iota$ and $\kappa$ .} Moreover applying the functor $K[-]$ to our decomposition \eqref{equ_virt_free_decomp} we get a $K$-algebra isomorphism between $K[\calG]$ and
\begin{equation}
\label{equ_decomp_groupalg}
\left( \ldots \left(\left( \ldots \left(\left(K[\calG_0] *_{K[\calG_1']} K[\calG_1] \right)*_{K[\calG_2']} K[\calG_2]\right) \ldots \right) \HNN{}{K[\calG_{I+1}']}{\iota_{I+1},\kappa_{I+1}} \right) \ldots \right) \HNN{}{K[\calG_{I+J}']}{\iota_{I+J},\kappa_{I+J}}
\end{equation}
Analogous to \cite[§2]{xxqurves} we say that a $K$-algebra $\calA$ is the \defit{fundamental algebra of a finite graph of finite dimensional semisimple $K$-algebras} if there are $I,J\in\bbN_0$ , maps $s,t:\{1,\ldots ,I+J\}\to \{0,\ldots ,I\}$ fulfilling \eqref{equ_virt_free_maps1_s_t} as well as finite \mbox{dimensional} \mbox{semisimple} \mbox{$K$-algebras} $\calA_0, \ldots ,\calA_I$ and $\calA_1',\ldots ,\calA_{I+J}'$ and $K$-algebra \mbox{embeddings} \mbox{$\iota_j: \calA_j' \hookrightarrow \calA_{s(j)}$ ,} $\kappa_j: \calA_j' {\hookrightarrow} \calA_{t(j)}$ such that $\calA$ is isomorphic to
\begin{equation}
\label{equ_fundamental_algebra_of_semisimples}
\left( \ldots \left(\left( \ldots \left(\left(\calA_0 *_{\calA_1'} \calA_1 \right)*_{\calA_2'} \calA_2\right) \ldots \right) \HNN{}{\calA_{I+1}'}{\iota_{I+1},\kappa_{I+1}} \right) \ldots \right) \HNN{}{\calA_{I+J}'}{\iota_{I+J},\kappa_{I+J}}
\end{equation}
The group algebra $K[\calG]$ of the finitely generated virtually free group $\calG$ is the fundamental algebra of a finite graph of finite \mbox{dimensional} semisimple $K$-algebras whenever $\chara{K}$ is not a prime number dividing the order of one of the finite groups $\calG_i$ , $0\leq i\leq I$ , since $K[\calG]$ is isomorphic to \eqref{equ_decomp_groupalg}.\par%
The main reason we are studying virtually free groups in this paper is the following fact:\footnote{see \cite[Thm. 1]{xxhereditarydicks}} If $\calH$ is a finitely generated group and $K$ a field, then the group algebra $K[\calH]$ is (left) hereditary\footnote{Since every group is isomorphic to its opposite, left and right hereditaryness are equivalent.} if and only if $\calH$ is virtually free and contains no elements of order $\chara{K}$ . Using the decomposition \eqref{equ_virt_free_decomp} one can show that $K[\calG]$ is hereditary if and only if $\chara{K}$ does not divide the orders of the groups $\calG_0,\ldots ,\calG_I$ , because every finite subgroup $\calF\subseteq \calG$ (i.e. in particular for $\calF$ cyclic of prime order) is conjugated to a subgroup of one of the groups $(\calG_i)_i$ .\par%
Recall that a $K$-algebra $\calA$ is called \defit{formally smooth} if its Hochschild \mbox{cohomology} $H\!H^a(\calA,-)$ vanishes in degree $a\geq 2$ . This is equivalent to $\calA$ satisfying a \mbox{lifting} property along square-zero extensions of $K$-algebras.\footnote{see \cite[Prop. 9.3.3]{xxweibel}} Every formally smooth \mbox{$K$-algebra} is left and right hereditary\footnote{use e.g. \cite[Lemma 9.1.9]{xxweibel}} and the fundamental algebra of a finite graph of finite dimensional semisimple $K$-algebras is formally smooth.\footnote{see \cite[Thm. 1]{xxqurves}} So a group algebra $K[\calH]$ of a finitely generated group $\calH$ is formally smooth if and only if it is hereditary, i.e. if and only if $\calH$ is virtually free and contains no elements of order $\chara{K}$ .\par%
For (parts of) the machinery of this paper to work it is crucial that $K[\calG]$ is formally smooth, i.e. $\chara{K}$ has to be zero or a suitable prime. To make things more convenient we will moreover assume that $K$ is large enough which brings us to the notion of suitable fields:\par%
Let $\calC$ be a finite dimensional semisimple $K$-algebra.\footnote{We are mostly interested in the case $\calC = K[\calF]$ for $\calF$ a finite group of order coprime to \mbox{$\chara{K}$ .}} By Artin-Wedderburn theory we know that $\calC$ is (isomorphic to) a product of matrix algebras
\begin{equation*}
\bfM_{\delta_1}(\calD_{1}) \times \ldots \times \bfM_{\delta_c}(\calD_{c})
\end{equation*}
with $\calD_1,\ldots ,\calD_c$ finite dimensional $K$-division algebras. We say that $\calC$ is \defit{completely split} if all simple left $\calC$-modules are absolutely simple or {equivalently} if $\calD_\gamma\cong K$ for all $1\leq \gamma\leq c$ , i.e. $\calC$ is completely split if and only if it is of the form
\begin{equation}
\label{equ_normalform_semisimple}
K^{c_1} \times \bfM_2(K)^{c_2} \times \ldots \times \bfM_e(K)^{c_e}
\end{equation}
for non-negative integers $e,c_\epsilon\in \bbN_0$ . Note that all left $\calC \otimes_K F$-modules for every field extension $F\supseteq K$ are defined over $\calC$ . We say that a field $K$ is of \defit{\mbox{suitable} \mbox{characteristic}} for the virtually free group $\calG$ if $\calG$ contains no \mbox{elements} of order $\chara{K}$ . We call a field $K$ \defit{suitable} for $\calG$ if it is perfect, of suitable \mbox{characteristic} and $K[\calF]$ is completely split for every finite subgroup $\calF\subseteq \calG$ .\par%
Note that being suitable is a relative notion $-$ it depends on which virtually free group it refers to. The readers may convince themselves that every algebraic field extension of a suitable field is again suitable and (using that every finite subgroup of $\calG$ is contained in a $\calG_i$ up to conjugation) that every perfect field of suitable characteristic admits a finite extension that is suitable.\par%
Within this paper we will study representations of finitely generated virtually free groups over suitable finite fields. However, all of our methods apply to the more general case of representations of the fundamental algebra \eqref{equ_fundamental_algebra_of_semisimples} of a finite graph of finite dimensional semisimple $\bbF_q$-algebras under the assumption that each of the semisimple algebras $\calA_i$ and $\calA_j'$ are {completely split}.
\subsection{Geometric methods}
\label{subsec_geometric_methods}
The algebro-geometric methods in this paper are \mbox{written} in the language of schemes. Since we will work almost entirely with (affine) schemes of finite type over a perfect field $K$ , those readers who are less \mbox{comfortable} with schemes may instead think of the associated $\overline{K}$-varieties and see $K$-valued points as fixed points of the natural Galois action of $\Aut[K]{\overline{K}}$ , connected/irreducible \mbox{components} as orbits of the natural Galois action on the connected/irreducible \mbox{components,} etc. For this whole subsection fix a field $K$ .
\fakesubsubsection{Counting rational points}
Let $C$ be a commutative ring and $X$ be a $C$-scheme. For each commutative \mbox{$C$-algebra} $B$ , we will denote by $X(B)$ the set of \defit{$B$-valued points} of $X$ , i.e. the set of $C$-scheme morphisms $\spec{B}\to X$ . If $C=K$ is a field, we also use the term rational points for the \mbox{$K$-valued} points $X(K)$ .\par%
Now assume $C$ is of finite type over $\bbZ$ and $X$ is separated and of finite type over $C$ . A polynomial $P\in\bbZ[s]$ is called \defit{counting polynomial} of $X$ if for every ring homomorphism $C\to \bbF_q$ to a finite field $\#X(\bbF_q)=P(q)$ . We say that $X$ is \defit{polynomial count} if $X$ admits a counting polynomial.
\begin{bsp}\thmabsatz
The general linear group (scheme) $\GL{d}$ is polynomial count, its counting polynomial is given by $\countgl[d] := \prod_{\delta=0}^{d-1} (s^d -s^\delta)$ .
\end{bsp}
Note that counting polynomials are unique and that the reduction $X_{\text{red}}$ of a \mbox{polynomial} count scheme $X$ is again polynomial count with the same counting \mbox{polynomial.} We will need the following two facts on polynomial count schemes:
\begin{compactenum}[(i)]
\item If $P\in \bbQ(s)$ is a rational function and $\#X(\bbF_q)=P(q)$ for each homomorphism $C\to \bbF_q$ , then $P$ lies in the subring $\bbZ[s]$ (and is a counting polynomial of $X$ ).
\item If $C$ is a subring of $\bbC$ and $X$ is a polynomial count $C$-scheme with counting polynomial $P$ , then $P(xy)\in \bbZ[x,y]$ is the E-polynomial\footnote{For the definition of E-polynomials see e.g. \cite[Appendix]{xxhauselRodriguez}. {The notion of E-polynomials only occurs as an application/motivation, readers only interested in the counting of representations over finite fields may ignore it.}} of the analytification $X(\bbC)= (X\times_C \spec{\bbC})^{\text{an}}$ and $P(1)$ is the Euler characteristic of $X(\bbC)$ .
\end{compactenum}
For proofs of the above facts see e.g. \cite[Appendix]{xxhauselRodriguez} and \cite[§6]{xxreinekecounting}.
\fakesubsubsection{Geometry of representation spaces}
The most important schemes we discuss in this paper arise from the \mbox{representation} spaces of algebras, whose construction we will now recall. Recall that the \mbox{functor} $\bfM_d: \katcalg{K} \to \katalg{K}$ which sends $C$ to the matrix algebra $\bfM_d(C)$ is a right \mbox{adjoint.} We denote its left \mbox{adjoint} by $\calR_d$ . For $\calA$ a finite type $K$-algebra we call \mbox{$\operatorname{Rep}_d(\calA):=\spec{\calR_d(\calA)}$} the \defit{$d$-th representation space} of $\calA$ . $\operatorname{Rep}_d(\calA)$ is an affine finite type $K$-scheme \mbox{admitting} a natural bijection
\begin{equation}
\label{equ_functor_of_pts_Repspace}
\operatorname{Rep}_d(\calA)(C) \cong \moralg{K}{\calA,\bfM_d(C)}
\end{equation}
for every commutative $K$-algebra $C$ . Denote the image of $x\in\operatorname{Rep}_d(\calA)(C)$ under \eqref{equ_functor_of_pts_Repspace} by $\rho_x$ . The right hand side of \eqref{equ_functor_of_pts_Repspace} admits a natural $\GL{d}(C)$-action via conjugation for each $C$ , hence, the general linear group (scheme) $\GL{d,K}$ acts on $\operatorname{Rep}_d(\calA)$ in terms of a $K$-scheme morphism $\sigma: \GL{d,K} \times_K \operatorname{Rep}_d(\calA) \to \operatorname{Rep}_d(\calA)$ .\par%
For a group scheme action we have two notions of the orbit of a point: If $x$ is a $C$-valued point of $\operatorname{Rep}_d(\calA)$ , then we have the orbit $\GL{d}(C).x\subseteq \operatorname{Rep}_d(\calA)(C)$ $-$ which we call the \defit{set-theoretic orbit} of $x$ $-$ as well as the \defit{algebro-geometric orbit} \mbox{$\orbit{x}$ .} The latter is defined as the image of the orbit map
\[
\orbitmap{x} := (\sigma\circ(\id[{\GL{d,K}}]\times x),\pr_2): \GL{d,K} \times_K \spec{C} \to X\times_K\spec{C}
\]
{If $x\in\operatorname{Rep}_d(\calA)(F)$ is an $F$-valued point for $F\supseteq K$ a finite field extension, then $\orbit{x}\subseteq\operatorname{Rep}_d(\calA)$ is locally closed and we may consider it as a reduced \mbox{locally} closed \mbox{subscheme.}} The two notions of orbits are closely related to each other: If $x\in \operatorname{Rep}_d(\calA)(K)$ is a \mbox{$K$-valued} point, $F\supseteq K$ a field extension and $x'$ the \mbox{$F$-valued} point \mbox{associated} to $x$ via pulling it back along $\spec{F}\to \spec{K}$ , then \mbox{$\orbit{x}(F)=\GL{d}(F).x'$ .}\footnote{In general the inclusion $\subseteq$ is wrong for group scheme actions. For representation spaces it holds, because representations of an algebra have no \textit{twisted forms}, i.e. if $\calM,\calN$ are left $\calA$-modules and 
$\calM\otimes_K F\cong \calN\otimes_K F$ for some field extension $F\supseteq K$ , then $\calM$ and $\calN$ are already isomorphic.}\par%
The purpose of the action on $\operatorname{Rep}_d(\calA)$ is that $K$-valued points $x,y\in \operatorname{Rep}_d(\calA)(K)$ are in the same (set-theoretic) orbit if and only if the representations $\rho_x$ and $\rho_y$ are isomorphic, i.e. there is a natural bijection $\iso[d](\calA) \cong \nicefrac{\operatorname{Rep}_d(\calA)(K)}{\GL{d}(K)}$ .\par%
Within this paper we will usually not distinguish strictly between the set-theoretic orbit $\GL{d}(K).x$ of a $K$-valued point $x$ , the isomorphism class of the \mbox{corresponding} representation $\rho_x:\calA \to \bfM_d(K)$ and the isomorphism class of its associated left module, which we denote by $\calM_x$ . Given a left $\calA$-module $\calM$ we denote the \mbox{$K$-valued} point corresponding to $\calM$ under a given choice of basis by $x_\calM$ . If $\calM$ is the left \mbox{module} associated to the point $x$ , we will also sometimes denote the \mbox{algebro-geometric} orbit $\orbit{x}$ by $\orbit{\calM}$ .\par%
Similar to the algebro-geometric orbits one defines the \defit{stabilizer} $\stab{x}$ of a $\calC$-valued point $x\in \operatorname{Rep}_d(\calA)(\calC)$ geometrically as the fibre product defined by the pullback square
\[
\begin{xy}\xymatrix{
\stab{x} \ar[r]\ar[d] & \spec{C}\ar[d]^{(x,\id)}
\\
\GL{d,K} \times_K \spec{C} \ar[r]_{\orbitmap{x}} & \operatorname{Rep}_d(\calA) \times_K \spec{C}
}\end{xy}
\]
$\stab{x}$ is a closed $C$-subgroup scheme of $\GL{d,K} \times_K \spec{C}$ . Its $B$-valued points for any commutative $C$-algebra $B$ are given by
\begin{equation}
\label{equ_stabilizer_Aut_fctor_of_pts}
\stab{x}(B) \cong \{g\in \GL{d}(B) \mid g.x' = x'\} = \Aut[\calA\otimes_K B]{\calM_x\otimes_C B}
\end{equation}
where $x'\in\operatorname{Rep}_d(\calA)(B)$ is the $B$-valued point associated to $x$ . So $\stab{x}(B)$ is nothing but the (set-theoretic) stabilizer subgroup in $\GL{d}(B)$ of the point $x'$ .\par%
%
%
%
%
If $\varphi:\calA \to \calB$ is a homomorphism of finite type $K$-algebras, then functoriality gives us an induced $K$-scheme morphism $\operatorname{Rep}_d(\calB)\to\operatorname{Rep}_d(\calA)$ for each $d$ which realizes the restriction of scalars functor geometrically. We denote this morphism by $\varphi^*$ . In fact \ifthenelse{1=1}{$\operatorname{Rep}_d(-)$}{$\operatorname{Rep}_d(-): \katalg{K} \to \left(\katrelaffsch{K}\right)\op $} is a left adjoint functor from (finite type) $K$-algebras to the opposite category of affine (finite type) $K$-schemes, hence, it maps colimits of (finite type) $K$-algebras to limits of affine (finite type) $K$-schemes. For example $\operatorname{Rep}_d(\calA *_K\calB)\cong \operatorname{Rep}_d(\calA) \times_K \operatorname{Rep}_d(\calB)$ for $\calA,\calB$ two finite type $K$-algebras. Moreover \eqref{equ_functor_of_pts_Repspace} shows that
\begin{equation*}
\operatorname{Rep}_d(\calA\otimes_K F) \cong \operatorname{Rep}_d(\calA)\times_K \spec{F}
\end{equation*}
are naturally isomorphic $F$-schemes for all field extensions $F\supseteq K$ .\par%
The geometry of representation spaces and their orbits play a substantial role within this paper. We will frequently make use of the following fundamental facts:\footnote{See \cite[§2.3]{xxkirillov} for {(v) and (vi)} in the case $\calA=\bbC\vec{Q}$ the path algebra of a quiver. The proofs hold in our setting without significant change.}
\begin{compactenum}[(iii)]
\item If $\calA$ is formally smooth, then $\operatorname{Rep}_d(\calA)$ is {a regular scheme}.\footnote{$\operatorname{Rep}_d(\calA)$ is formally smooth over $K$ in the sense of \cite[Tag 02H0]{xxstacks} because of the lifting property of $\calA$ . So by \cite[Tags 02H6]{xxstacks} it is smooth over $K$ , hence, regular by \mbox{\cite[Tag 056S]{xxstacks}.}}
\item[(iv)] If $x\in\operatorname{Rep}_d(\calA)(F)$ is an $F$-valued point for a field extension $F\supseteq K$ , then $\orbit{x}$ is irreducible and in particular connected.\footnote{It is the continuous image of the general linear group $\GL{d,F}$ .}
\item[(v)] If $K$ is perfect and $0\to \calN\to \calW\to \calM\to 0$ a short exact sequence of finite dimensional left $\calA$-modules, then $\orbit{\calN\oplus\calM}\subseteq \overline{\orbit{\calW}}$ .
\item[(vi)] If $K$ is perfect, then an orbit $\orbit{\calM}\subseteq \operatorname{Rep}_d(\calA)$ is closed if and only if the corresponding left $\calA$-module $\calM$ is semisimple.
\end{compactenum}
\fakesubsubsection{E-polynomials of moduli spaces of representations}
\label{fakesubsub_E-polynomials}
Since the isomorphism classes of representations of $\calA$ are parametrized by orbits of representation spaces, it is natural to define moduli spaces of representations of $\calA$ in terms of quotients of representation space. We denote the \defit{GIT quotient}\footnote{see e.g. \cite[§1.2, Thm. 1.1]{xxGIT}} of $\operatorname{Rep}_d(\calA)$ by
\begin{equation*}
\modulirep[]{\calA}{d}:= \operatorname{Rep}_d(\calA)/\!\!/\GL{d,K} = \spec{\calR_d(\calA)^{\GL{d,K}}}
\end{equation*}
If the field $K$ is finite or algebraically closed,\footnote{More generally it would be sufficient to require that $K$ is perfect and has trivial Brauer group.} then there is a natural bijection \mbox{$\modulirep{\calA}{d}(F)\cong \ssimp[d](\calA\otimes_K F)$} for every algebraic field extension $F\supseteq K$ . So for such $K$ we call $\modulirep{\calA}{d}$ the \defit{(GIT) moduli space of $d$-dimensional semisimple \mbox{representations}} of $\calA$ . It contains a (possibly empty) open subscheme $\modulirep[absim]{\calA}{d}$ which for $K$ as above admits a natural bijection $\modulirep[absim]{\calA}{d}(F) \cong \absimp[d](\calA \otimes_K F)$ for every algebraic field extension $F\supseteq K$ . Accordingly we call $\modulirep[absim]{\calA}{d}$ the \defit{(GIT) moduli space of $d$-dimensional absolutely simple representations} of $\calA$ .\par%
Note that for $K=\bbF_q$ a finite field the counting functions $\countfct[ss]{,\calA}{d}$ and $\countfct[absim]{,\calA}{d}$ count the rational points of these GIT moduli spaces. So whenever counting polynomials $\countpol[ss]{}{d}$ and $\countpol[absim]{}{d}$ as in Theorem \ref{thm_state_of_the_art} exist, they are in fact counting polynomials of these GIT moduli spaces.\par%
Now assume that $\calA = \calA'\otimes \bbF_q := \calA'\otimes_\bbZ \bbF_q$ is defined over $\bbZ$ by a finite type $\bbZ$-algebra \mbox{$\calA'$ .}\footnote{For example $\calA'=\bbZ[\calG]$ for $\calG$ a finitely generated group or $\calA'=\bbZ\vec{Q}$ for $\vec{Q}$ a finite quiver.} One can define representation spaces and GIT moduli spaces of $\calA'$ as \mbox{$\bbZ$-schemes} using Seshadri's generalization of geometric invariant theory.\footnote{see \cite{xxseshadriGIT}} In this way we obtain $\bbZ$-schemes $\modulirep{\calA'}{d}$ and $\modulirep[absim]{\calA'}{d}$ such that for $F=\bbC$ and $F=\overline{\bbF_p}$ for all primes $p$ in an open subset of $\spec{\bbZ}$ we have\footnote{see \cite[Appendix B, Thm. B.3]{xxcrawleyVan}}
\begin{equation*}
\resizebox{15cm}{!}{$
\modulirep{\calA'}{d}\times \spec{F}\cong \modulirep{\calA'\otimes F}{d}
\; , \;
\modulirep[absim]{\calA'}{d}\times \spec{F}\cong \modulirep[absim]{\calA'\otimes F}{d}
$}
\end{equation*}
Hence, using {(i) from Subsection \ref{subsec_geometric_methods}} we see that if there are rational functions $\countpol[ss]{}{d}, \countpol[absim]{}{d}\in\bbQ(s)$ which satisfy \eqref{equ_state_of_the_art}, they must already belong to $\bbZ[s]$ . Furthermore using {(ii) from Subsection \ref{subsec_geometric_methods}} we see that whenever the counting polynomials exist, the E-polynomials of $\modulirep{\calA' \otimes \bbC}{d}^{\text{an}}$ and $\modulirep[absim]{\calA' \otimes \bbC}{d}^{\text{an}}$ are given by $\countpol[ss]{}{d}(xy)$ and $\countpol[absim]{}{d}(xy)$ .\par%
When $\calA'=\bbZ[\calG]$ is the group algebra of a finitely generated group $\calG$ , (the analytification of) the moduli space $\modulirep{\bbC[\calG]}{d}$ is also called the \defit{$\GL{d}(\bbC)$-character variety} of $\calG$ and denoted by $X_\calG(\GL{d}(\bbC))$ . Since our methods enable us to compute the counting polynomials explicitly (e.g. using the accompanying SageMath code), we obtain a new approach to determine the E-polynomials of $\GL{d}(\bbC)$-character varieties\footnote{In fact we can more generally compute the E-polynomials of the connected components of the character varieties individually.} of virtually free groups.\par%
\fakesubsubsection{Associated fibre spaces and special groups}
We now want to recall the construction of associated fibre spaces.\footnote{Most of the facts we are collecting here can e.g. be found in \cite{xxserreFibreBundles}.} Let $G$ be a linear algebraic group over $K$ , $H\subseteq G$ a closed subgroup and $X$ an affine $K$-scheme endowed with an $H$-action. We define an induced $H$-action on $G\times_K X$ via the natural transformation
\[
H(C)\times G(C)\times X(C)\to G(C)\times X(C) \quad,\quad h.(g,x):=(gh\inv, h.x)
\]
for any commutative $K$-algebra $C$ . This is a free action and its respective quotient\footnote{In the case of free actions we will denote the GIT quotient with a single $/$ instead of $/\!\!/$ .} \mbox{$G\times^H X:=\nicefrac{G\times_K X}{H}$} is called the \defit{associated $G$-fibre space.}\par%
If $(g,x)\in G(C)\times X(C)$ is a $C$-valued point, we denote its image in $(G\times^H X)(C)$ by $g*x$ . We have a natural morphism $X\to G\times^H X$ given by $x\mapsto 1*x$ . If $Y$ is a $K$-scheme with $G$-action and $\varphi:X\to Y$ is an $H$-equivariant morphism, then we obtain a unique $G$-equivariant morphism $\varphi'$ such that $\varphi$ factorizes as
\begin{equation}
\label{equ_extension_to_assoc_fibre}
X \to G\times^H X \overset{\varphi'}{\to} Y
\end{equation}
For $G$ and $H$ (geometrically) irreducible we have that $G\times^H X$ is irreducible/connected if and only if $X$ is irreducible/connected. Moreover we have the following useful lemma.
\begin{lem}\footnote{see \cite[§II.3.7, Lemma 4]{xxslodowy80}}\thmabsatz
\label{lem_slodowy_trick}
Let $\varphi: Y\to \nicefrac{G}{H}$ be a $G$-equivariant morphism. If $e\in \nicefrac{G}{H}(K)$ is the $K$-point lying under the unit of $G$ and $X:=\varphi\inv (e)$ with inclusion map $\iota: X\to Y$ , then the induced map $\iota': G\times^H X \to Y$ from \eqref{equ_extension_to_assoc_fibre} is a $G$-equivariant isomorphism.
\end{lem}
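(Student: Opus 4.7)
The plan is to exhibit an explicit inverse by identifying both $Y$ and $G\times^H X$ with the quotient of a common principal $H$-bundle. The first step is to verify that $X=\varphi^{-1}(e)$ is an $H$-stable subscheme of $Y$: since $\varphi$ is $G$-equivariant and the stabilizer of $e\in G/H$ under the left $G$-action is exactly $H$, restricting the $G$-action on $Y$ to $H$ preserves $X$. Hence the $H$-action on $G\times_K X$ defined by $h.(g,x)=(gh^{-1},h.x)$ makes sense, the map $G\times_K X\to Y$, $(g,x)\mapsto g.x$, is $H$-invariant, and so descends to the $G$-equivariant morphism $\iota': G\times^H X\to Y$ of \eqref{equ_extension_to_assoc_fibre}.

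Next I would realize both sides as quotients of the same $H$-torsor. The quotient map $\pi:G\to G/H$ is a principal $H$-bundle (for instance fppf-locally trivial), so its pullback $P:=G\times_{G/H}Y\to Y$ along $\varphi$ is again a principal $H$-bundle, with $Y\cong P/H$. On the other hand, consider the morphism
\[
\alpha:G\times_K X\longrightarrow P,\qquad (g,x)\longmapsto (g,g.x),
\]
which is well-defined since $\varphi(g.x)=g.\varphi(x)=g.e=\pi(g)$. Its inverse $(g,y)\mapsto (g,g^{-1}.y)$ is well-defined because $\varphi(g^{-1}.y)=g^{-1}.\pi(g)=e$ forces $g^{-1}.y\in X$. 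A direct check shows that $\alpha$ intertwines the twisted action $h.(g,x)=(gh^{-1},h.x)$ on $G\times_K X$ with the $H$-action $h.(g,y)=(gh^{-1},y)$ on $P$ coming from the pullback of the $H$-action on $G$. Therefore passing to quotients gives a canonical isomorphism $G\times^H X\cong P/H\cong Y$, and unwinding the construction one sees that this isomorphism agrees with $\iota'$.

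The main obstacle is a careful treatment of the quotients at the scheme level: one must check that $G\to G/H$ is indeed a principal $H$-bundle (so that the pullback $P\to Y$ inherits this property and one can freely identify $P/H$ with $Y$), and that the GIT quotient $G\times^H X$ of a free action agrees with the geometric quotient obtained from the $H$-torsor structure on $\alpha$. For $G$ a linear algebraic group and $H$ a closed subgroup over a field this is standard (and in many of our applications the bundle is even Zariski-locally trivial, e.g.\ when $H$ is a special group in the sense of Serre), so the remaining verifications reduce to the straightforward equivariance checks indicated above.
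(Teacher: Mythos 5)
Your argument is correct: untwisting via $(g,x)\mapsto (g,g.x)$ identifies $G\times_K X$ $H$-equivariantly with the pullback torsor $G\times_{\nicefrac{G}{H}}Y$ over $Y$, and passing to quotients recovers $\iota'$; the only point needing care, which you flag, is the compatibility of the free (GIT) quotient defining $G\times^H X$ with the torsor quotient, guaranteed because such quotient maps are principal fibre bundles (\cite[§0.4, Prop. 0.9]{xxGIT}). The paper itself gives no proof but cites \cite[§II.3.7, Lemma 4]{xxslodowy80}, and your untwisting argument is essentially the standard proof found there.
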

We also want to recall the notion of special algebraic groups. A linear algebraic group $G$ over $K$ is \defit{special} if the quotient map $\pi: X\to \nicefrac{X}{G}$ for any affine finite type $K$-scheme $X$ with a free $G$-action is Zariski-locally a trivial bundle, i.e. there is an open covering $\nicefrac{X}{G} = \bigcup_\alpha U_\alpha$ such that for each $\alpha$ there is a $G$-equivariant isomorphism $\Phi_\alpha$ admitting a commutative diagram\footnote{This is equivalent to the ordinary definition by \cite[§4.3, Théorème 2]{xxserreFibreBundles} and by the fact that all such quotient maps are principal fibre bundles (see \cite[§0.4, Prop. 0.9]{xxGIT}).}
\[
\begin{xy}\xymatrix{
\pi\inv(U_\alpha) \ar[rr]^{\Phi_\alpha} \ar[rd]_\pi && G\times_K U_\alpha\ar[ld]^{\pr_2}
\\
& U_\alpha
}\end{xy}
\]
\begin{bsp}\thmabsatz
\label{bsp_special_stabilizer}
Let $\calC$ be a finite dimensional semisimple $K$-algebra which is \mbox{completely} split, i.e. $\calC$ is of the form \eqref{equ_normalform_semisimple}. Denote its (absolutely) simple left modules by $\calL_0,\ldots ,\calL_{c-1}$ . Each finite dimensional left $\calC$-module $\calM$ is semisimple, i.e.
\[
\calM \cong \calL_0^{\oplus m(0)} \oplus \ldots \oplus \calL_{c-1}^{\oplus m(c-1)}
\]
for some $m\in{\bbN_0}^c$ . Using the bijection \eqref{equ_stabilizer_Aut_fctor_of_pts} and $\End[\calA]{\calL_\gamma}=K$ we obtain an isomorphism $\stab{x_\calM}(B)\cong \Aut[\calC\otimes_K B]{\calM\otimes_K B}\cong \GL{m(0)}(B) \times \ldots \times \GL{m(c_1)}(B)$ for every \mbox{commutative} $K$-algebra $B$ . Hence, $\stab{x_\calM} \cong \GL{m(0),K} \times \ldots \times \GL{m(c-1),K} $ which is a special linear algebraic group.\footnote{This can be seen e.g. from the classification of special groups in \cite{xxgrothendieckSpecial}.}
\end{bsp}
If $H$ is a special linear algebraic group acting freely on an affine finite type \mbox{$K$-scheme} $X$ with quotient $\nicefrac{X}{H}$ , then the canonical injection
\begin{equation}
\label{equ_special_bijection_rational_points}
\nicefrac{X(F)}{H(F)} \to \left(\nicefrac{X}{H}\right)(F)
\end{equation}
is bijective for every field extension $F\supseteq K$ . This in particular applies to the case where $\nicefrac{X}{H}$ is an associated fibre space $G\times^H X$ .
%
%
%
\section{Some invariants of virtually free groups}
\label{section_3_some_invariants_of_virtually_free_groups}
\noindent For the whole section fix a perfect field $K$ .
\subsection{Dimension vectors}
\label{subsec_dimension_vectors}
We will now associate to every finite type $K$-algebra $\calA$ a commutative monoid\footnote{i.e. a set $M$ with a binary operation $+:M\times M\to M$ which is associative, commutative and admits a neutral element $0$} $\calT(\calA)$ and a monoid homomorphism $\mongrad: \calT(\calA)\to \bbN_0$ which generalizes the dimension vector monoid from quiver representation theory.\footnote{see Example \ref{bsp_dimvec}(b) below} For $d\in\bbN_0$ we denote by $\calT_d(\calA)$ the set of connected components $Z\subseteq \operatorname{Rep}_d(\calA)$ containing a rational point, i.e. $Z(K)\neq \emptyset$ . As a set we define $\calT(\calA)$ as the disjoint union
\[
\calT(\calA):= \bigsqcup_{d\geq 0} \calT_d(\calA)
\]
and we define the map $\mongrad$ via $\vert \calT_d(\calA)\vert=d$ . To define the monoid structure on $\calT(\calA)$ we consider the direct sum map
\[
\oplus_{c,d} : \operatorname{Rep}_c(\calA)\times_K \operatorname{Rep}_d(\calA)\to \operatorname{Rep}_{c+d}(\calA)
\]
For $(Z,Z')\in \calT_c(\calA)\times \calT_d(\calA)$ the product $Z\times_K Z'$ is connected,\footnote{see \cite[Tags 0385 \& 04KV]{xxstacks}} hence, there is a unique connected component $Z+Z'\in\calT_{c+d}(\calA)$ containing $\oplus_{c,d}(Z\times_K Z')$ .
\par%
The monoid $\calT(\calA)$ has been studied in the past under other names like {component semigroup}.\footnote{see e.g. \cite[§4]{xxqurves}} Since we want to emphasize the analogy to dimension vectors, we will refer to it as the \defit{dimension vector monoid} of $\calA$ and call the elements of $\calT_d(\calA)$ \defit{dimension vectors of total dimension $d$ .} Usually we will think of dimension vectors as abstract monoid elements. Whenever we want to refer to the connected component (associated to) $m$ as a geometric object, we will denote it by $\operatorname{Rep}_m(\calA)$ .\par%
Since the orbit $\orbit{\calM}$ associated to a left $\calA$-module $\calM$ is connected, it belongs to a unique connected component. Denote the corresponding dimension vector by $\dimvec(\calM)$ .\par%
The dimension vector monoid $\calT(\calA)$ is contravariant functorial in $\calA$ : If \mbox{$\varphi:\calA\to \calB$} is a $K$-algebra homomorphism and $m\in\calT(\calB)$ a dimension vector, then denote by $\calT(\varphi)(m)$ the dimension vector associated to the connected component which contains the image of $\operatorname{Rep}_m(\calB)$ under $\varphi^*$ . This defines a monoid homomorphism \mbox{$\calT(\varphi):\calT(\calB)\to \calT(\calA)$ .}\par%
The homomorphism $\calT(\varphi)$ induces maps $\calT_d(\varphi):\calT_d(\calB)\to \calT_d(\calA)$ for all $d\in\bbN_0$ , because restriction of scalars preserves the vector space dimension of modules. Since every bijective monoid homomorphism is an isomorphism, $\calT(\varphi)$ is an isomorphism if and only if the map $\calT_d(\varphi)$ is bijective for every $d$ .
\begin{bsp}\mbo
\label{bsp_dimvec}
\startabc
\item Let $\calC$ be a finite dimensional semisimple $K$-algebra. We assume that $\calC$ is \mbox{completely} split, i.e. of the form \eqref{equ_normalform_semisimple}. All left $\calC\otimes_K F$-modules for every field extension $F\supseteq K$ are defined over $\calC$ . Therefore the (finitely many) orbits of the $K$-valued points cover $\operatorname{Rep}_d(\calC)$ and all of them are connected and closed by {(iv) and (vi) in Subsection \ref{subsec_geometric_methods}}. We deduce that the orbits of the $K$-valued points are nothing but the connected components and that the map \mbox{$\iso[d](\calC)\to \calT_d(\calC)$ ,} $[\calM]\mapsto \dimvec(\calM)$ is bijective for every $d\in\bbN_0$ . Since there is precisely one simple left $\calC$-module for every matrix algebra factor in \eqref{equ_normalform_semisimple}, we have established a monoid isomorphism
\begin{equation*}
\calT(\calC) \cong {\bbN_0}^{c_1 +\ldots + c_e}
\end{equation*}
In fact, $\calT(\calC)$ is nothing but the submonoid of the Grothendieck group $G_0(\calC)$ generated by the equivalence classes of the (absolutely) simples in this situation. If $m_\gamma\in {\bbN_0}^{c_1 +\ldots + c_e}$ is the $\gamma$-th standard basis vector for $0\leq \gamma < c_1 +\ldots + c_e$ , then $\vert m_\gamma\vert = c_\epsilon$ for the unique $1\leq \epsilon\leq e$ with \mbox{$c_1 + \ldots + c_{\epsilon-1} < \gamma \leq c_1 + \ldots + c_{\epsilon}$ .}
\item Let $K\vec{Q}$ be the path algebra of a finite quiver $\vec{Q}$ with vertex set {$\mathsf{v} \big(\vec{Q}\big)$} . Applying (a) to the subalgebra $\calC$ spanned by the paths of length zero, one can use Lemma \ref{lem_slodowy_trick} to establish a monoid isomorphism
\[
\calT \big(K\vec{Q}\big) \cong \calT(\calC)\cong {\bbN_0}^{\knotensub[\vec{Q}]}
\]
\stopabc
\end{bsp}
The following proposition gives a complete description of the dimension vector monoid of the group algebra $K[\calG]$ of a finitely generated virtually free group $\calG$ over a suitable field $K$ .
\begin{sa}\thmabsatz
\label{sa_dimvec_virt_free}
Let $\calA$ be a finite type $K$-algebra, $\calB$ and $\calC$ completely split finite dimensional semisimple $K$-algebras and $\varphi_1: \calC\to \calB$ , $\varphi_2,\varphi_3:\calC\to \calA$ $K$-algebra homomorphisms.
\startabc
\item Consider the $K$-algebra pushout $\calA *_\calC \calB$ given by $\varphi_1,\varphi_2$ . The commutative square
\begin{equation}
\label{equ_dimvec_pullback_square}
\begin{xy}
\xymatrix{
\calT(\calA*_\calC \calB) \ar[r] \ar[d] & \calT(\calB) \ar[d]^{\calT(\varphi_1)}
\\
\calT(\calA) \ar[r]_{\calT(\varphi_2)} & \calT(\calC)
}
\end{xy}
\end{equation}
is a {pullback square of commutative monoids}.
\item If $\iota_\calA:\calA \to \calA *_K K[t,t\inv]$ is the canonical $K$-algebra embedding {and\\ {$\Phi: \calA *_K K[t,t\inv] \to \calA *_K K[t,t\inv]$} the $K$-algebra automorphism \mbox{$\Phi(f):=t\inv f t$ ,}} then $\calT(\iota_\calA)$ is an isomorphism and $\calT(\Phi)=\id$ .
\item Consider the HNN extension $\HNN{\calA}{\calC}{\varphi_2,\varphi_3}$ given by $\varphi_2,\varphi_3$ . The diagram
\begin{equation}
\label{equ_dimvec_equalizer}
\calT(\HNN{\calA}{\calC}{\varphi_2,\varphi_3})\to \calT(\calA) \doppelpfeil{\calT(\varphi_2)}{\calT(\varphi_3)} \calT(\calC)
\end{equation}
is an equalizer diagram of commutative monoids.
\item If $\calG$ is the finitely generated virtually free group given by \eqref{equ_virt_free_decomp} and $K$ is a suitable field for $\calG$ , then $\calT(K[\calG])$ is given by
\begin{equation}  
\label{equ_dimvec_total}
\left\{
(m_i)_i \in \prod_{i = 0}^I \calT(K[\calG_i]) \mid \fa 1\leq j\leq I+J: \calT(\iota_{j})(m_{s(j)}) = \calT(\kappa_{j})(m_{t(j)}) 
\right\}
\end{equation}
\stopabc
\end{sa}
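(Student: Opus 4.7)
The overall strategy for parts (a)--(c) is to exploit that $\operatorname{Rep}_d(-)$ is a left adjoint on the opposite category and therefore converts colimits of $K$-algebras into limits of affine $K$-schemes, combined with the especially simple geometry of $\operatorname{Rep}_d(\calC)$ when $\calC$ is completely split finite dimensional semisimple: by Example~\ref{bsp_dimvec}(a) this scheme is a finite disjoint union $\bigsqcup_n Z_n$ of closed $\GL{d,K}$-orbits of rational points, one $Z_n$ per $n \in \calT_d(\calC)$, each stabilizer $H_n = \stab{x_n}$ being (by Example~\ref{bsp_special_stabilizer}) a product of general linear groups and hence geometrically connected and special. Part~(d) will then follow by iterating (a) and (c) over the decomposition~\eqref{equ_decomp_groupalg}.

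For~(a) the scheme-level identity $\operatorname{Rep}_d(\calA *_\calC \calB) \cong \operatorname{Rep}_d(\calA) \times_{\operatorname{Rep}_d(\calC)} \operatorname{Rep}_d(\calB)$ reduces the analysis to one orbit-component $Z_n$ at a time. Since $\varphi_1^*$ and $\varphi_2^*$ are $\GL{d,K}$-equivariant, Lemma~\ref{lem_slodowy_trick} presents each preimage of $Z_n$ as an associated fibre space $\GL{d,K} \times^{H_n} F_n^{(\bullet)}$, with $F_n^{(\bullet)}$ the fibre over the distinguished rational point $x_n \in Z_n$. The fibre product then becomes $\GL{d,K} \times^{H_n}(F_n^{(\alpha)} \times_K F_n^{(\beta)})$, and geometric connectedness of $\GL{d,K}$ and $H_n$ turns the question of counting connected components carrying rational points into the same question for $F_n^{(\alpha)} \times_K F_n^{(\beta)}$. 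Over the perfect field $K$ a component with a rational point is geometrically connected, so these split as products of components of $F_n^{(\alpha)}$ and $F_n^{(\beta)}$, which in turn biject with pairs $(m,m') \in \calT_d(\calA) \times \calT_d(\calB)$ satisfying $\calT_d(\varphi_2)(m) = n = \calT_d(\varphi_1)(m')$. Rational existence of the identifications involved uses that $Z_n(K)$ is a single $\GL{d}(K)$-orbit, which follows from specialness of $H_n$ and~\eqref{equ_special_bijection_rational_points}.

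Part~(b) is essentially immediate from $\operatorname{Rep}_d(\calA *_K K[t,t\inv]) \cong \operatorname{Rep}_d(\calA) \times_K \GL{d,K}$: the map $\iota_\calA^*$ is the first projection and $\GL{d,K}$ is geometrically connected with a rational point, so projection bijects components carrying rational points. For $\Phi$, on rational points one computes $\Phi^*(\rho,\tau) = (\tau\inv \rho \tau,\tau) = \tau\inv.(\rho,\tau)$ under the diagonal $\GL{d}$-action, so $\Phi^*$ preserves every orbit and hence every connected component. For~(c), applying $\operatorname{Rep}_d$ to the coequalizer defining $\HNN{\calA}{\calC}{\varphi_2,\varphi_3}$ yields an equalizer of schemes, and under the identification from~(b) together with $\calT(\Phi) = \id$ the two parallel maps become precisely $\calT(\varphi_2),\calT(\varphi_3):\calT(\calA) \rightrightarrows \calT(\calC)$. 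Containment of $\calT(\HNN{\calA}{\calC}{\varphi_2,\varphi_3})$ in the equalizer is forced by the defining relation $\tau\inv\rho(\varphi_3(c))\tau = \rho(\varphi_2(c))$ on rational points; conversely, given $m$ in the equalizer with common image $n$, the fact that $Z_n(K)$ is a single $\GL{d}(K)$-orbit lets us promote any rational point $x \in \operatorname{Rep}_m(\calA)(K)$ to a rational representation of $\HNN{\calA}{\calC}{\varphi_2,\varphi_3}$ by choosing $\tau \in \GL{d}(K)$ with $\tau.\varphi_2^*(x) = \varphi_3^*(x)$. Well-definedness of the resulting component is handled by applying Lemma~\ref{lem_slodowy_trick} to the diagonal $\GL{d}$-action on $Z_n \times_K Z_n$: the preimage of the diagonal in $\operatorname{Rep}_m(\calA) \times_K \GL{d,K}$ is then identified with $\GL{d,K} \times^{H_n} F'$ for some $F'$, and iterated use of Lemma~\ref{lem_slodowy_trick} plus $H_n$-specialness reduces its connectedness to that of $\operatorname{Rep}_m(\calA)$ itself.

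Finally,~(d) follows by induction on the number of amalgamated free products and HNN extensions in~\eqref{equ_decomp_groupalg}. At every step the intermediate algebra is again the fundamental algebra of a graph of completely split finite dimensional semisimple $K$-algebras (since $K$ is suitable for $\calG$), so (a) or (c) applies and contributes either a pullback or an equalizer constraint $\calT(\iota_j)(m_{s(j)}) = \calT(\kappa_j)(m_{t(j)})$; accumulating these constraints over $j = 1,\ldots,I+J$ yields exactly~\eqref{equ_dimvec_total}. The hard part I expect to be the rational-point bookkeeping in (a) and (c): one must match $K$-valued points of scheme-theoretic fibre products and equalizers with the combinatorial data in $\calT$, and it is precisely the specialness of the stabilizers $H_n$, combined with perfectness of $K$, that makes this matching work over $K$ itself rather than only after passing to the algebraic closure.
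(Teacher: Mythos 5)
Your proposal follows essentially the same route as the paper: identify $\operatorname{Rep}_d$ of the pushout/HNN extension as a fibre product/equalizer of representation spaces, use that $\operatorname{Rep}_d(\calC)$ is a finite disjoint union of closed orbits of rational points with connected, special stabilizers (Examples \ref{bsp_dimvec}(a) and \ref{bsp_special_stabilizer}), apply Lemma \ref{lem_slodowy_trick} together with \eqref{equ_special_bijection_rational_points} to reduce connectedness and rational-point bookkeeping to fibres over a distinguished rational point of the relevant orbit, and obtain (d) by iterating (a) and (c) along \eqref{equ_decomp_groupalg}. The differences are only organizational (e.g.\ in (a) you apply the Slodowy trick symmetrically over $Z_n$ to both factors and count components of $F_n^{(\alpha)}\times_K F_n^{(\beta)}$, whereas the paper identifies $\overline{\iota_\calB}\inv(y)\cong\overline{\varphi}\inv(z)$ and applies the trick over $Y_\beta$ and $Z$), so your argument is correct and matches the paper's proof.
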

\begin{proof}
About a): \eqref{equ_dimvec_pullback_square} induces a homomorphism $\theta: \calT(\calA*_\calC \calB) \to \calT(\calA)\times_{\calT(\calC)} \calT(\calB)$ with
\[
\calT(\calA)\times_{\calT(\calC)} \calT(\calB) = \{(m,n)\in \calT(\calA)\times \calT(\calB) \mid \calT(\varphi_2)(m)=\calT(\varphi_1)(n)\}
\]
$\theta$ is an isomorphism if and only if its restriction \mbox{$\theta_d: \calT_d(\calA*_\calC \calB) \to \calT_d(\calA)\times_{\calT_d(\calC)} \calT_d(\calB)$} is bijective for every $d\in\bbN_0$ . Denote the natural homomorphisms $\calA\to \calA*_\calC \calB$ and $\calB \to \calA*_\calC \calB$ by $\iota_\calA$ and $\iota_\calB$ and the connected components of $\operatorname{Rep}_d(\calA)$ and $\operatorname{Rep}_d(\calB)$ by $X_0,\ldots , X_a$ and $Y_0,\ldots Y_b$ respectively.\par%
Since the contravariant functor $\operatorname{Rep}_d(-)$ maps colimits to limits, we have a natural isomorphism $\operatorname{Rep}_d(\calA *_\calC \calB)\cong \operatorname{Rep}_d(\calA)\times_{\operatorname{Rep}_d(\calC)} \operatorname{Rep}_d(\calB)$ . This yields a decomposition
\[
\operatorname{Rep}_d(\calA *_\calC \calB) = \bigsqcup_{\substack{0\leq \alpha\leq a,\\ 0\leq \beta\leq b}} X_\alpha \times_{\operatorname{Rep}_d(\calC )} Y_\beta
\]
into open and closed subsets. Hence, each connected component $\operatorname{Rep}_v(\calA*_\calC \calB)$ associated to a dimension vector $v\in \calT(\calA *_\calC \calB)$ lies in a unique $X_\alpha \times_{\operatorname{Rep}_d(\calC )} Y_\beta$ and one checks that the homomorphism $\theta$ is given by $\theta_d(v)=(m_\alpha,n_\beta)$ where $m_\alpha\in\calT(\calA)$ and $n_\beta\in\calT(\calB)$ are the dimension vectors (associated to) $X_\alpha$ and $Y_\beta$ . We claim that $\operatorname{Rep}_{m_\alpha}(\calA) \times_{\operatorname{Rep}_d(\calC )} \operatorname{Rep}_{n_\beta}(\calB) = X_\alpha \times_{\operatorname{Rep}_d(\calC )} Y_\beta$ is connected and contains a rational point for each $(m_\alpha,n_\beta)\in \calT(\calA)\times_{\calT(\calC)} \calT(\calB)$ which proves that $\theta_d$ is bijective.\par%
Denote by $u:=\calT(\varphi_2)(m_\alpha)=\calT(\varphi_1)(n_\beta)\in \calT(\calC)$ the dimension vector lying below $(m_\alpha,n_\beta)$ . By construction both $\operatorname{Rep}_{m_\alpha}(\calA)$ and $\operatorname{Rep}_{n_\beta}(\calB)$ map into the connected component $Z:=\operatorname{Rep}_{u}(\calC)$ and we obtain an isomorphism $X_\alpha \times_{\operatorname{Rep}_d(\calC )} Y_\beta \cong X_\alpha \times_Z Y_\beta$ . Since $m_\alpha$ and $n_\beta$ are dimension vectors, there are $K$-valued points $x\in\operatorname{Rep}_{m_\alpha}(\calA)(K)$ and $y\in\operatorname{Rep}_{n_\beta}(\calB)(K)$ . We set $z:=\varphi_2^*(x)$ .\par%
By Example \ref{bsp_dimvec}(a) we have $Z=\orbit{z}\cong \nicefrac{\GL{d,K}}{\stab{z}}$ and $Y_\beta=\orbit{y} \cong \nicefrac{\GL{d,K}}{\stab{y}}$ . Since $\orbit{z}(K) = \GL{d}(K).z$ and $\varphi_1^*$ restricts to a $\GL{d,K}$-equivariant map $Y_\beta\to Z$ , we may assume without loss of generality\footnote{For $\varphi_1^*(y)=g.z$ we may replace $y$ by $g\inv.y$ .} that $\varphi_1^*(y)=z=\varphi_2^*(x)$ . So $X_\alpha \times_Z Y_\beta(K) $ is non-empty.\par%
Furthermore taking fibres we obtain a commutative diagram
\begin{equation}
\label{yzx}
\begin{xy}\xymatrix{
X_\alpha \times_Z Y_\beta \ar[rr]^{\overline{\iota_\calB}} \ar[dd] && \nicefrac{\GL{d,K}}{\stab[{\GL{a,K}}]{y}} \ar@{-->}[dd]
\\
& \overline{\iota_\calB}\inv (y) \ar[rr] \ar[lu] \ar[dd] && \spec{K}\ar[dd]^{\id } \ar[lu]
\\
{X_\alpha} \ar@{-->}[rr]^-{\overline{\varphi}} && \nicefrac{\GL{d,K}}{\stab[{\GL{a,K}}]{z}}
\\
& \overline{\varphi}\inv(z) \ar[rr] \ar[lu] && \spec{K} \ar@{-->}[lu]
}\end{xy}
\end{equation}
where $\overline{\varphi}$ and $\overline{\iota_\calB}$ are given as the restrictions of $\varphi_2^*$ and $\iota_\calB^*$ . The bottom, top and back squares of \eqref{yzx} are pullback squares, hence, the front square is too and we obtain an isomorphism
\begin{equation*}
\overline{\iota_\calB}\inv (y) \cong \overline{\varphi}\inv(z)
\end{equation*}
Applying Lemma \ref{lem_slodowy_trick} we obtain isomorphisms
\begin{equation*}
X_\alpha \cong \GL{d,K} \times^{\stab{z}} \overline{\varphi}\inv(z) \quad , \quad X_\alpha \times_Z Y_\beta \cong \GL{d,K} \times^{\stab{y}} \overline{\iota_\calB}\inv (y)
\end{equation*}
So since $X_\alpha$ is connected by assumption, $\overline{\iota_\calB}\inv (y) \cong \overline{\varphi}\inv(z)$ and $ X_\alpha \times_Z Y_\beta$ are \mbox{connected} too.\\
About b): Since $\operatorname{Rep}_d(K[t,t\inv]) \cong \GL{d,K}$ and $\operatorname{Rep}_d(K)\cong \spec{K}$ are \mbox{connected,} we have isomorphisms $\calT(K[t,t\inv])\cong \bbN_0\cong \calT(K)$ given by $\mongrad$ respectively. So $\calT(\iota_\calA)$ is an isomorphism by part a).\par%
%
%
Now if $(x,g)\in \operatorname{Rep}_d(\calA)(K) \times \GL{d}(K) = \operatorname{Rep}_d(\calA *_K K[t,t\inv])(K)$ is a $K$-valued point, then $\Phi^*(x,g) = g\inv. (x,g) \in \orbit{(x,g)}(K)$ . So $(x,g)$ and $\Phi^*(x,g)$ lie in the same connected component, because $\orbit{(x,g)}$ is connected. This proves $\calT(\Phi)= \id$ .\\
About c): Denote the natural projection $\calA *_K K[t,t\inv]\to \HNN{\calA}{\calC}{\varphi_2,\varphi_3}$ by $\pi$ . By construction of the HNN extension we have $\pi\circ\iota_\calA\circ \varphi_2 = \pi\circ\Phi\circ\iota_\calA\circ \varphi_3$ . So using part b) we obtain that $\theta:= \calT(\iota_\calA)\circ\calT(\pi) : \calT(\HNN{\calA}{\calC}{\varphi_2,\varphi_3})\to \calT(\calA)$ factorizes over
\[
\operatorname{Eq}(\calT(\varphi_2),\calT(\varphi_3)) = \{m\in\calT(\calA) \mid \calT(\varphi_2)(m)=\calT(\varphi_3)(m)\} \subseteq \calT(\calA)
\]
We have to show that $\theta$ is an isomorphism.\par%
Denote the connected components of $\operatorname{Rep}_d(\calA)$ by $X_0,\ldots, X_a$ . As for a) we obtain a natural isomorphism $\operatorname{Rep}_d(\HNN{\calA}{\calC}{\varphi_2,\varphi_3}) \cong \operatorname{Eq}((\iota_\calA\circ\varphi_2)^*,(\Phi\circ\iota_\calA\circ\varphi_3)^*)$ and a decomposition
\[
\operatorname{Rep}_d(\HNN{\calA}{\calC}{\varphi_2,\varphi_3}) = \bigsqcup_{0\leq \alpha\leq a} (\iota_\calA^*\circ \pi^*)\inv(X_\alpha)
\]
into open and closed subsets $(\iota_\calA^*\circ \pi^*)\inv(X_\alpha) = (\pi^*)\inv ( X_\alpha \times_K \GL{d,K})$ and it remains to show
that $(\pi^*)\inv ( X_\alpha \times_K \GL{d,K})$ is connected and contains a rational point if $X_\alpha = \operatorname{Rep}_{m_\alpha}(\calA)$ corresponds to a dimension vector $m_\alpha\in \operatorname{Eq}(\calT(\varphi_2),\calT(\varphi_3))$ .\par%
We first check using the universal property of the equalizer $\operatorname{Rep}_d(\HNN{\calA}{\calC}{\varphi_2,\varphi_3})$ that a $K$-valued point \mbox{$(x,g)\in X_\alpha(K)\times \GL{d}(K)=(X_\alpha\times_K \GL{d,K})(K)$} lies in the image of the closed embedding $\pi^*$ if and only if
\begin{equation}
\label{equ_equal_dimvecpt}
\rho_x\circ\varphi_2 = (\iota_\calA \circ \varphi_2)^*(x,g) = (\Phi\circ\iota_\calA\circ\varphi_3)^*(x,g) 
= g\inv . (\rho_x\circ \varphi_3)
\end{equation}
The rational points associated to $\rho_x\circ \varphi_2$ and $\rho_x\circ \varphi_3$ belong to the same connected component $Z\subseteq \operatorname{Rep}_d(\calC)$ , because we assumed $m_\alpha\in \operatorname{Eq}(\calT(\varphi_2),\calT(\varphi_3))$ . Again using Example \ref{bsp_dimvec}(a) we know that $Z=\orbit{z}\cong \nicefrac{\GL{d,K}}{\stab{z}}$ for some \mbox{$z\in \operatorname{Rep}_d(\calC)(K)$ .} Hence, there is a $g\in\GL{d}(K)$ satisfying $\rho_x\circ \varphi_2 = g\inv . (\rho_x\circ \varphi_3)$ which yields a rational point in $(\pi^*)\inv ( X_\alpha \times_K \GL{d,K})$ .\par%
Now denote the restriction of \mbox{$(\pi\circ\iota_\calA\circ \varphi_2)^*$} to $(\pi^*)\inv ( X_\alpha \times_K \GL{d,K})$ by $\psi$ . The criterion \eqref{equ_equal_dimvecpt} yields that \mbox{${\psi}\inv(z)\cong (\varphi_2^*)\inv(z) \times_K \stab{z}$ .} So as $\stab{z}$ is geometrically irreducible by Example \ref{bsp_special_stabilizer}, ${\psi}\inv(z)$ is connected if and only if $(\varphi_2^*)\inv(z)$ is.\footnote{see \cite[Tag 0385]{xxstacks}} We now again use Lemma \ref{lem_slodowy_trick} to obtain isomorphisms
\begin{equation*}
X_\alpha \cong \GL{d,K} \times^{\stab{z}} (\varphi_2^*)\inv(z) \quad , \quad 
(\pi^*)\inv ( X_\alpha \times_K \GL{d,K}) \cong \GL{d,K} \times^{\stab{z}} {\psi}\inv(z)
\end{equation*}
So $(\pi^*)\inv ( X_\alpha \times_K \GL{d,K})$ is connected, because the connected component $X_\alpha$ is.\par%
About (d): The claim follows from the decomposition \eqref{equ_decomp_groupalg} of $K[\calG]$ and Example \ref{bsp_dimvec}(a) by repeatedly applying part (a) and (c) above.
\end{proof}
As a corollary from \eqref{equ_dimvec_total} we obtain that the dimension vector monoid $\calT(K[\calG])$ does not depend on the choice of the suitable field $K$ : First let $\calH$ be a finite group and $\calF\subseteq \calH$ be a subgroup. Using well-known arguments from the representation theory of finite groups\footnote{see \cite[§14.6, §15.1 \& Prop. 43 in §15.5]{xxserreReprFin}
} and Example \ref{bsp_dimvec}(a) one first shows that the monoids $\calT(K[\calF])$ and $\calT(K[\calH])$ as well as the homomorphism $\calT(K[\calH])\to \calT(K[\calF])$ do not depend on $K$ . So since $\calT(K[\calG])$ is the limit of a diagram of monoids which itself does not depend on $K$ , $\calT(K[\calG])$ does not depend on $K$ as well. We will therefore drop $K$ from the notation and simply write $\calT(\calG)$ .
\par%
We conclude our current discussion of dimension vectors with a few general \mbox{remarks.} However, the readers may feel free to skip forward to Section \ref{section_examples} for some \mbox{hands-on} \mbox{examples} at this point. We first note another immediate consequence of the \mbox{isomorphism} \eqref{equ_dimvec_total}: $\calT(\calG)$ is equipped with a canonical embedding into the free commutative monoid $\prod_i \calT(\calG_i)$ . This does not imply that $\calT(\calG)$ is free itself, but is a huge restriction on the class of monoids arising as $\calT(\calG)$ .\par%
Moreover $\calT(\calG)$ comes with a canonical homomorphism $\calT(\calG) \to \calT(\calG_i)$ for each $0\leq i\leq I$ and a canonical homomorphism $\calT(\calG) \to \calT(\calG_j')$ for each \mbox{$1\leq j\leq I+J$ .} We say that $m\in\calT(\calG)$ \defit{lies over} $m_i\in\calT(\calG_i)$ for $0\leq i\leq I$ and $u_j\in\calT(\calG_j')$ for $1\leq j\leq I+J$ if these are the images of $m$ under these canonical homomorphisms. Note that these images uniquely determine $m$ due to the isomorphism \eqref{equ_dimvec_total}.\par%
For $c\in \bbNe$ and $m\in\calT(\calG)$ we write $c\vert m$ if there is an $n\in\calT(\calG)$ fulfilling $m = c.n = n+\ldots +n$ . Such an $n$ is necessarily unique and we denote it by \mbox{$\nicefrac{m}{c}:=n$ .} Moreover $\{c\in\bbNe\mid c\vert m\}$ is a finite set $-$ this as well as the uniqueness of $\nicefrac{m}{c}$ are immediate consequences of the embedding $\calT(\calG)\hookrightarrow \prod_i \calT(\calG_i)$ . We denote
\begin{equation}
\label{equ_def_gcddimvec}
\operatorname{gcd}(m) := \max\{c\in\bbNe \mid c\vert m\} = \operatorname{lcm}\{c\in\bbNe \mid c\vert m\}
\end{equation}
Another important property of dimension vectors is that they are additive on short exact sequences, i.e. for every short sequence $0\to \calN\to \calW\to \calM\to 0$ of left modules over a finite type $K$-algebra $\calA$ we have $\dimvec(\calW)=\dimvec(\calN)+\dimvec(\calM)$ . If the sequence splits, this is true by definition of $\calT(\calA)$ . For the general case one uses {iv) and (v) in Subsection \ref{subsec_geometric_methods}.}
\subsection{Homological Euler form}
\label{subsec_euler_form}
We now want to discuss another object which again has a well-known analogue in quiver representation theory: the \defit{\mbox{(homological)} Euler form.} For $\calA$ a (left hereditary) $K$-algebra and finite dimensional left \mbox{$\calA$-modules} $\calM,\calN$ we define
\[
\langle\calM,\calN\rangle_\calA := \dimension[K]{ \Hom[\calA]{\calM,\calN}} - \dimension[K]{ \Ext[1]{\calA}{\calM,\calN}}
\]
\begin{bsp}\thmabsatz
\label{bsp_eulerform}
Let $\calC$ be a completely split finite dimensional semisimple \mbox{$K$-algebra.} Recall that $\calC$ may be written as \eqref{equ_normalform_semisimple}. $\calC$ admits precisely $c:=c_1+\ldots + c_e$ pairwise \mbox{non-isomorphic} (absolutely) simple modules $-$ choose a representative $\calL_\gamma$ for each \mbox{isomorphism} class. For two arbitrary finite dimensional left $\calC$-modules
\[
\calM = \bigoplus_{\gamma=0}^{c-1} {\calL_\gamma}^{\oplus m(\gamma)} \quad , \quad \calN = \bigoplus_{\gamma=0}^{c-1} {\calL_\gamma}^{\oplus n(\gamma)}
\]
we compute the homological Euler form
\[
\langle\calM,\calN\rangle_\calC = \dimension[K]{\Hom[\calC]{\calM,\calN}} = \sum_{\gamma=0}^{c-1} m(\gamma) n(\gamma)
\]
by using Schur's Lemma for the absolutely simple modules $\calL_\gamma$ . For \mbox{$m:=\dimvec(\calM)$} and $n:=\dimvec(\calN)$ we also introduce the notion $\langle m,n\rangle_\calC:= \langle\calM,\calN\rangle_\calC$ which is \mbox{well-defined,} since $\dimvec: \iso[d](\calC)\to \calT_d(\calC)$ is bijective by Example \ref{bsp_dimvec}(a).
\end{bsp}
As for the dimension vector monoid we now want to compute the homological Euler form of the group algebra of a finitely generated virtually free group $\calG$ over a suitable field.
\begin{sa}\thmabsatz
\label{sa_Eulerform_virt_free}
Let $\calA$ and $\calB$ be left hereditary finite type $K$-algebras, $\calC$ a finite dimensional semisimple $K$-algebra and $\varphi_1:\calC\to \calB$ , $\varphi_2,\varphi_3:\calC\to \calA$ $K$-algebra homomorphisms.
\startabc
\item Consider the pushout given by $\varphi_1,\varphi_2$ and let $\calM,\calN$ be finite dimensional left $\calA*_\calC \calB$-modules. The homological Euler form of $\calA*_\calC \calB$ is given by
\begin{equation*}
\langle\calM,\calN\rangle_{\calA*_\calC \calB} = \langle\calM,\calN\rangle_{\calA} + \langle\calM,\calN\rangle_{\calB} - \langle\calM,\calN\rangle_{\calC}
\end{equation*}
\item Consider the HNN extension $\HNN{\calA}{\calC}{\varphi_2,\varphi_3}$ and let $\calM,\calN$ be finite dimensional left $\HNN{\calA}{\calC}{\varphi_2,\varphi_3}$-modules. The homological Euler form of $\HNN{\calA}{\calC}{\varphi_2,\varphi_3}$ is given by
\begin{equation*}
\langle\calM,\calN\rangle_{\HNN{\calA}{\calC}{\varphi_2,\varphi_3}} = \langle\calM,\calN\rangle_{\calA}  - \langle\calM,\calN\rangle_{\calC}
\end{equation*}
\item If $\calG$ is the finitely generated virtually free group given by \eqref{equ_virt_free_decomp} and $K$ is suitable for $\calG$ , then $\langle -,- \rangle_{K[\calG]}$ is given by
\begin{equation}
\label{equ_eulerform_total}
\langle \calM,\calN\rangle_{K[\calG]} = \sum_{i=0}^I \langle m_i, n_i\rangle_{K[\calG_i]} - \sum_{j=1}^{I+J} \langle u_j , v_j\rangle_{K[\calG_j']}
\end{equation}
where $\dimvec(\calM)$ is the dimension vector lying over $m_i\in\calT(\calG_i)$ for $0\leq i\leq I$ and $u_j\in\calT(\calG_j')$ for $1\leq j\leq I+J$ and $\dimvec(\calN)$ is lying over $n_i\in\calT(\calG_i)$ and $v_j\in\calT(\calG_j')$ respectively.
\stopabc
\end{sa}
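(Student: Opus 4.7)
My plan is to establish Mayer--Vietoris-type six-term long exact sequences of $\operatorname{Ext}$-groups associated to the algebra pushout in (a) and the HNN extension in (b), then extract the Euler-form identities by taking alternating $K$-dimensions, and finally deduce (c) by iteration along the decomposition \eqref{equ_decomp_groupalg}.

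For (a), I would begin from the Mayer--Vietoris short exact sequence of left $\calA *_\calC \calB$-modules
\begin{equation*}
0 \to (\calA *_\calC \calB) \otimes_\calC \calN \to \bigl((\calA *_\calC \calB) \otimes_\calA \calN\bigr) \oplus \bigl((\calA *_\calC \calB) \otimes_\calB \calN\bigr) \to \calN \to 0,
\end{equation*}
whose exactness rests on semisimplicity of $\calC$ together with the freeness of $\calA *_\calC \calB$ as a right module over each of $\calA$, $\calB$, $\calC$ (the algebra-level analogue of the Bass--Serre tree decomposition). Applying $\Hom[{\calA *_\calC \calB}]{\calM,-}$ and invoking tensor--hom adjunction yields a long exact sequence whose non-derived terms identify with $\Hom[\calA]{\calM,\calN} \oplus \Hom[\calB]{\calM,\calN}$ and $\Hom[\calC]{\calM,\calN}$, and similarly for $\operatorname{Ext}^1$. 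Since $\calA *_\calC \calB$ is formally smooth by Le Bruyn's theorem cited in Subsection \ref{subsec_virtually_free_groups_and_their_group_algebras}, $\operatorname{Ext}^{\geq 2}$ vanishes on all four algebras involved, so the sequence truncates to a six-term one and the alternating sum of $K$-dimensions gives (a). For (b), the analogous Mayer--Vietoris sequence
\begin{equation*}
0 \to \HNN{\calA}{\calC}{\varphi_2,\varphi_3} \otimes_\calC \calN \to \HNN{\calA}{\calC}{\varphi_2,\varphi_3} \otimes_\calA \calN \to \calN \to 0,
\end{equation*}
with left map encoding the HNN twist $c \mapsto t\inv\varphi_3(c)t - \varphi_2(c)$, produces by the same argument a six-term sequence involving only $\Hom[\calA]{-,-}$, $\Hom[\calC]{-,-}$ and their $\operatorname{Ext}^1$'s, whence (b) by the same alternating-dimension count.

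Part (c) then follows by iterated application of (a) and (b) along the decomposition \eqref{equ_decomp_groupalg}: starting from the base case $\langle m_0, n_0\rangle_{K[\calG_0]}$ (where the Euler form over a completely split finite-dimensional semisimple algebra is computed in Example \ref{bsp_eulerform}), each of the $I$ amalgamation steps contributes $+\langle m_j, n_j\rangle_{K[\calG_j]} - \langle u_j, v_j\rangle_{K[\calG_j']}$ by (a), and each of the $J$ HNN steps contributes $-\langle u_j, v_j\rangle_{K[\calG_j']}$ by (b), telescoping to \eqref{equ_eulerform_total}.

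The main technical obstacle is rigorously setting up the Mayer--Vietoris short exact sequences at the algebra level. While the classical group-algebra case follows directly from the Bass--Serre tree action on $K[\calG]$, for the more general statement one needs the normal-form theorems for elements of algebra pushouts and HNN extensions together with the splitting of the inclusions $\calC \hookrightarrow \calA, \calB$ (provided by semisimplicity of $\calC$) to guarantee the requisite flatness/freeness of $\calA*_\calC \calB$ and $\HNN{\calA}{\calC}{\varphi_2,\varphi_3}$ over their factors; once these resolutions are in place, the rest is a purely formal alternating-dimension bookkeeping.
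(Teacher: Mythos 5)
Your Mayer--Vietoris strategy is a genuinely different route from the paper's, and it can be made to work, but two points need repair. First, a variance slip: induction $\calD\otimes_\calA -$ (for $\calD=\calA*_\calC\calB$) is \emph{left} adjoint to restriction, so adjunction identifies $\Hom[\calD]{\calD\otimes_\calA\calM,\calN}$ with $\Hom[\calA]{\calM,\calN}$; it does \emph{not} identify $\Hom[\calD]{\calM,\calD\otimes_\calA\calN}$ with $\Hom[\calA]{\calM,\calN}$. As written --- the short exact sequence built from $\calN$ followed by the covariant functor $\Hom[\calD]{\calM,-}$ --- the asserted identification of the terms fails. You should instead resolve $\calM$, i.e. use $0\to\calD\otimes_\calC\calM\to(\calD\otimes_\calA\calM)\oplus(\calD\otimes_\calB\calM)\to\calM\to0$ (and its one-sided analogue for the HNN extension), apply the contravariant $\Hom[\calD]{-,\calN}$, and invoke Eckmann--Shapiro, which requires $\calD$ to be projective (or at least flat) as a right module over $\calA$, $\calB$, $\calC$, to identify the derived terms with $\Ext[1]{\calA}{\calM,\calN}$ etc. Second, the appeal to Le Bruyn's formal smoothness theorem is not available: in the proposition $\calA$ and $\calB$ are arbitrary left hereditary finite type algebras, not finite dimensional semisimple ones. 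It is also unnecessary: semisimplicity of $\calC$ gives $\Ext[1]{\calC}{\calM,\calN}=0$, so the long exact sequence already truncates to the six-term sequence you want, and hereditariness of $\calA,\calB$ is all that enters the alternating sum. What genuinely remains open in your sketch is exactly the flatness/exactness input you flag as the ``main technical obstacle'': for coproducts and HNN extensions of algebras over a semisimple base this is a Bergman/Dicks-type coproduct theorem, not a formality, all the more so because the proposition does not assume $\varphi_1,\varphi_2,\varphi_3$ injective.

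The paper avoids this entirely: from the four-term inner-derivation sequence it gets $\langle\calM,\calN\rangle_\calD=\dimension[K]{\calM}\cdot\dimension[K]{\calN}-\dim_K\Der[K]{\calD,\Hom[K]{\calM,\calN}}$ (identifying the kernel with $\Hom[\calD]{\calM,\calN}$ and the cokernel with $\Ext[1]{\calD}{\calM,\calN}$ via Hochschild cohomology), and then uses only that $\Der[K]{-,\calW}$ turns the pushout (resp. the HNN coequalizer) into a pullback (resp. equalizer) of vector spaces, together with the fact that every derivation of the separable algebra $\calC$ is inner, which yields the surjectivity making the dimension count additive. Once the flatness input is supplied, your approach buys a bit more (the six-term exact sequence itself, and hereditariness of the amalgam/HNN extension as a byproduct), while the paper's derivation argument is shorter and needs no structure theory of coproducts. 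Your iteration for part (c) matches the paper's and is fine.
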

\begin{proof}
Let $\calD$ be a finite type $K$-algebra and $\calW$ a $K$-linear $(\calD,\calD)$-bimodule.\footnote{i.e. a left $\calD\otimes_K \calD\op$-module} We consider the $K$-linear map $\eta:\calW\to \Der[K]{\calD,\calW}$ which sends $w\in\calW$ to its inner derivation $\eta(w)=(f\mapsto f\lsc w - w\rsc f)$ and obtain an exact sequence
\begin{equation}
\label{equ_exact_sequ_Euler}
0\to \Kern{\eta}\to \calW\overset{\eta}{\to} \Der[K]{\calD,\calW} \to \Kokern{\eta}\to 0
\end{equation}
For the bimodule $\calW=\Hom[K]{\calM,\calN}$ we obtain $\Kern{\eta}=\Hom[\calD]{\calM,\calN}$ and \mbox{$\Kokern{\eta} \cong \Ext[1]{\calD}{\calM,\calN}$ .}\footnote{see \cite[Lemma 9.1.9 \& Lemma 9.2.1]{xxweibel}} Hence, \eqref{equ_exact_sequ_Euler} yields
\[
\langle\calM,\calN\rangle_\calD = \dimension[K]{\calM}\cdot\dimension[K]{\calN} - \dim_K \Der[K]{\calD,\Hom[K]{\calM,\calN}}
\]
So we can reformulate the claimed identity a) as
\[
\dim_K {\Der[K]{\calD,\calW}} = \dim_K {\Der[K]{\calA,\calW}} + \dim_K {\Der[K]{\calB,\calW}} - \dim_K {\Der[K]{\calC,\calW}}
\]
for $\calD= {\calA*_\calC \calB}$ and $\calW=\Hom[K]{\calM,\calN}$ and the claimed identity b) takes the form
\[
\dim_K {\Der[K]{\calD,\calW}} = \dim_K {\Der[K]{\calA,\calW}} + \dimension[K]{\calW}  - \dim_K {\Der[K]{\calC,\calW}}
\]
for $\calD=\HNN{\calA}{\calC}{\varphi_2,\varphi_3}$ and $\calW=\Hom[K]{\calM,\calN}$ .\par%
For the identity a) we use that $\Der[K]{\calD,\calW}$ is the $K$-vector space pullback \mbox{induced} by $\varphi_1^*$ and $\varphi_2^*$ , hence, $\Der[K]{\calD,\calW}$ is the kernel of the map
\[
({\varphi_2}^*, - {\varphi_1}^*): \Der[K]{\calA,\calW}\oplus \Der[K]{\calB,\calW} \to \Der[K]{\calC,\calW}
\]
which is surjective, because $\calC$ is separable, i.e. every derivation of $\calC$ is inner.\footnote{See \cite[Prop. 4.2]{xxcuntzquillen}. Note that we used here that $K$ is assumed to be perfect. This could be avoided by assuming that $\calC$ is completely split or more generally separable.}\par%
The identity b) is proven similarly:
\[
\Der[K]{\calD,\calW} \overset{\pi^*}{\to} \Der[K]{\calA *_K K[t,t\inv ],\calW} 
\underset{(\varphi_2')^*}{\overset{(\varphi_3')^*}{\rightrightarrows}}
\Der[K]{\calC,\calW}
\]
is an equalizer diagram of vector spaces, i.e. $\Der[K]{\calD,\calW}$ is the kernel of the map 
\[
\Der[K]{\calA *_K K[t,t\inv ],\calW}  \overset{(\varphi_2')^* - (\varphi_3')^*}{\to} \Der[K]{\calC,\calW}
\]
which is surjective as well, because $\calC$ is a separable $K$-algebra. This proves b), since we have an isomorphism
\[
\Der[K]{\calA *_K K[t,t\inv ],\calW} \cong \Der[K]{\calA ,\calW} \oplus \calW \quad , \quad \delta\mapsto ( \delta\circ\iota_\calA ,\delta(t)) 
\]
The proof of c) is now just a repeated application of the parts a) and b).
\end{proof}
%
%
%
Since the righthand side of the formula \eqref{equ_eulerform_total} only depends on the dimension vectors $\dimvec(\calM)$ and $\dimvec(\calN)$ and is $\bbN_0$-linear in both arguments, Proposition \ref{sa_Eulerform_virt_free} yields that the homological Euler form induces a well-defined $\bbN_0$-bilinear map
\[
\langle-,-\rangle_{K[\calG]} : \calT(\calG)\times \calT(\calG) \to \bbZ
\]
Furthermore we see from formula \eqref{equ_eulerform_total} that $\langle-,-\rangle_{K[\calG]}$ does not depend on $K$ . So we will simply denote it by $\langle -,- \rangle_\calG$ . Moreover \eqref{equ_eulerform_total} combined with Example \ref{bsp_eulerform} shows that $\langle -,-\rangle_\calG$ is symmetric.\par%
As before we postpone explicit examples to Section \ref{section_examples}, but the readers may feel free to skip forward to it now.
\subsection{Counting representation spaces}
\label{subsec_counting_representation_spaces}
Now assume $K=\bbF_q$ is finite. We want to show in this subsection that the connected components $\operatorname{Rep}_m(\bbF_q[\calG])$ , $m\in\calT(\calG)$ are polynomial count if $\bbF_q$ is suitable for $\calG$ . As before we start with the case of semisimple algebras.
\begin{bsp}\thmabsatz
\label{bsp_counting_repspace}
Let $\calC$ be a completely split finite dimensional semisimple \mbox{$\bbF_q$-algebra} with \mbox{(absolutely)} simple left modules $\calL_0, \ldots , \calL_{c-1}$ . If $\calM$ is a finite dimensional left $\calC$-module of \mbox{dimension} vector $m=\dimvec(\calM)=\sum_{\gamma} m(\gamma).\dimvec(\calL_\gamma)$ we know from Example \ref{bsp_dimvec}(a) that $\operatorname{Rep}_m(\calC) =\orbit{\calM} \cong\nicefrac{\GL{\vert m\vert,K}}{\stab{x_\calM}}$ . Since $\stab{x_\calM}\cong \prod_{\alpha} \GL{m(\alpha),\bbF_q}$ is special (see Example \ref{bsp_special_stabilizer}), we obtain
\[
\# \operatorname{Rep}_m(\calC)(\bbF_{q^\alpha}) = \#\nicefrac{\GL{\vert m\vert}(\bbF_{q^\alpha})}{\stab{x_\calM}(\bbF_{q^\alpha})} = \frac{\countgl[\vert m\vert]}{\prod_{\gamma=0}^{c-1} \countgl[m(\gamma)]} (q^\alpha)
\]
Using {(i) in Subsection \ref{subsec_geometric_methods}} we see that the rational function $P_m^\calC := \nicefrac{\countgl[\vert m\vert]}{\prod_{\gamma=0}^{c-1} \countgl[m(\gamma)]}$ is in fact a counting polynomial for $\operatorname{Rep}_m(\calC)$ .
\end{bsp}
Similar to $\calT(\calG)$ and $\langle -,-\rangle_\calG$ we give a full description of the counting polynomials of $\operatorname{Rep}_m(\bbF_q[\calG])$ .
\begin{sa}\thmabsatz
\label{sa_counting_repspace}
Let $\calA$ be a finite type $\bbF_q$-algebra, $\calB$ and $\calC$ completely split finite dimensional semisimple $\bbF_q$-algebras and $\varphi_1:\calC\to \calB$ , $\varphi_2,\varphi_3:\calC\to \calA$ \mbox{homomorphisms} of \mbox{$\bbF_q$-algebras.} For $d\in\bbN_0$ fix dimension vectors $m\in\calT_d(\calA)$ , $n\in\calT_d(\calB)$ and \mbox{$u\in\calT_d(\calC)$ .}
\startabc
\item Consider the pushout $\calA *_\calC \calB$ given by $\varphi_1,\varphi_2$ and assume that $(m,n)$ is a dimension vector in $\calT(\calA) \times_{\calT(\calC)} \calT(\calB) =\calT(\calA *_\calC \calB)$ lying over $u$ . If $\operatorname{Rep}_m(\calA)$ admits a counting polynomial $P_m^\calA$ , then the rational function $P_{(m,n)}^{\calA *_\calC \calB} := \nicefrac{P_m^\calA P_n^\calB}{P_u^\calC}$
is a counting polynomial for $\operatorname{Rep}_{(m,n)}(\calA *_\calC \calB)$ .
\item Consider the HNN extension $\HNN{\calA}{\calC}{\varphi_2,\varphi_3}$ and assume that $m$ is an element of the equalizer $\operatorname{Eq}(\calT(\varphi_2),\calT(\varphi_3)) = \calT(\HNN{\calA}{\calC}{\varphi_2,\varphi_3})$ lying over $u$ . If $\operatorname{Rep}_m(\calA)$ \mbox{admits} a counting polynomial $P_m^\calA$ , then the rational function $P_{m}^{\HNN{\calA}{\calC}{\varphi_2,\varphi_3}} := \nicefrac{P_m^\calA \countgl[d]}{P_u^\calC}$
is a counting polynomial for $\operatorname{Rep}_m(\HNN{\calA}{\calC}{\varphi_2,\varphi_3})$ .
\item If $\calG$ is the finitely generated virtually free group given by \eqref{equ_virt_free_decomp} and $\bbF_q$ is suitable for $\calG$ , then
\begin{equation}
\label{equ_counting_repspace_formula}
P_m^\calG 
:= 
\countgl[d]^J \frac{\prod_{i=0}^I P_{m_i}^{\calG_\alpha}}{\prod_{j=1}^{I+J} P_{u_j}^{\calG_j'}}
=
\countgl[d]^J \frac{\prod_{j=1}^{I+J}\prod_{\gamma}^{} \countgl[u_j(\gamma)]}{\prod_{i=0}^I \prod_{\beta}^{} \countgl[m_i(\beta)]}
\end{equation}
is a counting polynomial for $\operatorname{Rep}_m(\bbF_q[\calG])$ where $m\in\calT(\calG)$ is the dimension vector lying over $m_i\in \calT(\calG_i)$ for $0\leq i\leq I$ and over $u_j\in \calT(\calG_j')$ for \mbox{$1\leq j\leq I+J$ .}
\stopabc
\end{sa}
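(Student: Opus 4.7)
The plan is to reduce everything to a transitivity statement on $\bbF_{q^\alpha}$-points. Because $\calC$ is completely split semisimple, Example \ref{bsp_dimvec}(a) identifies $\operatorname{Rep}_u(\calC)$ with a single orbit $\orbit{z} \cong \GL{d,K}/\stab{z}$, and Example \ref{bsp_special_stabilizer} shows that $\stab{z}$ is special. The bijection \eqref{equ_special_bijection_rational_points} then makes $\GL{d}(\bbF_{q^\alpha})$ act transitively on $\operatorname{Rep}_u(\calC)(\bbF_{q^\alpha})$ and supplies the identity $\#\stab{z}(\bbF_{q^\alpha}) = \countgl[d](q^\alpha)/P_u^\calC(q^\alpha)$ coming out of Example \ref{bsp_counting_repspace}. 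These two observations are the only nontrivial input.

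For part (a), since $\operatorname{Rep}_d(-)$ turns pushouts into pullbacks, $\operatorname{Rep}_{(m,n)}(\calA*_\calC\calB) = \operatorname{Rep}_m(\calA) \times_{\operatorname{Rep}_u(\calC)} \operatorname{Rep}_n(\calB)$, and I would stratify the count along $\operatorname{Rep}_u(\calC)(\bbF_{q^\alpha})$. Because $\varphi_2^*$ and $\varphi_1^*$ are $\GL{d,K}$-equivariant and $\GL{d}(\bbF_{q^\alpha})$ is transitive on the target, every nonempty fibre has the same cardinality $P_m^\calA(q^\alpha)/P_u^\calC(q^\alpha)$ respectively $P_n^\calB(q^\alpha)/P_u^\calC(q^\alpha)$, and summing over the target yields $P_m^\calA(q^\alpha)\, P_n^\calB(q^\alpha)/P_u^\calC(q^\alpha)$. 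For part (b) I would use the description extracted in the proof of Proposition \ref{sa_dimvec_virt_free}(c): the $\bbF_{q^\alpha}$-points of $\operatorname{Rep}_m(\HNN{\calA}{\calC}{\varphi_2,\varphi_3})$ are the pairs $(x,g) \in \operatorname{Rep}_m(\calA)(\bbF_{q^\alpha}) \times \GL{d}(\bbF_{q^\alpha})$ satisfying $g.\varphi_2^*(x) = \varphi_3^*(x)$. The condition $m \in \operatorname{Eq}(\calT(\varphi_2),\calT(\varphi_3))$ together with transitivity guarantees that for every $x$ the set of admissible $g$ is a coset of $\stab{\varphi_2^*(x)}(\bbF_{q^\alpha})$ of size $\countgl[d](q^\alpha)/P_u^\calC(q^\alpha)$, so the total count is $P_m^\calA(q^\alpha) \cdot \countgl[d](q^\alpha)/P_u^\calC(q^\alpha)$. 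In both cases fact (i) of Subsection \ref{subsec_geometric_methods} then promotes the rational function in question to an element of $\bbZ[s]$ which is therefore an honest counting polynomial.

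Part (c) is a straightforward induction along the decomposition \eqref{equ_decomp_groupalg}: Example \ref{bsp_counting_repspace} supplies the base-case counting polynomials for each vertex algebra $\bbF_q[\calG_i]$ and each edge algebra $\bbF_q[\calG_j']$, and parts (a) and (b) are applied alternately to work through the amalgamated products and the HNN extensions, producing by telescoping the displayed formula \eqref{equ_counting_repspace_formula}. The only genuine obstacle is already isolated in the first paragraph: ensuring that the edge algebras $\bbF_q[\calG_j']$ remain completely split semisimple so that the transitivity hypothesis applies at every step of the iteration. This is exactly what suitability of $\bbF_q$ for $\calG$ guarantees, making the remainder of the argument a bookkeeping exercise once Examples \ref{bsp_special_stabilizer} and \ref{bsp_counting_repspace} are in hand.
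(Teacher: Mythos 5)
Your argument is correct, and it arrives at the same formulas from the same structural inputs (Example \ref{bsp_dimvec}(a), specialness of $\stab{z}$ via Example \ref{bsp_special_stabilizer} and \eqref{equ_special_bijection_rational_points}, Example \ref{bsp_counting_repspace}, and the component identifications of Proposition \ref{sa_dimvec_virt_free}), but the counting step is organized differently from the paper. The paper re-uses the associated fibre space isomorphisms extracted from the proofs of Proposition \ref{sa_dimvec_virt_free}(a) and (c) $-$ writing $\operatorname{Rep}_{(m,n)}(\calA*_\calC\calB)\cong \GL{d,\bbF_q}\times^{\stab{y}}Y$ and $\operatorname{Rep}_m(\HNN{\calA}{\calC}{\varphi_2,\varphi_3})\cong \GL{d,\bbF_q}\times^{\stab{z}}(Y\times_{\bbF_q}\stab{z})$ $-$ and then counts rational points of these bundles using that both stabilizers are special; you instead work purely with the finite sets of $\bbF_{q^\alpha}$-points, fibering the pullback $\operatorname{Rep}_m(\calA)\times_{\operatorname{Rep}_u(\calC)}\operatorname{Rep}_n(\calB)$ over the single orbit $\operatorname{Rep}_u(\calC)(\bbF_{q^\alpha})$ in (a) and counting cosets of $\stab{\varphi_2^*(x)}(\bbF_{q^\alpha})$ of size $\countgl[d](q^\alpha)/P_u^\calC(q^\alpha)$ in (b). Your route is more elementary at the point-count level and only invokes transitivity of $\GL{d}(\bbF_{q^\alpha})$ on the base orbit, whereas the paper's route keeps the geometric bundle structure explicit (which it has already built and uses elsewhere); both are legitimate. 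One small point of care in (a): what equivariance plus transitivity actually gives is that \emph{all} fibres over points of $\operatorname{Rep}_u(\calC)(\bbF_{q^\alpha})$ are equinumerous (possibly all empty), not merely the nonempty ones; this is exactly what justifies dividing $P_m^\calA(q^\alpha)$ and $P_n^\calB(q^\alpha)$ by $P_u^\calC(q^\alpha)$ and then summing over the base, so you should phrase it that way. With that wording fixed, your reduction of (c) to an iteration of (a) and (b) along \eqref{equ_decomp_groupalg}, with suitability of $\bbF_q$ guaranteeing that all vertex and edge group algebras are completely split, matches the paper's proof of part (c) exactly.
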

\begin{proof}
About (a): As in the proof of Proposition \ref{sa_dimvec_virt_free}(a) we may express $\operatorname{Rep}_m(\calA)$ and $\operatorname{Rep}_{(m,n)}(\calA *_\calC \calB)$ as associated fibre spaces
\[
\operatorname{Rep}_m(\calA) \cong \GL{d,\bbF_q} \times^{\stab{z}} Y \quad , \quad \operatorname{Rep}_{(m,n)}(\calA *_\calC \calB) \cong \GL{d,\bbF_q} \times^{\stab{y}} Y
\]
for $\stab{y}\subseteq \stab{z}\subseteq \GL{d,K}$ the stabilizers of points $y\in\operatorname{Rep}_n(\calB)(\bbF_q)$ , $z\in \operatorname{Rep}_u(\calC)(\bbF_q)$ and $Y$ an affine finite type $\bbF_q$-scheme with $\stab{z}$-action. Since $\stab{y}$ and $\stab{z}$ are special, we may use \eqref{equ_special_bijection_rational_points} to obtain
\[
\# \operatorname{Rep}_m(\calA *_\calC \calB)(\bbF_{q^\alpha}) = \frac{\countgl[d]({q^\alpha})}{\# \stab{y}(\bbF_{q^\alpha})} \#Y(\bbF_{q^\alpha}) = \frac{\countgl[d]({q^\alpha})}{\# \stab{y}(\bbF_{q^\alpha})} \frac{\# \stab{z}(\bbF_{q^\alpha})}{\countgl[d]({q^\alpha})} P_m^\calA(q^\alpha) 
\]
Using Example \ref{bsp_counting_repspace} this proves part (a).
\par%
About (b): As above we use the proof of Proposition \ref{sa_dimvec_virt_free}(c) to obtain
\[
\operatorname{Rep}_m(\calA) \cong \GL{d,\bbF_q} \times^{\stab{z}} Y \quad , \quad \operatorname{Rep}_m(\HNN{\calA}{\calC}{\varphi_2,\varphi_3}) \cong \GL{d,\bbF_q} \times^{\stab{z}} \left( Y \times_{\bbF_q} \stab{z}\right)
\]
for $\stab{z}\subseteq \GL{d,\bbF_q}$ the stabilizer of a point $z\in\operatorname{Rep}_u(\calC)(\bbF_q)$ and we calculate
\[
\#\operatorname{Rep}_m(\HNN{\calA}{\calC}{\varphi_2,\varphi_3})(\bbF_{q^\alpha}) = \countgl[d](q^\alpha) \#Y(\bbF_{q^\alpha}) = \countgl[d](q^\alpha) \frac{\# \stab{z}(\bbF_{q^\alpha})}{\countgl[d]({q^\alpha})} P_m^\calA(q^\alpha)
\]
About (c): We obtain $P_m^\calG$ by repeatedly applying part (a) and (b) above to our decomposition \eqref{equ_decomp_groupalg} of $K[\calG]$ (note that $J$ is the number of HNN-extensions involved in \eqref{equ_decomp_groupalg}). The second expression comes from Example \ref{bsp_counting_repspace} by cancelling out the $\countgl[d]$ occuring in the numerator and denominator of the fraction. 
\end{proof}
The formula \eqref{equ_counting_repspace_formula} in particular shows that the polynomials $P_m^\calG$ are independent of the choice of a finite suitable field for $\calG$ .
%
%
%
\section{Hall algebra methods}
\label{section_4_hall_algebra_methods}
\fakesubsection{Generating functions}
\noindent Consider the field $\bbQ(s)$ of rational functions in the variable $s$ as well as its subring
\[
\localpol = \left\{ \nicefrac{P}{Q}\in \bbQ(s) \mid Q(q)\neq 0\right\}
\]
The $\bbQ$-algebra homomorphisms $\bbQ(s) \hookleftarrow \localpol \overset{\ev}{\twoheadrightarrow} \bbQ$ induce homomorphisms of $\calT(\calG)$-graded \mbox{$\bbQ$-algebras}
\begin{equation}
\label{equ_tricho_graded}
\bbQ(s)[\calT(\calG)] \hookleftarrow \localpol[\calT(\calG)] \overset{\ev}{\twoheadrightarrow} \bbQ [\calT(\calG)]
\end{equation}
The homomorphism $\mongrad: \calT(\calG)\to \bbN_0$ endows every $\calT(\calG)$-graded algebra $\calC$ with an $\bbN_0$-grading where $\calC_d$ is spanned by all homogeneous elements with degree in \mbox{$\calT_d(\calG)$ .} So we may endow $\calC$ as every $\bbN_0$-graded algebra with a topology by taking the ideals $\bigoplus_{\delta\geq d} \calC_\delta$ as a neighbourhood basis of $0$ and form the completion $\widehat{\calC} = \prod_{\delta\geq 0} \calC_\delta$ of $\calC$ with respect to it. We have the following facts on completions of (graded) algebras:
\begin{compactenum}[(i)]
{\item[(vii)] An element $(f_\delta)_{\delta\geq 0}\in \widehat{\calC}$ is a multiplicative unit if and only if $f_0\in\calC_0$ is a unit.}
\item[(viii)] Every graded homomorphism $\calC\to \calC'$ between $\calT(\calG)$-graded algebras extends uniquely to a continuous algebra homomorphism $\widehat{\calC}\to \widehat{\calC'}$ .
\end{compactenum}
So by taking completions of \eqref{equ_tricho_graded} we obtain continuous \mbox{$\bbQ$-algebra} homomorphisms
\begin{equation}
\label{equ_tricho_continuous}
\cplmonoidalg[\bbQ(s)]{\calT(\calG)} \hookleftarrow \cplmonoidalg[\localpol]{\calT(\calG)} \overset{\ev}{\twoheadrightarrow} \cplmonoidalg{\calT(\calG)}
\end{equation}
We now define a second multiplication on the monoid algebras considered above: The so called \defit{twisted multiplication} on $\bbQ [\calT(\calG)]$ is given by bilinear extension of
\[
t^m * t^n:= q^{-\langle m,n\rangle_\calG} . t^{m+n}
\]
Analogously we define $t^m * t^n:= s^{-\langle m,n\rangle_\calG} . t^{m+n}$ on $\bbQ(s)[\calT(\calG)] $ and \mbox{$ \localpol[\calT(\calG)]$ .} We denote the resulting $\calT(\calG)$-graded $\bbQ$-algebras by $\twmonoidalg{$q$-}{\calT(\calG)}$ , $\twmonoidalg[\bbQ(s)]{}{\calT(\calG)}$ and $\twmonoidalg[\localpol]{}{\calT(\calG)}$ . As for the monoid algebras we have $\calT(\calG)$-graded $\bbQ$-algebra homomorphisms analogous to \eqref{equ_tricho_graded} and continuous $\bbQ$-algebra homomorphisms like \eqref{equ_tricho_continuous} between their twisted versions.\par%
The twisted monoid algebras are in fact isomorphic to their untwisted \mbox{counterparts.} To construct explicit isomorphisms between them we need a monoid homomorphism \mbox{$\correctionform: \calT(\calG)\to \bbZ$} which satisfies
\[
\langle m,m\rangle_\calG \equiv \correctionform(m) \mod (2) \quad \fa m\in\calT(\calG)
\]
We construct a distinguished $\correctionform$ to show existence, but everything that follows does not depend on this choice. For a finite group $\calF$ we have an identification $\calT(\calF)\cong {\bbN_0}^c$ and may take $\correctionform(m):=\sum_{\gamma=0}^{c-1} m(\gamma)$ . For the general case of $\calG$ we can mimic our computation of the Euler form and define
\begin{equation*}
\correctionform(m) := \sum_{i = 0}^I \sum_\gamma m_i(\gamma) - \sum_{j=1}^{I+J}\sum_\gamma u_j(\gamma)
\end{equation*}
where $m\in\calT(\calG)$ is the dimension vector lying over $m_i\in {\bbN_0}^{c_i}\cong \calT(\calG_i)$ and over $u_j\in{\bbN_0}^{c_j'}\cong \calT(\calG_j')$ . We may now define a $\bbQ$-vector space isomorphism
\[
\calS : \twmonoidalg{$q$-}{\calT(\calG)} \to \bbQ[\calT(\calG)] \quad , \quad \calS(t^m):= q^{\frac{1}{2}(\langle m,m\rangle_\calG-\correctionform(m))} t^m
\]
and isomorphisms $\twmonoidalg[\bbQ(s)]{}{\calT(\calG)}\cong \bbQ(s)[\calT(\calG)]$ , $\twmonoidalg[\localpol]{}{\calT(\calG)} \cong \localpol [\calT(\calG)]$ via $\calS(t^m) :=s^{\frac{1}{2}(\langle m,m\rangle_\calG-\correctionform(m))} t^m$ . We call each of the maps $\calS$ \defit{shift operator.} By construction the shift operators preserve the $\calT(\calG)$-grading and using that $\langle -,-\rangle_\calG$ is symmetric one can deduce that they are isomorphisms of graded algebras. Hence, they extend uniquely to continuous algebra homomorphisms between the completed monoid algebras.\par%
Shift operators like $\calS$ have already appeared in Mozgovoy-Reineke's treatment of the free group case in \cite{xxMozgovoyReineke2015}. To get rid of the \defit[2]{correction form} $\correctionform$ we could also define a shift operator by $\calS'(t^m) := q^{\frac{1}{2}\langle m,m\rangle_\calG} t^m$ . This would mean however that we have to work with $\bbQ[\sqrt{q}]$ instead of $\bbQ$ , $\bbQ[\sqrt{q}](\sqrt{s})$ instead of $\bbQ(s)$ etc.
\fakesubsection{The Hall algebra integral}
Now fix a finite field $K=\bbF_q$ which is suitable for $\calG$ . We briefly recall the construction of the \defit{finitary Hall algebra} $\bfH(\calA)$ of a finite type $\bbF_q$-algebra $\calA$ :\par%
Denote by $\iso(\calA):= \bigsqcup_{d\geq 0} \iso[d](\calA)$ the set of all isomorphism classes of finite dimensional left $\calA$-modules.\footnote{Analogously we denote by $\ssimp(\calA)$ , $\simp(\calA)$ and $\absimp(\calA)$ the sets of all isomorphism classes of semisimple, simple and absolutely simple modules respectively.} $\bfH(\calA)$ is defined as the free $\bbQ$-vector space on the basis $\iso(\calA)$ with multiplication given by $[\calM]\cdot [\calN] = \sum_{[\calW]} F_{\calM,\calN}^\calW [\calW]$ via structure coefficients
\[
F_{\calM,\calN}^\calW := \#\{ \calL\subseteq \calW \text{ left $\calA$-submodule} \mid \calL\cong \calN, \nicefrac{\calW}{\calL} \cong \calM \}
\]
Since dimension vectors are {additive on short exact sequences}, $\bfH(\calA)$ is $\calT(\calA)$-graded $-$ $\bfH_m(\calA)$ is the $\bbQ$-linear span of $\{[\calM]\in\iso(\calA)\mid \dimvec(\calM)=m\}$ . In particular the homomorphism $\mongrad: \calT(\calA)\to \bbN_0$ induces an $\bbN_0$-grading $\bfH(\calA) = \bigoplus_{\delta\geq 0} \bfH_\delta(\calA)$ . As for the monoid algebras \eqref{equ_tricho_graded} we may complete $\bfH(\calA)$ with respect to this $\bbN_0$-grading. Denote the completed finitary Hall algebra by $\cplhallalg{\calA}$ .\par%
We consider the element $\varepsilon:= \sum_{[\calM]\in\iso(\calA)} [\calM] \in\cplhallalg{\calA}$ which is a \mbox{multiplicative} unit by {(vii) from above}. It was shown by M. Reineke in \cite[Lemma 3.4]{xxreinekecounting} that the coefficients $e_\calM$ of the inverse \mbox{$\varepsilon\inv = \sum_{[\calM]} e_\calM [\calM]$} are given by
\begin{equation}
\label{equ_coeff_epsilon_inverse}
\begin{cases}
\prod\limits_{[\calL]\in\simp(\calA)} (-1)^{a_\calL} \#\End[\calA]{\calL}^{\nicefrac{a_\calL(a_\calL-1)}{2}} &, \quad \text{if } \calM= \bigoplus\limits_{[\calL]\in\simp(\calA)} \calL^{\oplus a_\calL} \text{ semisimple}
\\
\hfil 0 &, \quad \text{if } \calM \text{ not semisimple}
\end{cases}
\end{equation}
For $\calA= \bbF_q[\calG]$ we have a homomorphism of $\calT(\calG)$-graded $\bbQ$-algebras
\begin{equation}
\label{equ_Hall_algebra_integral}
\int: \bfH(\bbF_q[\calG]) \to \twmonoidalg{$q$-}{\calT(\calG)} \quad , \quad \int([\calM]) := \frac{1}{\#\Aut[{\bbF_q[\calG]}]{\calM}} t^{\dimvec(\calM)}
\end{equation}
analogously to \cite[Lemma 3.3]{xxreinekecounting}.\footnote{Reineke's proof of the analogous construction for quiver representations in \cite[§3]{xxreinekecounting} holds without any changes. It only relies on the fact that $\bbF_q[\calG]$ is (left) hereditary and would work for any left hereditary algebra for which the Euler form factorizes over a proper analogue of the dimension vector monoid $\calT(K\vec{Q})$ (like $\calT(\calG)$ in our case).} The map $\int$ is called a \defit{Hall algebra integral.} By {(viii) from above} it extends uniquely to a continuous $\bbQ$-algebra homomorphism between the completions .\par%
We summarize the situation with the following commutative diagram:
\begin{equation*}
\begin{xy}
\xymatrix{
\twcplmonoidalg[\bbQ(s)]{}{\calT(\calG)}{} \ar[d]^\calS_\cong &
\ar@{_{(}->}[l] \twcplmonoidalg[\localpol]{}{\calT(\calG)} \ar@{->>}[r]^{\ev} \ar[d]^\calS_\cong &
\twcplmonoidalg{q-}{\calT(\calG)} \ar[d]^\calS_\cong & \ar@{->>}[l]_\int \cplhallalg{\bbF_q[\calG]}
\\
\cplmonoidalg[\bbQ(s)]{\calT(\calG)} &
\ar@{_{(}->}[l] \cplmonoidalg[\localpol]{\calT(\calG)} \ar@{->>}[r]^{\ev} &
\cplmonoidalg{\calT(\calG)}
}\end{xy}
\end{equation*}
Most of the actual computations we are interested in happen in the ring $\cplmonoidalg[\bbQ(s)]{\calT(\calG)}$ while our knowledge of the representation theory of $\bbF_q[\calG]$ comes from the completed Hall algebra $\cplhallalg{\bbF_q[\calG]}$ . So the procedure is the following: First we observe an interesting identity in $\cplhallalg{\bbF_q[\calG]}$ , then we map it to $\cplmonoidalg{\calT(\calG)}$ and check whether we can reasonably\footnote{This mostly means that the lift should not depend on the specific prime power $q$ .} lift it along $\ev$ . Afterwards we can manipulate the obtained identity within $\cplmonoidalg[\bbQ(s)]{\calT(\calG)}$ .
\section{Counting polynomials}
\label{section_5_counting_polynomials}
\fakesubsection{Refined counting functions}
\noindent After introducing a lot of machinery we now come back to our original {objective} of counting functions and relate them to our machinery. Let $\bbF_q$ be a suitable finite field for $\calG$ . For each dimension vector $m\in\calT(\calG)$ define the \defit{refined counting functions}
\begin{equation}
\label{def_refined_countfct}
\begin{array}{ccl}
\countfct[absim]{}{m}(q^\alpha) &:=& \#\{[\calM]\in \absimp(\bbF_{q^\alpha}[\calG]) \mid \dimvec(\calM)=m\}
\\[2pt]
\countfct{}{m}(q^\alpha) &:=& \#\{[\calM]\in \simp(\bbF_{q^\alpha}[\calG]) \mid \dimvec(\calM)=m\}
\\[2pt]
\countfct[ss]{}{m}(q^\alpha) &:=& \#\{[\calM]\in \ssimp(\bbF_{q^\alpha}[\calG]) \mid \dimvec(\calM)=m\}
\end{array}
\end{equation}
The refined counting functions $\countfct[ss]{}{m}$ and $\countfct[absim]{}{m}$ again count the rational points of GIT moduli spaces: All connected components $\operatorname{Rep}_m(\bbF_q[\calG]) \subseteq \operatorname{Rep}_{\vert m \vert}(\bbF_q[\calG])$ are $\GL{\vert m\vert,\bbF_q}$-invariant, their GIT quotients $\modulirep{\bbF_q[\calG]}{m} := \GITquot{\operatorname{Rep}_m(\bbF_q[\calG])}{\GL{\vert m\vert ,\bbF_q}}$ are the connected components of $\modulirep{\bbF_q[\calG]}{\vert m\vert }$ . Moreover there is a $\GL{\vert m\vert,\bbF_q}$-invariant open subscheme $\operatorname{Rep}_m^{\text{absim}}(\bbF_q[\calG])\subseteq \operatorname{Rep}_m(\bbF_q[\calG])$ such that
\[
\modulirep[absim]{\bbF_q[\calG]}{m} := \GITquot{\operatorname{Rep}_m^{\text{absim}}(\bbF_q[\calG])}{\GL{\vert m\vert ,\bbF_q}} = \modulirep{\bbF_q[\calG]}{m} \cap \modulirep[absim]{\bbF_q[\calG]}{\vert m\vert}\]
The connected components of $\modulirep[absim]{\bbF_q[\calG]}{d}$ are given by those \mbox{$\modulirep[absim]{\bbF_q[\calG]}{m}$ ,} $m\in\calT_d(\calG)$ which are non-empty. These GIT moduli spaces satisfy
\begin{equation}
\label{equ_counting_refined_moduli_spaces}
\countfct[ss]{}{m} \left(q^\alpha\right) = \# \modulirep{\bbF_q[\calG]}{m}(\bbF_{q^\alpha})
\quad , \quad
\countfct[absim]{}{m} \left(q^\alpha\right) = \# \modulirep[absim]{\bbF_q[\calG]}{m}(\bbF_{q^\alpha})
\end{equation}
We can recover the original counting functions \eqref{equ_def_countfct_ordinary} from the refined ones via the formula \mbox{$\countfct[xyz]{}{d} = \sum_{\vert m\vert =d} \countfct[xyz]{}{m}$ .} Moreover we define
\[
\countfct{}{m,c}(q^\alpha) := \#\{[\calM]\in \simp(\bbF_{q^\alpha}[\calG]) \mid \dimvec(\calM)=m, \dimension[\bbF_{q^\alpha}]{\End[{\bbF_{q^\alpha}[\calG]}]{\calM}} = c \}
\]
Analogously to \cite[§4]{xxreinekecounting} we obtain the identities\footnote{The first identity holds just by the definition of absolutely simple modules, the second identity can be obtained from Galois descent and Möbius inversion.}
\begin{equation}
\label{equ_identities_Galois_und_Schur}
\countfct[absim]{}{m}(q^\alpha) = \countfct[sim]{}{m,1}(q^\alpha) \quad , \quad \countfct[sim]{}{m,c}(q^{\alpha}) = \begin{cases}
%
%
\frac{1}{c}\sum_{\gamma\vert c} \mu(\gamma) \countfct[absim]{}{\nicefrac{m}{c}}(q^{\nicefrac{\alpha c}{\gamma}})
&,\quad \text{if } c\vert m
\\
\hfil 0
&,\quad \text{else}
\end{cases}
\end{equation}
Here $\mu: \bbN\to \{-1,0,1\}$ denotes the (classical) Möbius function. Since $\calT(\calG)$ embeds into a free commutative monoid, $\cplmonoidalg[\calC]{\calT(\calG)}$ can be embedded into a formal power series ring $\calC [\![ t_1 ,\ldots , t_a ]\!]$ , i.e. we may interprete the elements of $\cplmonoidalg[\calC]{\calT(\calG)}$ as formal power series. Important examples are
\[
\countfct[xyz]{}{}(q^\alpha) := \sum_{m\in\calT(\calG)} \countfct[xyz]{}{m}(q^\alpha)t^m \in \cplmonoidalg{\calT(\calG)}
\]
where $\text{xyz}\in \{ \text{absim}, \text{sim} , \text{ss}\}$ . Our goal is to lift the power series $\countfct[xyz]{}{}(q^\alpha)$ reasonably along $\ev[q^\alpha]$ , the coefficients $\countpol[xyz]{}{m}$ of such a lift $\countpol[xyz]{}{}$ will be the counting polynomials we are aiming for.
\fakesubsection{Plethystic exponential and logarithm}
%
%
%
%
We now briefly recall the construction of plethystic exponentials and \mbox{logarithms.} First note that $\cplmonoidalg[\bbQ(s)]{\calT(\calG)}$ is a local ring with maximal ideal
\[
\frakm:=\Bigl\{ \sum_{m\in\calT(\calG)} f_m t^m \in \cplmonoidalg[\bbQ(s)]{\calT(\calG)} \mid f_0 = 0 {\Bigr\}}
\]
which is open. The subset $1+\frakm$ is open as well and a topological group with respect to multiplication. $(\frakm,+)$ and $(1+\frakm,\cdot)$ are isomorphic as topological groups, mutually inverse continuous isomorphisms are given by
\begin{equation*}
\frakm \adjunction{\log}{\exp} 1+\frakm \quad , \quad \exp(f):= \sum_{\alpha\geq 0} \frac{f^\alpha}{\alpha !} \quad , \quad \log(1+f) := \sum_{\beta\geq 1} \frac{(-1)^{\beta+1}}{\beta} f^\beta
\end{equation*}
Note that $\exp$ and $\log$ are equally well-defined for $\cplmonoidalg[\localpol]{\calT(\calG)}$ and $\cplmonoidalg{\calT(\calG)}$ and that they commute with the homomorphisms \eqref{equ_tricho_continuous}, e.g. $\exp \circ \ev (f) = \ev\circ \exp(f)$ for each $f\in \frakm\cap \cplmonoidalg[\localpol]{\calT(\calG)}$ .\par%
For each $a\in\bbNe$ we consider the \defit{Adams operation}
\[
\psi_a : \cplmonoidalg[\bbQ(s)]{\calT(\calG)} \to \cplmonoidalg[\bbQ(s)]{\calT(\calG)} \quad ,\quad \psi_a\Bigl(\sum_{m} f_m t^m {\Bigr)} := \sum_{m} f_m(s^a)t^{a.m}
\]
which is a continuous $\bbQ$-algebra homomorphism. They give rise to the mutually inverse continuous group automorphisms
\[
 \frakm \adjunction{\Psi\inv}{\Psi} \frakm \quad , \quad \Psi(f) := \sum_{\alpha\geq 1} \frac{\psi_\alpha(f)}{\alpha} \quad , \quad \Psi\inv (f) = \sum_{\beta\geq 1} \mu(\beta)\frac{\psi_\beta(f)}{\beta}
\]
The \defit{plethystic exponential} and \defit{plethystic logarithm} are defined by \mbox{$\Exp:=\exp\circ\Psi$} and \mbox{$\Log:=\Psi\inv\circ\log$ .} They are by definition mutually inverse continuous group isomorphisms, i.e. they in particular fulfill the usual functional equations
\[
\Exp(f+g)=\Exp(f)\Exp(g) \quad ,\quad \Log(fg) = \Log(f)+\Log(g)
\]
Moreover the same identities hold for convergent infinite sums and products. $\Exp$ and $\Log$ can alternatively be defined on $\cplmonoidalg[\bbQ(\!(s)\!)]{\calT(\calG)}$ where $\bbQ(\!(s)\!)$ denotes the field of formal Laurent series. By some calculations in $\cplmonoidalg[\bbQ(\!(s)\!)]{\calT(\calG)}$ one can prove\footnote{See e.g. (2) in \cite[§2.3]{xxMozgovoyMcKay} for the calculation in the case $c=1$ .}
\begin{equation}
\label{equ_Heim}
\Exp\left(\frac{1}{1-s^c}t^m\right) = \left(\prod_{\alpha\geq 0} 1-s^{c\alpha}.t^m \right) \inv = \sum_{b\geq 0} \left( \prod_{\beta =1}^b (1-s^{c\beta}) \right)\inv . t^{b.m}
\end{equation}
Using the theorem of Krull-Remak-Schmidt for a product factorization of the power series $\countfct[ss]{}{}(q)$ and the second formula in \eqref{equ_identities_Galois_und_Schur} one can prove the following lemma.
\begin{lem}\thmabsatz
\label{lem_Exp(absim)_ss_kleinr}
The power series
\[
E(q) := \sum_{\substack{m\in\calT(\calG),\\ \beta\geq 1}} \frac{1}{\beta} \countfct[absim]{}{m}(q^\beta) t^{\beta.m}
\]
is for each $q$ convergent in $\cplmonoidalg[\bbQ]{\calT(\calG)}$ and satisfies $\exp(E(q))=\countfct[ss]{}{}(q)$ .
\end{lem}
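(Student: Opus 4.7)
The plan is to first establish convergence, then factorize $\countfct[ss]{}{}(q)$ via Krull-Remak-Schmidt, take the ordinary logarithm, and finally rewrite the resulting coefficients in terms of absolutely simple modules using the Möbius-type identity from \eqref{equ_identities_Galois_und_Schur}.

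For convergence, I would argue that since $\mongrad: \calT(\calG) \to \bbN_0$ is a grading, the coefficient of any homogeneous component of fixed total dimension $d$ involves only the finitely many pairs $(m,\beta)$ with $\beta\geq 1$ and $\beta.m$ of total dimension $d$; this makes $E(q)$ a well-defined element of the completion $\cplmonoidalg[\bbQ]{\calT(\calG)}$ and puts $E(q)$ into the maximal ideal $\frakm$ so that $\exp(E(q))$ is defined.

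Next I would use Krull-Remak-Schmidt for finite-dimensional $\bbF_q[\calG]$-modules: every semisimple module decomposes uniquely into simple summands. Since dimension vectors are additive on direct sums, this yields the product factorization
\[
\countfct[ss]{}{}(q) = \prod_{[\calL]\in\simp(\bbF_q[\calG])} \sum_{a\geq 0} t^{a\dimvec(\calL)} = \prod_{[\calL]\in\simp(\bbF_q[\calG])} \frac{1}{1-t^{\dimvec(\calL)}}.
\]
Taking $\log$ (which is well-defined on $1+\frakm$ and continuous, so commutes with the convergent product) gives
\[
\log\countfct[ss]{}{}(q) = \sum_{a\geq 1} \frac{1}{a} \sum_{m\in\calT(\calG)} \countfct[sim]{}{m}(q)\, t^{a.m}.
\]

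The remaining step, which is the main bookkeeping obstacle, is to check that this expression equals $E(q)$. I would decompose $\countfct[sim]{}{m}(q) = \sum_{c\geq 1} \countfct{}{m,c}(q)$ and then apply the second identity in \eqref{equ_identities_Galois_und_Schur}, which vanishes unless $c\mid m$. Substituting $m = c.n$ and reindexing with $\beta := c/\gamma$, the coefficient of $t^{\beta.n}$ (for fixed $n$ and $\beta$) in $\log\countfct[ss]{}{}(q)$ becomes
\[
\countfct[absim]{}{n}(q^\beta) \cdot \frac{1}{\beta}\sum_{k\geq 1}\frac{1}{k}\sum_{\gamma\mid k}\mu(\gamma) \cdot [\text{indicator } k=1],
\]
after combining the $a$-sum and $\gamma$-sum via $k = a\gamma$. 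The classical identity $\sum_{\gamma\mid k}\mu(\gamma) = \delta_{k,1}$ collapses the inner double sum to $1$, leaving exactly $\frac{1}{\beta}\countfct[absim]{}{n}(q^\beta) t^{\beta.n}$. Summing over $n$ and $\beta$ yields $\log\countfct[ss]{}{}(q) = E(q)$, and exponentiating gives the claim. The only real subtlety is the careful exchange of the a priori triply-indexed sums, which is justified by the convergence argument above applied degree-by-degree.
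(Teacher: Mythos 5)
Your proposal is correct and takes essentially the same route the paper indicates for this lemma: the Krull--Remak--Schmidt product factorization $\countfct[ss]{}{}(q)=\prod_{[\calL]\in\simp(\bbF_q[\calG])}\left(1-t^{\dimvec(\calL)}\right)^{-1}$, taking $\log$, and passing from simples to absolutely simples via the second identity in \eqref{equ_identities_Galois_und_Schur} together with $\sum_{\gamma\mid k}\mu(\gamma)=\delta_{k,1}$, i.e.\ exactly the argument of \cite[Lemma 5]{xxMozgovoyKac} transposed from absolutely indecomposables to absolutely simples. The only cosmetic point is that your reindexing should be read as a rearrangement of the triple sum, valid degree by degree since each graded piece is a finite sum, rather than as literal extraction of the coefficient of $t^{\beta.n}$; this does not affect correctness.
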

See \cite[Lemma 5]{xxMozgovoyKac} for the completely analogous proof in the case of \mbox{absolutely} \mbox{indecomposables} instead of absolutely simples. In Theorem \ref{MAIN THEOREM} we will reformulate this lemma in terms of the plethystic exponential $\Exp$ .
\fakesubsection{The main theorem}
We are now ready to prove the existence of counting polynomials for the \mbox{refined} counting functions \eqref{def_refined_countfct}. We begin our proof with a lemma about the element \mbox{$\varepsilon\inv = \sum_{[\calM]} e_\calM [\calM]$} discussed at \eqref{equ_coeff_epsilon_inverse}.
\begin{lem}\thmabsatz
\label{lem_logint}
Let $\bbF_q$ be suitable for $\calG$ . Denote by $\int:\cplhallalg{\bbF_q[\calG]}\to \twcplmonoidalg{$q$-}{\calT(\calG)}$ the Hall algebra integral defined in \eqref{equ_Hall_algebra_integral}. We consider $\int\left(\varepsilon\inv\right)\in  \twcplmonoidalg{$q$-}{\calT(\calG)}$ as an element of $\cplmonoidalg{\calT(\calG)}$ within this lemma. This element satisfies
\[
\log \left( \int\left(\varepsilon\inv\right) \right)
=
\sum_{m\in\calT(\calG)}\sum_{\delta\vert m} \frac{1}{\delta(1-q^{ \delta})} \countfct[absim]{}{\nicefrac{m}{\delta}} \left(q^{\delta}\right) t^m
\]
\end{lem}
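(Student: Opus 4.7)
The plan is to establish the identity in three stages: first factor $\int\left(\varepsilon\inv\right)$ as a convergent infinite product indexed by $\simp(\bbF_q[\calG])$, then compute the logarithm of each factor by means of a functional equation, and finally reorganize the resulting double sum via Möbius inversion.

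For the factorization, I substitute \eqref{equ_coeff_epsilon_inverse} together with the definition of $\int$. For each $[\calL]\in\simp(\bbF_q[\calG])$ Schur's lemma and Wedderburn's little theorem identify $\End[{\bbF_q[\calG]}]{\calL}$ with the finite field $\bbF_{q^{c_\calL}}$ where $c_\calL:=\dimension[{\bbF_q}]{\End[{\bbF_q[\calG]}]{\calL}}$. Writing a semisimple $\calM\cong\bigoplus_{[\calL]}\calL^{\oplus a_\calL}$, Morita equivalence gives $\Aut[{\bbF_q[\calG]}]{\calM}\cong\prod_{[\calL]}\GL{a_\calL}(\bbF_{q^{c_\calL}})$, and a direct calculation shows that the power-of-$q$ prefactors in $e_\calM$ and $\#\Aut[{\bbF_q[\calG]}]{\calM}$ cancel, leaving
\[
\frac{e_\calM}{\#\Aut[{\bbF_q[\calG]}]{\calM}}=\prod_{[\calL]}\frac{1}{\prod_{j=1}^{a_\calL}(1-q^{c_\calL j})}.
\]
Since the untwisted multiplication satisfies $t^{m_1}\cdot t^{m_2}=t^{m_1+m_2}$, this coefficient identity amounts to a factorization
\[
\int\left(\varepsilon\inv\right)=\prod_{[\calL]\in\simp(\bbF_q[\calG])} f_\calL\left(t^{\dimvec(\calL)}\right),\qquad f_\calL(X):=\sum_{a\geq 0}\frac{X^a}{\prod_{j=1}^{a}(1-q^{c_\calL j})},
\]
which converges in $\cplmonoidalg{\calT(\calG)}$ because for every $d\in\bbN_0$ only finitely many simples have total dimension at most $d$.

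For each simple $\calL$, a one-line telescoping of the defining series yields the functional equation $(1-X)f_\calL(X)=f_\calL(q^{c_\calL}X)$. Taking $\log$ and comparing coefficients of $X^n$ in $\log f_\calL(X)-\log f_\calL(q^{c_\calL}X)=-\log(1-X)$ gives $\log f_\calL(X)=\sum_{n\geq 1}\frac{X^n}{n(1-q^{c_\calL n})}$. Applying $\log$ to the product above and grouping terms first by $m:=n\cdot\dimvec(\calL)\in\calT(\calG)$ and then by $c:=c_\calL$ produces the intermediate expression
\[
\log\left(\int\left(\varepsilon\inv\right)\right)=\sum_{m\in\calT(\calG)} t^m \sum_{n\vert m}\frac{1}{n}\sum_{c\geq 1}\frac{\countfct{}{m/n,c}(q)}{1-q^{cn}}.
\]

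The final step is to convert the refined simple counts $\countfct{}{m/n,c}$ into absolutely simple counts via the second identity of \eqref{equ_identities_Galois_und_Schur}. Substituting, setting $\delta:=nc$, introducing $\beta:=n\gamma$, and swapping the order of summation, the coefficient of $t^m$ rearranges to
\[
\sum_{\delta\vert m}\frac{1}{\delta(1-q^\delta)}\sum_{\beta\vert\delta}\countfct[absim]{}{\nicefrac{m}{\delta}}\left(q^{\nicefrac{\delta}{\beta}}\right)\sum_{n\vert\beta}\mu\left(\nicefrac{\beta}{n}\right).
\]
The classical Möbius identity $\sum_{n\vert\beta}\mu(\nicefrac{\beta}{n})=[\beta=1]$ collapses the innermost sum and leaves precisely the claimed right-hand side. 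The main obstacle is the careful bookkeeping through this final reindexing; the algebraic input (the functional equation for $f_\calL$ together with the Möbius collapse) is otherwise routine.
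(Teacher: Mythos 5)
Your proposal is correct and takes essentially the same route as the paper's proof: factor $\int\left(\varepsilon\inv\right)$ over the simples using \eqref{equ_coeff_epsilon_inverse}, take logarithms factorwise, and then convert $\countfct[sim]{}{m,c}$ into absolutely simple counts via the second identity in \eqref{equ_identities_Galois_und_Schur} followed by the Möbius collapse under the same reindexing. The only (harmless) differences are that you verify the cancellation $\nicefrac{e_\calM}{\#\Aut[{\bbF_q[\calG]}]{\calM}}$ explicitly where the paper cites the analogous computation in Mozgovoy--Reineke, and you obtain $\log f_\calL$ from the functional equation $(1-X)f_\calL(X)=f_\calL(q^{c_\calL}X)$ instead of invoking \eqref{equ_Heim} and the plethystic machinery.
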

\begin{proof}
Using that the coefficients $e_\calM$ of $\varepsilon\inv$ are given by \eqref{equ_coeff_epsilon_inverse}, one can calculate\footnote{This computation is completely analogous to the proof of \cite[Thm. 4.2]{xxMozgovoyReineke2009}.}
\[
\sum_{[\calM]\in \iso(\bbF_q[\calG])}\frac{e_\calM}{\#\Aut[{\bbF_q[\calG]}]{\calM}} t^{\dimvec(\calM)} = \prod_{\substack{m\in\calT(\calG),\\ c\vert m}} \left( \sum_{b\geq 0} \left( \prod_{\beta=1}^b (1-q^{c\beta}) \right)\inv .t^{b.m} \right)^{\countfct[sim]{}{m,c}(q)}
\]
in $\cplmonoidalg{\calT(\calG)}$ . So we may apply \eqref{equ_Heim} to obtain
\[
\int \left(\varepsilon\inv \right) = \prod_{\substack{m\in\calT(\calG),\\ c\vert m}} \left(\ev \circ \Exp\left( \frac{1}{1-s^c}t^m \right)\right)^{\countfct[sim]{}{m,c}(q)}
\]
By applying $\log$ and using that $\log$ and $\exp$ commute with $\ev$ we deduce
\[
\log\left(\int \left(\varepsilon\inv \right)\right)
%
%
= \sum_{\substack{m\in\calT(\calG),\\ c\vert m}} {\countfct[sim]{}{m,c}(q)}\sum_{\beta\geq 1} \frac{1}{\beta(1-q^{c\beta})} t^{\beta.m}
\]
Applying the second formula in \eqref{equ_identities_Galois_und_Schur} yields
\[
\log\left(\int \left(\varepsilon\inv \right)\right) = \sum_{\substack{m\in\calT(\calG)}} \sum_{\beta\geq 1} \sum_{c\vert m} \frac{1}{c\beta(1-q^{c\beta})}\sum_{\gamma\vert c} \mu(\gamma)\countfct[absim]{}{\nicefrac{m}{c}} \left( q^{\nicefrac{c}{\gamma}}\right) t^{\beta.m}
%
%
\]
The rest of the proof is done by substituting $n:= \nicefrac{m}{c}, \delta:= \nicefrac{c}{\gamma}, a:=\beta\gamma$ and using that
\[
\sum_{\gamma\vert a} \mu(\gamma) = \begin{cases}
1 &, \quad a=1
\\
0 &,\quad a>1
\end{cases}
\] 
\end{proof}
To formulate our main result below we define the power series
\begin{equation}
\label{equ_F}
F := \calS \Bigl( \sum_{m\in\calT(\calG)} \frac{P_m^\calG}{\countgl[\vert m\vert]} t^m {\Bigr)} \in \cplmonoidalg[{\bbQ[s]_{(s-q^\alpha)}}]{\calT(\calG)}
\end{equation}
\begin{thm}\thmabsatz
\label{MAIN THEOREM}
Let $\bbF_q$ be suitable for the finitely generated virtually free group \mbox{$\calG$ .} Define the power series
\begin{equation}
\label{equ_def_Rabsim_Rss}
\countpol[absim]{}{} := (1-s) \Log\left(\calS\inv\left( F\inv \right)\right) \quad , \quad \countpol[ss]{}{} := \Exp \left(\countpol[absim]{}{}\right)
\end{equation}
for $F$ as defined in \eqref{equ_F} and denote their coefficients by $\countpol[absim]{}{m}$ and $\countpol[ss]{}{m}$ respectively. For each dimension vector $m\in\calT(\calG)$ these coefficients satisfy $\countpol[absim]{}{m} , \countpol[ss]{}{m} \in \bbZ[s]$ and
\begin{equation}
\label{equ_claim_main_thm}
\fa \alpha\geq 1: \countpol[absim]{}{m}\left( q^\alpha\right) = \countfct[absim]{}{m} \left(q^\alpha\right) \quad ,\quad \countpol[ss]{}{m} \left( q^\alpha\right) = \countfct[absim]{}{m} \left(q^\alpha\right)
\end{equation}
\end{thm}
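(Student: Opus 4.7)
The approach is to interpret $F$ as the shift of the Hall algebra integral evaluated on $\varepsilon$, and then chase the resulting identity through $\log$, $\Psi\inv$, and the prefactor $(1-s)$ to recover $\countfct[absim]{}{n}$ via Lemma \ref{lem_logint}; the semisimple case will then follow formally from Lemma \ref{lem_Exp(absim)_ss_kleinr}.

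First I show that for every $\alpha \geq 1$ one has $\ev[q^\alpha](F) = \calS(\int(\varepsilon))$ inside the completed monoid algebra over $\bbF_{q^\alpha}$, where $\int$ is the Hall algebra integral attached to $\bbF_{q^\alpha}[\calG]$. The orbit--stabilizer theorem gives $\#(\GL{\vert m\vert}(\bbF_{q^\alpha}).x_\calM) = \countgl[\vert m\vert](q^\alpha)/\#\Aut[{\bbF_{q^\alpha}[\calG]}]{\calM}$, and summing over isomorphism classes with $\dimvec(\calM)=m$ recovers $\#\operatorname{Rep}_m(\bbF_{q^\alpha}[\calG])(\bbF_{q^\alpha}) = P_m^\calG(q^\alpha)$ by Proposition \ref{sa_counting_repspace}(c). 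Comparing the $s$-exponents in the definition of $\calS$ then yields the identity. Inverting in the twisted ring and applying $\calS\inv$ gives $\ev[q^\alpha](\calS\inv(F\inv)) = \int(\varepsilon\inv)$.

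Next I apply $\log$, which commutes with $\ev[q^\alpha]$, and use Lemma \ref{lem_logint} to compute
\[
\ev[q^\alpha]\bigl(\log(\calS\inv(F\inv))\bigr) = \sum_{n \in \calT(\calG)} \sum_{\delta \vert n} \frac{1}{\delta(1-q^{\alpha\delta})} \countfct[absim]{}{n/\delta}(q^{\alpha\delta})\, t^n.
\]
Writing $L := \log(\calS\inv(F\inv))$, the $t^n$-coefficient of $\ev[q^\alpha](\Psi\inv(L))$ equals $\sum_{\beta \vert n} \frac{\mu(\beta)}{\beta} L_{n/\beta}(q^{\alpha\beta})$, and evaluating each $L_{n/\beta}(q^{\alpha\beta})$ via Lemma \ref{lem_logint} applied now at the suitable field $\bbF_{q^{\alpha\beta}}$ yields
\[
\sum_{\beta \vert n} \frac{\mu(\beta)}{\beta} \sum_{\delta \vert (n/\beta)} \frac{1}{\delta(1-q^{\alpha\beta\delta})}\, \countfct[absim]{}{n/(\beta\delta)}(q^{\alpha\beta\delta}).
\]
Substituting $c := \beta\delta$ and invoking $\sum_{\beta\vert c}\mu(\beta) = 0$ for $c > 1$ collapses this double sum to $\frac{1}{1-q^\alpha}\countfct[absim]{}{n}(q^\alpha)$. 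The prefactor $(1-s)$ cancels the remaining pole at $s=q^\alpha$, establishing $\countpol[absim]{}{n}(q^\alpha) = \countfct[absim]{}{n}(q^\alpha)$.

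The semisimple case is then immediate: unfolding $\Exp = \exp \circ \Psi$,
\[
\ev[q^\alpha]\bigl(\Psi(\countpol[absim]{}{})\bigr) = \sum_{\beta \geq 1} \sum_m \frac{\countfct[absim]{}{m}(q^{\alpha\beta})}{\beta}\, t^{\beta.m} = E(q^\alpha),
\]
so Lemma \ref{lem_Exp(absim)_ss_kleinr} yields $\ev[q^\alpha](\countpol[ss]{}{}) = \exp(E(q^\alpha)) = \countfct[ss]{}{}(q^\alpha)$, which is the intended second identity in \eqref{equ_claim_main_thm}. Integrality $\countpol[absim]{}{m}, \countpol[ss]{}{m} \in \bbZ[s]$ then follows from fact (i) of Subsection \ref{subsec_geometric_methods}, since both are a priori rational functions in $s$ whose values at every suitable prime power count $\bbF_q$-points of the $\bbZ$-models $\modulirep[absim]{\bbZ[\calG]}{m}$ and $\modulirep{\bbZ[\calG]}{m}$ produced by Seshadri's GIT. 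The main obstacle is the Möbius-inversion step, where the Adams operations $\psi_\beta$, the specialization $\ev[q^\alpha]$, and the use of Lemma \ref{lem_logint} at the auxiliary base field $\bbF_{q^{\alpha\beta}}$ must be reconciled simultaneously.
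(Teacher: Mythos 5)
Your proposal is correct and follows essentially the same route as the paper: the identity $\ev[q^\alpha]\left(\calS\inv\left(F\inv\right)\right) = \int\left(\varepsilon\inv\right)$, Lemma \ref{lem_logint}, Lemma \ref{lem_Exp(absim)_ss_kleinr} and fact (i) of Subsection \ref{subsec_geometric_methods} are deployed in the same way. The only real difference is the bookkeeping at the end: you extract $\countfct[absim]{}{m}(q^\alpha)$ by explicit M\"obius inversion through $\Psi\inv$, invoking Lemma \ref{lem_logint} over the base fields $\bbF_{q^{\alpha\beta}}$ and the cancellation $\sum_{\beta\vert c}\mu(\beta)=0$, whereas the paper compares the coefficients $\Lambda_m$ and $\lambda_m$ and runs an induction on $\operatorname{gcd}(m)$ (which also settles membership in $\bbQ[s]_{(s-q^\alpha)}$ along the way) --- the same cancellation in different clothing.
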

\begin{proof}
By {(i) in Subsection \ref{subsec_geometric_methods} and \eqref{equ_counting_refined_moduli_spaces}} it is sufficient to show $\countpol[absim]{}{m} , \countpol[ss]{}{m}\in \bbQ[s]_{(s-q^\alpha)}$ for each $\alpha$ and that $\countpol[absim]{}{m} , \countpol[ss]{}{m}$ fulfill \eqref{equ_claim_main_thm}. For each $\alpha\geq 1$ we consider the continuous $\bbQ$-algebra homomorphism
\[
\calS \circ \int : \cplhallalg{\bbF_{q^\alpha}[\calG]}\to \cplmonoidalg[\bbQ]{\calT(\calG)}
\]
Using that $\Aut{\calM_x} \cong \stab{x}(\bbF_{q^\alpha})$ by \eqref{equ_stabilizer_Aut_fctor_of_pts} and $\# (\GL{d}(\bbF_{q^\alpha}).x) = \nicefrac{\countgl[\vert m\vert] (q^\alpha)}{\# \stab{x}(\bbF_{q^\alpha})}$ for $x\in\operatorname{Rep}_m(\bbF_q[\calG])(\bbF_{q^\alpha})$ and $m\in\calT(\calG)$ , we compute
\[
\int(\varepsilon) = 
%
%
\sum_{\substack{m\in\calT(\calG),\\ 
\dimvec(\calM) = m}}
\frac{1}{\#\Aut{\calM} } t^m  =
%
%
\sum_{m\in\calT(\calG)} \frac{\#\operatorname{Rep}_m(\bbF_q[\calG])(\bbF_{q^\alpha})}{\countgl[\vert m\vert] (q^\alpha)} t^m =
\ev[q^\alpha]\left( \calS\inv(F)\right)
\]
Hence, $\int\left(\varepsilon\inv\right) = \ev[q^\alpha] \left(\calS\inv \left( F\inv \right)\right)$ for $\alpha\geq 1$ . Since we have a power series \mbox{$\int\left(\varepsilon\inv\right) \in \twcplmonoidalg{$q$-}{\calT(\calG)}$} for each power $q^\alpha$ , we consider the expression $\int\left(\varepsilon\inv\right)$ as a function in $q$-powers and denote its value in $q^\alpha$ by $\int\left(\varepsilon\inv\right)\eingeschr{q^\alpha}$ .\par%
%
%
%
%
%
Now define for $m\in\calT(\calG)$ and $\alpha\geq 1$
\[
\Lambda_m := \sum_{\delta\vert m} \frac{1}{\delta(1-s^\delta)} \countpol[absim]{}{\nicefrac{m}{\delta}}\left(s^\delta \right) \in\bbQ(s)
\; , \;
\lambda_m (q^\alpha) := \sum_{\delta\vert m} \frac{1}{\delta(1-q^{\alpha \delta})} \countfct[absim]{}{\nicefrac{m}{\delta}} \left(q^{\alpha \delta}\right) \in\bbQ
\]
By definition of $\Psi$ we have $\sum_m \Lambda_m t^m = \Psi\left((1-s)\inv \countpol[absim]{}{}\right) = \log\circ\calS\inv\left(F\inv\right)$ . On the other hand we have
\[
\sum_{m\in\calT(\calG)} \lambda_m \left(q^\alpha\right) t^m = \log \left( \int\left(\varepsilon\inv\right)\eingeschr{q^\alpha} \right) = \ev[q^\alpha] \circ\log\circ\calS\inv \left( F\inv \right)
\]
by Lemma \ref{lem_logint}, where we use that $\log$ commutes with the evaluation homomorphism $\ev[q^\alpha]$ . Hence, $\Lambda_m(q^\alpha) = \lambda_m(q^\alpha)$ holds for all $m,\alpha$ . Via induction on $\operatorname{gcd}(m)$ it can now be seen that $\countpol[absim]{}{m}\in\bbQ[s]_{(s-q^\alpha)}$ and $\countpol[absim]{}{m}\left( q^\alpha\right) = \countfct[absim]{}{m} \left(q^\alpha\right)$ for all $\alpha\geq 1$ .\footnote{For the definition of $\operatorname{gcd}(m)$ see \eqref{equ_def_gcddimvec} above.}\par%
We deduce the claim for $\countpol[ss]{}{}$ from Lemma \ref{lem_Exp(absim)_ss_kleinr}. Since $\countpol[absim]{}{m}\in\bbZ[s]$ for all $m$ , we have that \mbox{$\Psi\left(\countpol[absim]{}{}\right)\in \cplmonoidalg[{\bbQ[s]}]{\calT(\calG)}$ .} {Hence, $\countpol[ss]{}{} = \exp\circ\Psi\left(\countpol[absim]{}{}\right) \in \cplmonoidalg[{\bbQ[s]}]{\calT(\calG)}$ too.} Moreover one checks immediately that $E(q^\alpha) = \ev[q^\alpha]\circ \Psi\left( \countpol[absim]{}{} \right)$ for all $\alpha$ . So Lemma \ref{lem_Exp(absim)_ss_kleinr} shows that $\ev[q^\alpha] \left( \countpol[ss]{}{} \right) = \exp ( E(q^\alpha)) = \countfct[ss]{}{}(q^\alpha)$ for all \mbox{$\alpha\geq 1$ .}
\end{proof}
Note that the counting polynomials are independent of the choice of the suitable field $\bbF_q$ , because all objects involved in \eqref{equ_F} and \eqref{equ_def_Rabsim_Rss} are. As {already stated at the end of Subsection \ref{subsec_virtually_free_groups_and_their_group_algebras}} all statements in Theorem \ref{MAIN THEOREM} hold in the more general setting of representations of the fundamental algebra \eqref{equ_fundamental_algebra_of_semisimples} of a finite graph of finite dimensional semisimple $\bbF_q$-algebras for which each of the semisimple algebras $\calA_\alpha$ and $\calA_e'$ are completely split.\par%
The proof of the following corollary is immediate from Theorem \ref{MAIN THEOREM} and \eqref{equ_identities_Galois_und_Schur}.
\begin{kor}\thmabsatz
Let $\bbF_q$ be suitable for $\calG$ . For $m\in\calT(\calG)$ , $c\geq 1$ define
\[
\countpol[sim]{}{m,c} := 
\begin{cases}
\frac{1}{c}\sum_{\gamma \vert c} \mu\left(\gamma\right) \countpol[absim]{}{\nicefrac{m}{c}}\left( s^{\nicefrac{c}{\gamma}}\right)
&, \quad \text{if } c\vert m
\\
\hfil 0 &, \quad \text{else}
\end{cases}
\]
and $\countpol[sim]{}{m} := \sum_{c\vert m} \countpol[sim]{}{m,c}$ . The polynomials $\countpol[sim]{}{m,c},\countpol[sim]{}{m}\in\bbQ[s]$ satisfy
\[
\fa \alpha\geq 1: \countpol[sim]{}{m,c}\left( q^\alpha\right) = \countfct[sim]{}{m,c}\left( q^\alpha\right)
 \quad , \quad 
\countpol[sim]{}{m}\left( q^\alpha\right) = \countfct[sim]{}{m}\left( q^\alpha\right)
\]
\end{kor}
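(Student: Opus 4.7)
The plan is to deduce this corollary directly from Theorem \ref{MAIN THEOREM} together with the Galois-descent / Möbius-inversion identities collected in \eqref{equ_identities_Galois_und_Schur}, exactly as the sentence preceding the corollary asserts. No new machinery should be required; the entire argument is a coefficient-by-coefficient comparison of two explicit rational expressions.

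First I would verify that $\countpol[sim]{}{m,c}$ is a well-defined element of $\bbQ[s]$. By Theorem \ref{MAIN THEOREM} we know $\countpol[absim]{}{\nicefrac{m}{c}}\in\bbZ[s]$, so $\countpol[absim]{}{\nicefrac{m}{c}}(s^{c/\gamma})\in\bbZ[s]$ for every $\gamma\mid c$, and multiplying by the scalar $\tfrac{1}{c}\mu(\gamma)\in\bbQ$ and summing keeps us in $\bbQ[s]$. The case distinction on $c\mid m$ is well-defined because $c\mid m$ in the monoid $\calT(\calG)$ admits at most one quotient $\nicefrac{m}{c}$, as noted after \eqref{equ_def_gcddimvec}. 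The sum $\countpol[sim]{}{m}=\sum_{c\mid m}\countpol[sim]{}{m,c}$ is finite (again by the remarks after \eqref{equ_def_gcddimvec}), hence also a well-defined element of $\bbQ[s]$.

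Next I would substitute $s=q^\alpha$. When $c\mid m$, the definition gives
\[
\countpol[sim]{}{m,c}(q^\alpha)=\frac{1}{c}\sum_{\gamma\mid c}\mu(\gamma)\,\countpol[absim]{}{\nicefrac{m}{c}}\!\left(q^{\alpha c/\gamma}\right),
\]
and by Theorem \ref{MAIN THEOREM} each factor $\countpol[absim]{}{\nicefrac{m}{c}}(q^{\alpha c/\gamma})$ equals $\countfct[absim]{}{\nicefrac{m}{c}}(q^{\alpha c/\gamma})$. Comparing with the second identity in \eqref{equ_identities_Galois_und_Schur} (with $\alpha$ there playing the role of $\alpha$ here), the right-hand side is exactly $\countfct[sim]{}{m,c}(q^\alpha)$. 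When $c\nmid m$, both $\countpol[sim]{}{m,c}(q^\alpha)$ and $\countfct[sim]{}{m,c}(q^\alpha)$ vanish by definition. This settles the first equality.

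Summing over $c\mid m$ then yields $\countpol[sim]{}{m}(q^\alpha)=\sum_{c\mid m}\countfct[sim]{}{m,c}(q^\alpha)$; the right-hand side equals $\countfct[sim]{}{m}(q^\alpha)$ because every finite-dimensional simple $\bbF_{q^\alpha}[\calG]$-module $\calM$ of dimension vector $m$ has a well-defined endomorphism-algebra dimension $c=\dim_{\bbF_{q^\alpha}}\!\End(\calM)$ over $\bbF_{q^\alpha}$, and the classes of such modules are partitioned by this invariant with $c$ necessarily dividing $m$ (since $\calM$ descends from an absolutely simple module of dimension vector $\nicefrac{m}{c}$ after base change). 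There is no genuine obstacle: the whole content of the corollary is packaging the already-established polynomiality of $\countpol[absim]{}{-}$ through the Möbius-inverted Galois-descent formula, so the only mild care needed is in keeping track of the case $c\nmid m$ and of the finiteness of the sum defining $\countpol[sim]{}{m}$.
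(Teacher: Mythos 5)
Your proposal is correct and coincides with the paper's argument: the paper itself dispenses with a written proof, stating that the corollary is immediate from Theorem \ref{MAIN THEOREM} together with the Galois-descent/M\"obius identities \eqref{equ_identities_Galois_und_Schur}, which is exactly the coefficientwise substitution you carry out. Your additional remarks on well-definedness (uniqueness of $\nicefrac{m}{c}$ and finiteness of the divisor set, via the remarks after \eqref{equ_def_gcddimvec}) are accurate and only make explicit what the paper leaves implicit.
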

\section{Examples}
\label{section_examples}
\subsection{Examples for section 3}
In this section we want to provide explicit examples of the objects discussed within this paper. As all of these invariants associated to a virtually free group are derived from the invariants associated to its finite subgroups, we will start with applying the Examples \ref{bsp_dimvec}(a), \ref{bsp_eulerform} and \ref{bsp_counting_repspace} to explicit finite groups. Since our invariants are independent of the choice of a suitable field $K$ , we may without loss of generality work over $K=\bbC$ .
\fakesubsubsection{Example: finite Abelian groups}
Assume $\calF$ is a finite Abelian group of order $\#\calF=a$ . Every (absolutely) simple representation of $\calF$ is of dimension 1. Hence, $\bbC[\calF]\cong \bbC^a$ , $\calT(\calF) \cong {\bbN_0}^a$ and $\mongrad: {\bbN_0}^a \to \bbN_0$ is given by $\vert m\vert = \sum_\alpha m(\alpha)$ . $\langle - ,-\rangle_\calF$ and $P_m^\calF$ are given by
\begin{equation}
\label{equ_eulerform_counting_rep_finite}
\langle m,n\rangle_\calF = \sum_{\alpha = 0}^{a-1} m(\alpha) n(\alpha) 
\quad , \quad
P_m^\calF := \nicefrac{\countgl[\vert m\vert]}{\prod_{\alpha=0}^{a-1} \countgl[m(\gamma)]}
\end{equation}
More generally: If $\calF$ is any finite group, then $\langle - ,-\rangle_\calF$ and $P_m^\calF$ are given by \eqref{equ_eulerform_counting_rep_finite} where $c$ is the number of generators of the free commutative monoid $\calT(\calF)$ .
\fakesubsubsection{Example: dihedral groups}
Now consider the dihedral group $\bbD_c = \langle s,t\mid s^2=t^2=1= (st)^c\rangle$ of order $2c$ . First consider the case $c=2a$ even: There are 4 (absolutely) simple representations of dimension 1 and $a-1$ (absolutely) simple representations of dimension 2. Hence, $\bbC [\bbD_{2a}] \cong \bbC^4 \times \bfM_2(\bbC)^{a-1}$ , $\calT(\bbD_{2a})={\bbN_0}^{a+3}$ and $\vert m\vert = \sum_{\gamma=0}^3 m(\gamma) + 2\sum_{\gamma = 4}^{a+2} m(\gamma)$ .\par%
If $c= 2a+3$ is odd, we have 2 (absolutely) simple representations of dimension 1 and $a+1$ of dimension 2. So we have $\bbC [\bbD_{2a+3}] \cong \bbC^2 \times \bfM_2(\bbC)^{a+1}$ , $\calT(\bbD_{2a+3})={\bbN_0}^{a+3}$ and $\vert m\vert = \sum_{\gamma=0}^1 m(\gamma) + 2\sum_{\gamma = 2}^{a+2} m(\gamma)$ .\par%
%
%
%
%
%
\fakesubsubsection{Example: Amalgamated free products of cyclic groups}
\label{example_dimvec_amalgamated_free_product}
We now consider the amalgamated free product $C_a *_{C_c} C_b$ . Denote the embeddings of $C_c$ by $\iota: C_c \hookrightarrow C_a$ and $\kappa: C_c\hookrightarrow C_b$ . For each (absolutely) simple representation of $C_c$ there are $\nicefrac{a}{c}$ ones of $C_c$ and $\nicefrac{b}{c}$ ones of $C_b$ which are restricted to it. Hence, by Proposition \ref{sa_dimvec_virt_free} $\calT(C_a *_{C_c} C_b)$ is given by
\[
{\bbN_0}^a \times_{{\bbN_0}^c} {\bbN_0}^b = 
\Bigl\{
(m,n) \in {\bbN_0}^a \times {\bbN_0}^b 
\mid
\fa 0\leq \gamma< c:
\sum_{\delta=0}^{\nicefrac{a}{c}-1} m(\gamma + \delta c) = \sum_{\epsilon=0}^{\nicefrac{b}{c}-1} n(\gamma + \epsilon c)
{\Bigr\}}
\]
with $\vert (m,n)\vert = \sum_\alpha m(\alpha) = \sum_\beta n(\beta)$ . By Proposition \ref{sa_Eulerform_virt_free} the Euler form is given by
\[
\langle (m,n), (u,v)\rangle_{C_a *_{C_c} C_b} = \sum_{\alpha = 0}^{a-1} m(\alpha) u(\alpha) + \sum_{\beta = 0}^{b-1} n(\beta) v(\beta) 
- \sum_{\gamma=0}^{c-1} \sum_{\delta=0}^{\nicefrac{a}{c}-1} \sum_{\epsilon=0}^{\nicefrac{b}{c}-1}  m(\gamma + \delta c) n(\gamma + \epsilon c)
\]
Note that by permuting the entries of ${\bbN_0}^a \times {\bbN_0}^b$ we obtain a monoid isomorphism \mbox{${\bbN_0}^a \times_{{\bbN_0}^c} {\bbN_0}^b \cong \left( {\bbN_0}^{\nicefrac{a}{c}} \times_{{\bbN_0}} {\bbN_0}^{\nicefrac{b}{c}}\right)^c \cong \calT(C_{\nicefrac{a}{c}} * C_{\nicefrac{b}{c}})^c$ .}
\fakesubsubsection{Example: $\PGL{2}(\bbZ)$ and $\GL{2}(\bbZ)$}
%
%
%
Our last two examples in this subsection are the groups $\PGL{2}(\bbZ)\cong \bbD_2 *_{C_2} \bbD_3$ and $\GL{2}(\bbZ)\cong \bbD_4 *_{C_2 \times C_2} \bbD_6$ . Using Proposition \ref{sa_dimvec_virt_free} and the computation of $\calT(\bbD_c)$ above, one can compute that $\calT(\PGL{2}(\bbZ))$ is isomorphic to
\[
\left\{ 
(m,n) \in {\bbN_0}^4 \times {\bbN_0}^3 \mid m(0)+m(1) = n(0)+n(2), m(2)+m(3) = n(1)+n(2)
\right\}
\]
with $\vert (m,n)\vert = \sum_\gamma m(\gamma) = n(0)+n(1)+2n(2)$ and that $\calT(\GL{2}(\bbZ))$ is isomorphic to $\{(m,n) \in {\bbN_0}^5 \times {\bbN_0}^6 \mid (*)\}$ where $(*)$ are the three relations
\begin{center}
%
$
m(0)+m(1)=n(0)+n(1)+n(5) \; , \; m(4) = n(4) \; , \; m(2)+m(3) = n(2)+n(3)+n(5) 
$
\end{center}
and with $\vert (m,n)\vert = 2m(4) + \sum_{\gamma=0}^3 m(\gamma) = 2n(4)+2n(5)+\sum_{\delta=0}^3 n(\delta)$ .
\subsection{Examples of counting polynomials}
\label{subsec_examples_counting_polynomials}
We now want to present some examples for the counting polynomials. A first trivial example are the counting polynomials of a finite group $\calF$ : Here $\dimvec: \iso(\bbC[\calF]) \to \calT(\calF)$ is bijective by Example \ref{bsp_dimvec}. Hence, $\countpol[ss]{,\calF}{m}=1$ for all $m\in\calT(\calF)$ and $\countpol[absim]{,\calF}{m}=1$ if the unique $[\calM]\in \iso(\bbC[\calF])$ of $\dimvec(\calM)=m$ is (absolutely) simple and zero otherwise.\par%
For $\calG$ an arbitrary finitely generated virtually free group given by \eqref{equ_virt_free_decomp} one first needs to compute the free commutative monoids $\calT(\calG_i)$ and $\calT(\calG_j')$ as well as the \mbox{homomorphisms} $\calT(\iota_j)$ and $\calT(\kappa_j)$ between them as we have done above for some examples, i.e. one has to classify the representation theory of these finite groups e.g. over $\bbC$ . The rest of the computation of the counting polynomials can be done by a computer, e.g. using the accompanying SageMath code. All of the examples below (and in fact many more) have been computed in this way.
\fakesubsubsection{Example: (generalized) infinite dihedral}
First consider the group $\calG_c:= C_{2c} *_{C_c} C_{2c}$ . $\calG_c$ is a finite central extension of the infinite dihedral group $\bbD_\infty = C_2 * C_2$ . As discussed \hyperref[{example_dimvec_amalgamated_free_product}]{above} its dimension vector monoid can be written as $\calT(\calG_c)\cong \left( {\bbN_0}^2 \times_{\bbN_0} {\bbN_0}^2 \right)^c$ . For the dimension vector $m=(m_0,\ldots, m_{c-1})$ we have
\begin{equation}
\label{equ_counting_polyn_extended_dihedral}
\countpol[absim]{}{m} = \begin{cases}
1 &, \quad \text{if }\vert m\vert =1
\\
s-2 &, \quad \text{if }\esex \gamma \text{ s.t. } m_\gamma = (1,1,1,1) \;\&\; m_\delta = (0,0,0,0) \; \fa \delta\neq \gamma
\\
0 &, \quad \text{else}
\end{cases}
\end{equation}
In particular all absolutely simple representations of $\calG_c$ over a suitable field occur in dimension 1 or 2. The group $\calG_c$ is among the few groups for which it is possible to determine all the polynomials $\countpol[absim]{}{m}$ explicitly. In fact, we not only count but classify all absolutely simple representations of $\calG_c$ in Subsection \ref{subsec_classification_extended_dihedral} below.
\fakesubsubsection{Example: $\PSL{2}(\bbZ)$}
We now consider $\PSL{2}(\bbZ) \cong C_2* C_3$ with $\calT(\PSL{2}(\bbZ)) \cong {\bbN_0}^{2} \times_{{\bbN_0}} {\bbN_0}^{3}$ . For $\vert m\vert \leq 4$ those $\countpol[absim]{,\PSL{2}(\bbZ)}{m}$ which are non-zero are listed below.
\begin{center}
\resizebox{12cm}{!}{
\begin{tabular}{|c|c|}
\hline
$m$ & $\countpol[absim]{,\PSL{2}(\bbZ)}{m}$ 
\\\hline
$((1,0),(1,0,0))$ & $1$ 
\\\hline
$((1,0),(0,1,0))$ & $1$ 
\\\hline
$((1,0),(0,0,1))$ & $1$ 
\\\hline
$((0,1),(1,0,0))$ & $1$ 
\\\hline
$((0,1),(0,1,0))$ & $1$ 
\\\hline
$((0,1),(0,0,1))$ & $1$ 
\\\hline
$((1,1),(1,1,0))$ & $s-2$
\\\hline
\end{tabular}
\begin{tabular}{|c|c|}
\hline
$m$ & $\countpol[absim]{,\PSL{2}(\bbZ)}{m}$
\\\hline
$((1,1),(1,0,1))$ & $s-2$
\\\hline
$((1,1),(0,1,1))$ & $s-2$
\\\hline
$((2,1),(1,1,1))$ & $s^2-3s+3$
\\\hline
$((1,2),(1,1,1))$ & $s^2-3s+3$
\\\hline
$((2,2),(2,1,1))$ & $s^3-3s^2+5s-4$
\\\hline
$((2,2),(1,2,1))$ & $s^3-3s^2+5s-4$
\\\hline
$((2,2),(1,1,2))$ & $s^3-3s^2+5s-4$
\\\hline
\end{tabular}
}
\end{center}
For $\vert m\vert\leq 5$ all non-zero $\countpol[absim]{,\PSL{2}(\bbZ)}{m}$ in a given total dimension $\vert m\vert$ coincide. However, from total dimension $\vert m\vert=6$ on this fails as the following polynomials show.
\begin{center}
\resizebox{15cm}{!}{
\begin{tabular}{|c|c||c|c|}
\hline
$m$ & $\countpol[absim]{,\PSL{2}(\bbZ)}{m}$ & $m$ & $\countpol[absim]{,\PSL{2}(\bbZ)}{m}$
\\\hline
$((4,2),(2,2,2))$ & $s^5-4s^4+6s^3-7s^2+9s-6$
& $((3,3),(1,3,2))$ & $s^5-3s^4+5s^3-7s^2+9s-6$
\\\hline
$((3,3),(3,2,1))$ & $s^5-3s^4+5s^3-7s^2+9s-6$
& $((3,3),(2,1,3))$ & $s^5-3s^4+5s^3-7s^2+9s-6$
\\\hline
$((3,3),(2,3,1))$ & $s^5-3s^4+5s^3-7s^2+9s-6$
& $((3,3),(1,2,3))$ & $s^5-3s^4+5s^3-7s^2+9s-6$
\\\hline
$((3,3),(3,1,2))$ & $s^5-3s^4+5s^3-7s^2+9s-6$
& $((4,2),(2,2,2))$ & $s^5-4s^4+6s^3-7s^2+9s-6$
\\\hline
$((3,3),(2,2,2))$ & \multicolumn{3}{c|}{$s^7+3s^6-10s^5+3s^4+14s^3-27s^2+35s-23$}
\\\hline
\end{tabular}
}
\end{center}
The examples above suggest that there are symmetries on the sets $\calT_d(\PSL{2}(\bbZ))$ along which the counting polynomials stay the same. This is indeed the case for all of the groups $C_a *_{C_c} C_b$ and we will discuss these symmetries in Section \ref{section_structural_properties} below.\par%
\fakesubsubsection{Example: $\SL{2}(\bbZ)$}
The polynomials $\countpol[absim]{,\SL{2}(\bbZ)}{m}$ are basically the same as those for $\PSL{2}(\bbZ)$ . More generally for $m=(m_0,\ldots ,m_{c-1}) \in \calT(C_{\nicefrac{a}{c}} * C_{\nicefrac{b}{c}})^c \cong \calT(C_a*_{C_c} C_b)$ we have
\[
\countpol[absim]{,C_a *_{C_c} C_b}{m} = 
\begin{cases}
\countpol[absim]{, C_{\nicefrac{c}{e}} * C_{\nicefrac{d}{e}}}{m_\gamma} &, \quad \text{if }\esex \gamma \text{ s.t. } m_\delta = 0 \; \fa \delta\neq \gamma
\\
\hfil 0 &, \quad \text{else}
\end{cases}
\]
However, the analogous statement for the counting polynomials $\countpol[ss]{,C_a*_{C_c} C_b}{m}$ is false.
\subsection{Counting polynomials of character varieties}
We now want to give \mbox{examples} for the counting polynomials $\countpol[ss]{,\calG}{d}$ . Recall that these give the E-polynomials of the character varieties $X_\calG(\GL{d}(\bbC)) = \modulirep{\bbC[\calG]}{d}$ as discussed \hyperref[fakesubsub_E-polynomials]{in Subsection \ref{subsec_geometric_methods}}.
\ifthenelse{1=1}{
\begin{center}
\resizebox{13cm}{!}{
\begin{tabular}{|c|c||c|c|}
\hline
$d$ & \multicolumn{3}{c|}{ $\countpol[ss]{,\PSL{2}(\bbZ)}{d}$ }
\\\hline
$1$ & $6$
&
$2$ & $3s+15$
\\\hline
$3$ &
\mbo\mbo\mbo\mbo\mbo\mbo\mbo\mbo\mbo\mbo\mbo
$2 s^{2} + 12 s + 26$
\mbo\mbo\mbo\mbo\mbo\mbo\mbo\mbo\mbo\mbo\mbo\mbo
&
$4$ & $3 s^{3} + 9 s^{2} + 24 s + 39$
\\\hline
$5$ & \multicolumn{3}{c|}{ $6 s^{4} + 6 s^{3} + 24 s^{2} + 36 s + 54$ }
\\\hline
$6$ & \multicolumn{3}{c|}{ $s^{7} + 3 s^{6} - 2 s^{5} + 25 s^{4} + 56 s^{2} + 41 s + 71$ }
\\\hline
$7$ & \multicolumn{3}{c|}{ $6 s^{8} + 12 s^{7} - 30 s^{6} + 54 s^{5} + 36 s^{3} + 54 s^{2} + 66 s + 90$ }
\\\hline
$8$ & \multicolumn{3}{c|}{ $3 s^{11} + 9 s^{10} + 9 s^{9} - 33 s^{8} + 66 s^{7} - 60 s^{6} + 81 s^{5} + 24 s^{4} + 33 s^{3} + 93 s^{2} + 66 s + 111$ }
\\\hline
\end{tabular}
}
\end{center}
}{
\begin{center}
\resizebox{15cm}{!}{
\begin{tabular}{|c|c|}
\hline
$d$ & $\countpol[ss]{,\PSL{2}(\bbZ)}{d}$
\\\hline
$1$ & $6$
\\\hline
$2$ & $3s+15$
\\\hline
$3$ & $2 s^{2} + 12 s + 26$
\\\hline
$4$ & $3 s^{3} + 9 s^{2} + 24 s + 39$
\\\hline
$5$ & $6 s^{4} + 6 s^{3} + 24 s^{2} + 36 s + 54$
\\\hline
$6$ & $s^{7} + 3 s^{6} - 2 s^{5} + 25 s^{4} + 56 s^{2} + 41 s + 71$
\\\hline
$7$ & $6 s^{8} + 12 s^{7} - 30 s^{6} + 54 s^{5} + 36 s^{3} + 54 s^{2} + 66 s + 90$
\\\hline
$8$ & $3 s^{11} + 9 s^{10} + 9 s^{9} - 33 s^{8} + 66 s^{7} - 60 s^{6} + 81 s^{5} + 24 s^{4} + 33 s^{3} + 93 s^{2} + 66 s + 111$
\\\hline
\end{tabular}
}
\end{center}
}
The highest $d$ for which the author has computed $\countpol[ss]{,\PSL{2}(\bbZ)}{d}$ so far is $d=12$ . $\countpol[ss]{,\PSL{2}(\bbZ)}{12}$ is given by $ s^{25} + 3 s^{24} + 18 s^{23} + 38 s^{22} + 67 s^{21} + 48 s^{20} - 49 s^{19} - 210 s^{18} - 186 s^{17} + 329 s^{16} + 738 s^{15} - 1131 s^{14} + 141 s^{13} + 264 s^{12} + 657 s^{11} - 1067 s^{10} + 542 s^{9} - 216 s^{8} + 753 s^{7} - 786 s^{6} + 508 s^{5} + 313 s^{4} - 224 s^{3} + 476 s^{2} - 143 s + 215 $ .
\ifthenelse{1=1}{
\begin{center}
\resizebox{15cm}{!}{
\begin{tabular}{|c|c||c|c|}
\hline
$d$ & \multicolumn{3}{c|}{ $\countpol[ss]{,\SL{2}(\bbZ)}{d}$ }
\\\hline
$1$ & $12$
&
$2$ & $6s+66$
\\\hline
$3$ &
\mbo\mbo\mbo\mbo\mbo\mbo\mbo\mbo\mbo\mbo\mbo\mbo\mbo\mbo\mbo\mbo\mbo
$4 s^{2} + 60 s + 232$
\mbo\mbo\mbo\mbo\mbo\mbo\mbo\mbo\mbo\mbo\mbo\mbo\mbo\mbo\mbo\mbo\mbo
&
$4$ & $6 s^{3} + 51 s^{2} + 282 s + 615$
\\\hline
$5$ & \multicolumn{3}{c|}{$12 s^{4} + 60 s^{3} + 288 s^{2} + 876 s + 1356$}
\\\hline
$6$ & \multicolumn{3}{c|}{$ 2 s^{7} + 6 s^{6} - 4 s^{5} + 144 s^{4} + 264 s^{3} + 1062 s^{2} + 2092 s + 2636 $}
\\\hline
$7$ & \multicolumn{3}{c|}{$ 12 s^{8} + 36 s^{7} - 24 s^{6} + 132 s^{5} + 624 s^{4} + 864 s^{3} + 2916 s^{2} + 4212 s + 4680 $}
\\\hline
$8$ & \multicolumn{3}{c|}{$ 6 s^{11} + 18 s^{10} + 18 s^{9} + 12 s^{8} + 324 s^{7} - 369 s^{6} + 1122 s^{5} + 1575 s^{4} + 2532 s^{3} + 6366 s^{2} + 7620 s + 7761 $}
\\\hline
\end{tabular}
}
\end{center}
}{
\begin{center}
\resizebox{15cm}{!}{
\begin{tabular}{|c|c|}
\hline
$d$ & $\countpol[ss]{,\SL{2}(\bbZ)}{d}$
\\\hline
$1$ & $12$
\\\hline
$2$ & $6s+66$
\\\hline
$3$ & $4 s^{2} + 60 s + 232$
\\\hline
$4$ & $6 s^{3} + 51 s^{2} + 282 s + 615$
\\\hline
$5$ & $12 s^{4} + 60 s^{3} + 288 s^{2} + 876 s + 1356$
\\\hline
$6$ & $ 2 s^{7} + 6 s^{6} - 4 s^{5} + 144 s^{4} + 264 s^{3} + 1062 s^{2} + 2092 s + 2636 $
\\\hline
$7$ & $ 12 s^{8} + 36 s^{7} - 24 s^{6} + 132 s^{5} + 624 s^{4} + 864 s^{3} + 2916 s^{2} + 4212 s + 4680 $
\\\hline
$8$ & $ 6 s^{11} + 18 s^{10} + 18 s^{9} + 12 s^{8} + 324 s^{7} - 369 s^{6} + 1122 s^{5} + 1575 s^{4} + 2532 s^{3} + 6366 s^{2} + 7620 s + 7761 $
\\\hline
\end{tabular}
}
\end{center}
}
\ifthenelse{1=1}{
\begin{center}
\resizebox{12cm}{!}{
\begin{tabular}{|c|c||c|c|}
\hline
$d$ & \multicolumn{3}{c|}{ $\countpol[ss]{,\GL{2}(\bbZ)}{d}$ }
\\\hline
$1$ & $4$
&
$2$ & $s + 14$
\\\hline
$3$ & 
\mbo\mbo\mbo\mbo\mbo\mbo\mbo\mbo\mbo\mbo\mbo\mbo
$ 8 s + 28 $ 
\mbo\mbo\mbo\mbo\mbo\mbo\mbo\mbo\mbo\mbo\mbo\mbo\mbo
&
$4$ & $ 3 s^{2} + 26 s + 56 $
\\\hline
$5$ & \multicolumn{3}{c|}{$ 20 s^{2} + 56 s + 88 $}
\\\hline
$6$ & \multicolumn{3}{c|}{$ s^{4} + 8 s^{3} + 59 s^{2} + 101 s + 147 $}
\\\hline
$7$ & \multicolumn{3}{c|}{$ 8 s^{4} + 36 s^{3} + 128 s^{2} + 156 s + 212 $}
\\\hline
$8$ & \multicolumn{3}{c|}{$ 2 s^{6} + 6 s^{5} + 34 s^{4} + 96 s^{3} + 223 s^{2} + 242 s + 323 $}
\\\hline
$9$ & \multicolumn{3}{c|}{$ 4 s^{7} + 16 s^{6} - 8 s^{5} + 148 s^{4} + 140 s^{3} + 400 s^{2} + 320 s + 440 $}
\\\hline
$10$ & \multicolumn{3}{c|}{$ s^{9} + 8 s^{8} + 20 s^{7} + 23 s^{6} + 35 s^{5} + 306 s^{4} + 206 s^{3} + 647 s^{2} + 435 s + 628 $}
\\\hline
\end{tabular}
}
\end{center}
}{
\begin{center}
\resizebox{15cm}{!}{
\begin{tabular}{|c|c|}
\hline
$d$ & $\countpol[ss]{,\GL{2}(\bbZ)}{d}$
\\\hline
$1$ & $4$
\\\hline
$2$ & $s + 14$
\\\hline
$3$ & $ 8 s + 28 $
\\\hline
$4$ & $ 3 s^{2} + 26 s + 56 $
\\\hline
$5$ & $ 20 s^{2} + 56 s + 88 $
\\\hline
$6$ & $ s^{4} + 8 s^{3} + 59 s^{2} + 101 s + 147 $
\\\hline
$7$ & $ 8 s^{4} + 36 s^{3} + 128 s^{2} + 156 s + 212 $
\\\hline
$8$ & $ 2 s^{6} + 6 s^{5} + 34 s^{4} + 96 s^{3} + 223 s^{2} + 242 s + 323 $
\\\hline
$9$ & $ 4 s^{7} + 16 s^{6} - 8 s^{5} + 148 s^{4} + 140 s^{3} + 400 s^{2} + 320 s + 440 $
\\\hline
$10$ & $ s^{9} + 8 s^{8} + 20 s^{7} + 23 s^{6} + 35 s^{5} + 306 s^{4} + 206 s^{3} + 647 s^{2} + 435 s + 628 $
\\\hline
\end{tabular}
}
\end{center}
}
\ifthenelse{1=1}{
\begin{center}
\resizebox{15cm}{!}{
\begin{tabular}{|c|c||c|c|}
\hline
$d$ & \multicolumn{3}{c|}{ $\countpol[ss]{,\PGL{2}(\bbZ)}{d}$ }
\\\hline
$1$ & $4$
&
$2$ & $ 14 $
\\\hline
$3$ & $ 4 s + 28 $
&
$4$ & $ s^{2} + 13 s + 55 $
\\\hline
$5$ & 
\mbo\mbo\mbo\mbo\mbo\mbo\mbo\mbo\mbo\mbo\mbo\mbo\mbo\mbo\mbo\mbo\mbo\mbo
$ 8 s^{2} + 32 s + 84 $
\mbo\mbo\mbo\mbo\mbo\mbo\mbo\mbo\mbo\mbo\mbo\mbo\mbo\mbo\mbo\mbo\mbo\mbo\mbo
&
$6$ &
$ 6 s^{3} + 18 s^{2} + 60 s + 132 $
\\\hline
$7$ & \multicolumn{3}{c|}{$ 4 s^{4} + 16 s^{3} + 44 s^{2} + 96 s + 180 $}
\\\hline
$8$ & \multicolumn{3}{c|}{$ s^{6} + 5 s^{5} + 11 s^{4} + 40 s^{3} + 64 s^{2} + 152 s + 253 $}
\\\hline
$9$ & \multicolumn{3}{c|}{$ 4 s^{7} + 12 s^{6} - 20 s^{5} + 80 s^{4} + 16 s^{3} + 156 s^{2} + 188 s + 324 $}
\\\hline
$10$ & \multicolumn{3}{c|}{$ 6 s^{8} + 22 s^{7} - 16 s^{5} + 154 s^{4} - 6 s^{3} + 256 s^{2} + 242 s + 426 $}
\\\hline
$11$ & \multicolumn{3}{c|}{$ 4 s^{10} + 20 s^{9} + 36 s^{8} - 72 s^{7} + 72 s^{6} + 56 s^{5} + 100 s^{4} + 148 s^{3} + 228 s^{2} + 372 s + 524 $}
\\\hline
$12$ & \multicolumn{3}{c|}{$ s^{13} + 4 s^{12} + 19 s^{11} + 27 s^{10} - 25 s^{9} - 15 s^{8} + 209 s^{7} - 268 s^{6} + 303 s^{5} + 178 s^{4} + 60 s^{3} + 438 s^{2} + 420 s + 659 $}
\\\hline
\end{tabular}
}
\end{center}
}{
\begin{center}
\resizebox{15cm}{!}{
\begin{tabular}{|c|c|}
\hline
$d$ & $\countpol[ss]{,\PGL{2}(\bbZ)}{d}$
\\\hline
$1$ & $ 4 $
\\\hline
$2$ & $ 14 $
\\\hline
$3$ & $ 4 s + 28 $
\\\hline
$4$ & $ s^{2} + 13 s + 55 $
\\\hline
$5$ & $ 8 s^{2} + 32 s + 84 $
\\\hline
$6$ & $ 6 s^{3} + 18 s^{2} + 60 s + 132 $
\\\hline
$7$ & $ 4 s^{4} + 16 s^{3} + 44 s^{2} + 96 s + 180 $
\\\hline
$8$ & $ s^{6} + 5 s^{5} + 11 s^{4} + 40 s^{3} + 64 s^{2} + 152 s + 253 $
\\\hline
$9$ & $ 4 s^{7} + 12 s^{6} - 20 s^{5} + 80 s^{4} + 16 s^{3} + 156 s^{2} + 188 s + 324 $
\\\hline
$10$ & $ 6 s^{8} + 22 s^{7} - 16 s^{5} + 154 s^{4} - 6 s^{3} + 256 s^{2} + 242 s + 426 $
\\\hline
$11$ & $ 4 s^{10} + 20 s^{9} + 36 s^{8} - 72 s^{7} + 72 s^{6} + 56 s^{5} + 100 s^{4} + 148 s^{3} + 228 s^{2} + 372 s + 524 $
\\\hline
$12$ & $ s^{13} + 4 s^{12} + 19 s^{11} + 27 s^{10} - 25 s^{9} - 15 s^{8} + 209 s^{7} - 268 s^{6} + 303 s^{5} + 178 s^{4} + 60 s^{3} + 438 s^{2} + 420 s + 659 $
\\\hline
\end{tabular}
}
\end{center}
}
\subsection{Classification for $\calG_{\paramExtDihed}$}
\label{subsec_classification_extended_dihedral}
%
%
%
We will now classify all absolutely simple \mbox{representations} of $\calG_{\paramExtDihed}= C_{2{\paramExtDihed}} *_{C_{\paramExtDihed}} C_{2{\paramExtDihed}}$ over a suitable ground field. This will in particular prove that the counting polynomials $\countpol[absim]{,\calG_{\paramExtDihed}}{m}$ are given by \eqref{equ_counting_polyn_extended_dihedral}. Recall that the dimension vector monoid of $\calG_c$ is given by \mbox{$\calT(\calG_{\paramExtDihed})\cong \left( {\bbN_0}^{2} \times_{{\bbN_0}} {\bbN_0}^{2}\right)^{\paramExtDihed}$ .}
\begin{sa}\thmabsatz
\label{sa_classification_G_a}
Let $K$ be a suitable field for $\calG_{\paramExtDihed}$ .\footnote{i.e. $\chara{K}$ does not divide $2{\paramExtDihed}$ and $K$ contains a primitive $2{\paramExtDihed}$-th root of unity} Denote its group of $2{\paramExtDihed}$-th roots of unity by $\mu_{2{\paramExtDihed}}(K)$ . Consider the presentation \mbox{$\calG_{\paramExtDihed} = \langle f,g\mid f^2 =g^2, f^{2{\paramExtDihed}} =1\rangle$ .} In dimension $1$ all representations $\rho:\calG_{\paramExtDihed} \to \GL{1}(K)$ are absolutely simple and \mbox{pairwise} non-isomorphic. They are given by the set \mbox{$\{(x,y)\in \mu_{2{\paramExtDihed}}(K) \mid x^2 =y^2\}$} via the bijection $\rho\mapsto (\rho(f),\rho(g))$ .\par%
All other absolutely simple representations $\rho$ of $\calG_{\paramExtDihed}$ have dimension $2$ and their isomorphism classes are in bijection with the set
\[
\{(\overline{x},y)\in K^2 \mid \overline{x}\in\nicefrac{\mu_{2{\paramExtDihed}}(K)}{\{\pm 1\}} , y\in K\setminus \{ \pm x\}\}
\]
where an explicit representative is given by
\[
(\rho(f),\rho(g)) = \left( 
\begin{pmatrix}
x & 0\\
0 & -x
\end{pmatrix}
,
\begin{pmatrix}
y & 1\\
x^2-y^2 & -y
\end{pmatrix}
\right)
\]
%
%
%
%
\end{sa}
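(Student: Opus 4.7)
The plan is to classify absolutely simple $\calG_{\paramExtDihed}$-representations by exploiting that $f^2=g^2$ is a central element of $\calG_{\paramExtDihed}$, so that after rescaling one obtains a representation of $\bbD_\infty = C_2 * C_2$, whose absolutely simple representations are much easier to handle. The one-dimensional case is immediate: a character $\rho:\calG_{\paramExtDihed}\to K^\times$ is determined by the pair $(\rho(f),\rho(g))$, which must lie in $\mu_{2{\paramExtDihed}}(K)^2$ with $\rho(f)^2=\rho(g)^2$, and every such pair yields a distinct (and automatically absolutely simple) representation.

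For an absolutely simple $\rho$ of dimension $d\geq 2$, I would first observe that $f^2=g^2$ is central in $\calG_{\paramExtDihed}$ (it is a power of both generators), so Schur's lemma forces $\rho(f^2)=zI$ for some $z\in\mu_{\paramExtDihed}(K)$. Choose a square root $x\in\mu_{2{\paramExtDihed}}(K)$ of $z$, which exists since $K$ is suitable, and set $F:=x\inv\rho(f)$ and $G:=x\inv\rho(g)$. Then $F^2=G^2=I$, so $(F,G)$ defines a representation of $\bbD_\infty$ on the same vector space with the same lattice of invariant subspaces as $\rho$; absolute simplicity transfers across this equivalence.

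The key structural step is to show that every absolutely simple representation of $\bbD_\infty$ of dimension at least $2$ is in fact two-dimensional. For this I would consider $T:=FG+GF$, verify directly that $FT=TF$ and $GT=TG$, and conclude $T=\tau I$ for some $\tau\in K$ by Schur. This yields $(FG)^2=\tau(FG)-I$, so $FG$ has eigenvalues $\lambda,\lambda\inv$ with $\lambda+\lambda\inv=\tau$. If $\lambda=\lambda\inv$, then $\lambda=\pm 1$ forces $F=\pm G$ and the representation decomposes into $F$-eigenlines. Otherwise $V=V_\lambda\oplus V_{\lambda\inv}$, and a short computation using $F^2=I$ shows that $F$ interchanges the two summands bijectively and $Gv=\lambda Fv$ for every $v\in V_\lambda$. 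The $(F,G)$-invariant subspaces are therefore precisely those of the form $W\oplus F(W)$ with $W\subseteq V_\lambda$, so absolute simplicity forces $\dim V_\lambda=1$ and hence $d=2$. This is where the main difficulty of the argument lies, since one must identify the right central element of the group algebra and exploit Schur to cut the dimension down.

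Once $d=2$ is established, diagonalising $F$ and reverting the substitution $\rho(f)=xF$, $\rho(g)=xG$ produces a basis in which $\rho(f)=\operatorname{diag}(x,-x)$ and $\rho(g)$ is traceless with determinant $-x^2$. Conjugation by a diagonal matrix normalises the $(1,2)$-entry of $\rho(g)$ to $1$, forcing the $(2,1)$-entry to be $x^2-y^2$, with $y\in K\setminus\{\pm x\}$ ranging freely (the exclusion coming from absolute simplicity, since $\rho(g)$ must not preserve either $F$-eigenspace). For the isomorphism classes, the centraliser of $\rho(f)$ in $\GL{2}$ is the diagonal torus, which preserves the normal form only through scalars; the only remaining identification comes from the basis swap exchanging the two $F$-eigenspaces, which replaces $(x,y)$ by $(-x,-y)$ after renormalising. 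This delivers exactly the parametrisation by $(\bar x,y)\in\bigl(\mu_{2{\paramExtDihed}}(K)/\{\pm 1\}\bigr)\times(K\setminus\{\pm x\})$ claimed in the proposition.
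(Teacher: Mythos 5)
Your proposal is correct, but it reaches the crucial dimension bound by a genuinely different route than the paper. The paper stays inside its representation-space formalism: after using centrality of $h=f^2=g^2$ and Schur to write $\rho(f)=\operatorname{diag}(x\cdot 1_{d_1},-x\cdot 1_{d_2})$, it lets the stabilizer $\GL{d_1}(K)\times\GL{d_2}(K)$ act on the block form of $\rho(g)$, uses $\rho(g)^2=z\cdot 1$ to force $d_1=d_2=r$ and the normal form $\rho(g)=\left(\begin{smallmatrix} L & 1_r\\ z\cdot 1_r-L^2 & -L\end{smallmatrix}\right)$, and then kills $d\geq 3$ by exhibiting an explicit two-dimensional subrepresentation of $\rho\otimes_K\overline{K}$ spanned by $\binom{v}{0},\binom{0}{v}$ for $v$ an eigenvector of $L$. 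You instead rescale by a square root of the central character to reduce to a pair of involutions, i.e.\ to $\bbD_\infty=C_2*C_2$, and use the central element $T=FG+GF$: Schur makes $T$ scalar, so $FG$ satisfies a quadratic, and the eigenspace analysis of $FG$ describes the full lattice of invariant subspaces ($W\oplus F(W)$ with $W\subseteq V_\lambda$), which simultaneously forces $d=2$ and essentially re-derives the two-dimensional normal form; this is the classical ``two involutions'' argument and is arguably more conceptual, while the paper's computation is more hands-on and matches the stabilizer/fibre-space machinery it has already set up (and you are in fact more explicit than the paper about the residual identification $(x,y)\sim(-x,-y)$ behind the $\mu_{2c}(K)/\{\pm 1\}$ parametrisation). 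Two small steps you assert should be spelled out: in the double-root case $\lambda=\lambda\inv=\pm 1$ one needs a one-line check that $\operatorname{Ker}(FG\mp 1)$ is $F$- and $G$-invariant before concluding $F=\pm G$; and $\lambda$ may only lie in a quadratic extension of $K$, so the decomposition $V=V_\lambda\oplus V_{\lambda\inv}$ should be carried out after base change to $\overline{K}$, which is harmless precisely because the hypothesis is absolute simplicity. Neither point affects the validity of your argument.
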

\begin{proof}
The case of dimension 1 is elementary. For dimension $d\geq 2$ we first note that $\rho(f)$ and $\rho(g)$ are diagonalizable with eigen values in $\mu_{2{\paramExtDihed}}(K)$ , because $\chara{K}$ is suitable.\par
Now take $h:= f^2 = g^2$ . As $h\in Z(\calG_{\paramExtDihed})$ is in the center, $\rho(h)=z.\einheitsmatrix[d]$ is a scalar matrix with $z\in\mu_{\paramExtDihed}(K)$ if $\rho$ is absolutely simple. Denote the two square roots of $z$ by $\pm x$ . By construction $\rho(f)$ and $\rho(g)$ have no eigen values except for $\pm x$ . We assume without loss of generality that
\[
\rho(f) = \begin{pmatrix}
x.\einheitsmatrix[d_1] & 0\\
0 & -x.\einheitsmatrix[d_2]
\end{pmatrix}
\]
with $d_1 = d-d_2 \neq 0, d$ and consider the action of \mbox{$\GL{d_1}(K) \times \GL{d_2}(K) \cong \stab[\GL{d}(K)]{\rho(f)}$} on $\rho(g) = \left(\begin{smallmatrix} L & M\\ N & W \end{smallmatrix}\right)$ .
Since $\rho$ is simple, we know that $t_1 M t_2\inv ,
t_2 N t_1\inv \neq 0$ for all $(t_1,t_2)$ . For $d=2$ we have $d_1=d_2=1$ and may take $(t_1,t_2)=(1,M)$ to get $(t_1,t_2).\rho(g) = \left(\begin{smallmatrix} L' & 1\\ N' & W' \end{smallmatrix}\right)$ . Using $\operatorname{Tr}(\rho(g))=0$ and $\rho(g)^2 = z.\einheitsmatrix[2]$ we obtain that  $y:=L'=-W'$ and $N'=x^2-y^2$ . This proves the claim for dimension 2.\par%
Now assume $\rho$ were an absolutely simple representation of dimension $d\geq 3$ . First we note that $d_1 = d_2$ : Denote by $c_1$ and $c_2$ the multiplicities of the eigen values $\pm x$ for the matrix $\rho(g)$ . As for $\rho(f)$ we have $0< c_1,c_2<d$ . Since every simultaneous eigen vector of $\rho(f)$ and $\rho(g)$ would span a subrepresentation of $\rho$ , the multiplicities have to fulfill
\begin{equation}
\label{zyeq}
c_\gamma + d_\delta \leq d \quad \fa
1\leq \gamma,\delta \leq 2
\end{equation}
The inequalities \eqref{zyeq} yield that $d=2r$ is even and $r=c_1=c_2=d_1=d_2$ . Furthermore we may assume that $\rho(g)$ is of the form $\rho(g) = \left(\begin{smallmatrix} L & \einheitsmatrix[r]\\ N & W \end{smallmatrix}\right)$ : By standard linear algebra arguments we may find $(t_1,t_2)\in \GL{d}(K)^2$ s.t. $t_1 M t_2\inv = \left(\begin{smallmatrix} \einheitsmatrix[\operatorname{rk}(M)] & 0\\ 0 & 0 \end{smallmatrix}\right)$ and it remains to show that $\operatorname{rk}(M)=r$ . We write $L = \left(\begin{smallmatrix} L_1 & L_2\\ L_3 & L_4 \end{smallmatrix}\right)$ and $W = \left(\begin{smallmatrix} W_1 & W_2\\ W_3 & W_4 \end{smallmatrix}\right)$ as block matrices with $L_1,W_1$ square matrices of size $\operatorname{rk}(M)$ . Using $\rho(g)^2 = z.\einheitsmatrix[d]$ we see that $W_2=0$ . This means that the last $r-\operatorname{rk}(M)$ basis elements span a subrepresentation, so by simpleness of $\rho$ we have $r= \operatorname{rk}(M)$ .\par%
By again using $\rho(g)^2 = z.\einheitsmatrix[d]$ we may deduce $\rho(g) = \left(\begin{smallmatrix} L &\;\; \einheitsmatrix[r]\\ z.\einheitsmatrix[r]-L^2 &\;\; -L \end{smallmatrix}\right)$ . Now let $v\in \overline{K}^r$ be an eigen vector of $L$ considered as a matrix over $\overline{K}$ . One checks easily that $\binom{v}{0}$ and $\binom{0}{v}$ span a two dimensional subrepresentation of the base extension $\rho\otimes_K \overline{K}$ which contradicts our assumption that $\rho$ is absolutely simple.
\end{proof}
\section{Structural properties}
\label{section_structural_properties}
\noindent We now discuss some of the main structural properties of the counting polynomials: their degree and the symmetries occuring among them. We start with the degree.
\begin{sa}\thmabsatz
Let $m\in \calT(\calG)$ be an arbitrary dimension vector and $\bbF_q$ suitable for $\calG$ . The polynomial $\countpol[ss]{}{m}$ is monic of degree $\dim \modulirep{\bbF_q[\calG]}{m}$ . 
If $\countpol[absim]{}{m} \neq 0$ , then $\countpol[absim]{}{m}$ is monic too and of the same degree
\begin{equation}
\label{equ_degree_absim}
\dim \modulirep{\bbF_q[\calG]}{m} = \dim \modulirep[absim]{\bbF_q[\calG]}{m} = 1-\langle m,m\rangle_\calG
%
%
\end{equation}
\end{sa}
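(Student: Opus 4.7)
The plan is to compute $\dim \operatorname{Rep}_m(\bbF_q[\calG]) = d^2 - \langle m,m\rangle_\calG$ with $d = \vert m\vert$, perform a generic-orbit dimension count to obtain $\dim \modulirep{\bbF_q[\calG]}{m} = 1 - \langle m,m\rangle_\calG$ in the presence of absolutely simple representations, and conclude monicity via geometric integrality and the Lang-Weil estimate.

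First I would establish $\deg P_m^\calG = d^2 - \langle m,m\rangle_\calG$. From Proposition~\ref{sa_counting_repspace} we have $P_m^\calG = \countgl[d]^J \prod_{i=0}^{I} P^{\calG_i}_{m_i} / \prod_{j=1}^{I+J} P^{\calG_j'}_{u_j}$. Using $\deg\countgl[k] = k^2$, the identity $\deg P^{\calC}_m = \vert m\vert^2 - \sum_\gamma m(\gamma)^2$ coming from Example~\ref{bsp_counting_repspace}, the bookkeeping $J + (I+1) - (I+J) = 1$, and the expression for $\langle m,m\rangle_\calG$ from Proposition~\ref{sa_Eulerform_virt_free}(c) combined with Example~\ref{bsp_eulerform}, the claimed degree drops out. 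Since $\bbF_q[\calG]$ is formally smooth, $\operatorname{Rep}_m(\bbF_q[\calG])$ is smooth by fact (iii) of Subsection~\ref{subsec_geometric_methods}; being connected by definition of $\calT(\calG)$ and containing a rational point, it is geometrically integral. Hence Lang-Weil forces $P_m^\calG$ to be monic of degree $\dim \operatorname{Rep}_m(\bbF_q[\calG]) = d^2 - \langle m,m\rangle_\calG$.

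I would then transfer to the GIT quotient. The ring of $\GL{d,\bbF_q}$-invariants sits inside the coordinate ring of the geometrically integral $\operatorname{Rep}_m(\bbF_q[\calG])$ and is therefore an integral domain, making $\modulirep{\bbF_q[\calG]}{m}$ geometrically integral as well. If $\countpol[absim]{}{m} \neq 0$, then $\operatorname{Rep}_m^{\text{absim}}(\bbF_q[\calG])$ is a non-empty, hence dense, open subset. For $x$ inside it the stabilizer $\stab{x}$ equals the scalar subgroup $\bbG_m$ because $\End[{\bbF_q[\calG]}]{\calM_x} = \bbF_q$, and the orbit $\orbit{x}$ is closed by fact (vi). Applying the classical dimension formula $\dim X /\!\!/ G = \dim X - \dim G + \dim(\text{generic stabilizer})$ for reductive quotients along a generic closed orbit gives $\dim \modulirep{\bbF_q[\calG]}{m} = (d^2 - \langle m,m\rangle_\calG) - d^2 + 1 = 1 - \langle m,m\rangle_\calG$.

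Monicity of $\countpol[ss]{}{m}$ follows by Lang-Weil once more: $\modulirep{\bbF_q[\calG]}{m}$ is geometrically integral and polynomial count by Theorem~\ref{MAIN THEOREM}, so its counting polynomial is monic of degree $\dim \modulirep{\bbF_q[\calG]}{m}$. Under the additional assumption $\countpol[absim]{}{m} \neq 0$, $\modulirep[absim]{\bbF_q[\calG]}{m}$ is open dense inside $\modulirep{\bbF_q[\calG]}{m}$, hence geometrically integral of the same dimension, and so $\countpol[absim]{}{m}$ is monic of the same degree. The most delicate step is the generic-orbit dimension count: one must ensure that the density of the absim locus in $\operatorname{Rep}_m(\bbF_q[\calG])$ and the closedness of its orbits genuinely translate into the dimension of the GIT quotient via the reductive quotient formula. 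The remaining ingredients amount to routine degree arithmetic combined with applications of the already-established Theorem~\ref{MAIN THEOREM}.
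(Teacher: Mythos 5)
Your proposal is correct and follows essentially the same route as the paper: compute $\deg P_m^\calG = \vert m\vert^2-\langle m,m\rangle_\calG$ from \eqref{equ_counting_repspace_formula}, use smoothness/connectedness plus Lang--Weil to identify degrees of counting polynomials with dimensions and to get monicity, and pass to the GIT quotient via the absolutely simple locus, where the stabilizers reduce to scalars. The only cosmetic differences are that you obtain geometric integrality of $\modulirep{\bbF_q[\calG]}{m}$ through the invariant ring being a domain (the paper uses that it is the image of the irreducible $\operatorname{Rep}_m(\overline{\bbF_q}[\calG])$) and that you phrase the quotient-dimension step as a generic closed-orbit count rather than as the free $\PGL{\vert m\vert,\bbF_q}$-quotient identification $\modulirep[absim]{\bbF_q[\calG]}{m}\cong\nicefrac{\operatorname{Rep}_m^{\text{absim}}(\bbF_q[\calG])}{\PGL{\vert m\vert,\bbF_q}}$; both are the same computation.
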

\begin{proof}
We first recall a well-known theorem about counting polynomials which is due to S. Lang and A. Weil: Let $X$ be a polynomial count $\bbF_q$-scheme. If $X$ is \mbox{geometrically} irreducible, then its counting polynomial is monic of degree \mbox{$\dimension{X}$ .}\footnote{see \cite[Thm. 7.7.1]{xxpoonen}} This proves the claim on $\countpol[ss]{}{m}$ , since it is a counting polynomial of $\modulirep{{\bbF_q}[\calG]}{m}$ which is geometrically irreducible, because the connected component $\operatorname{Rep}_m(\overline{\bbF_q}[\calG])$ surjects onto \mbox{$\modulirep{\overline{\bbF_q}[\calG]}{m} \cong \modulirep{{\bbF_q}[\calG]}{m} \times_{\bbF_q} \spec{\overline{\bbF_q}}$} and is irreducible by {(iii) in Subsection \ref{subsec_geometric_methods}}.\footnote{In particular we have $\countpol[ss]{}{m}\neq 0$ as $\modulirep{\bbF_q[\calG]}{m}(\bbF_q)$ is non-empty because $\operatorname{Rep}_m(\bbF_q[\calG])(\bbF_q)$ is.}\par%
The claim on $\countpol[absim]{}{m}$ is proven analogously by replacing $\operatorname{Rep}_m(\overline{\bbF_q}[\calG])$ with its open subscheme $\operatorname{Rep}_m^{\text{absim}}(\overline{\bbF_q}[\calG])$ and it remains to prove the two equations in \eqref{equ_degree_absim}: The first equation follows from $\modulirep[absim]{\bbF_q[\calG]}{m} \subseteq \modulirep{\bbF_q[\calG]}{m}$ being open and \mbox{non-empty} if $\countpol[absim]{}{m}\neq 0$ . For the second equation we note that there is an induced $\PGL{\vert m\vert,\bbF_q}$-action on representation spaces that operates freely on $\operatorname{Rep}_m^{\text{absim}}(\bbF_q[\calG])$ and that its quotient $\nicefrac{\operatorname{Rep}_m^{\text{absim}}(\bbF_q[\calG])}{\PGL{\vert m\vert, \bbF_q}}$ is isomorphic to $M^{\text{absim}}(\bbF_q[\calG],m)$ . Hence,
\[
\dim {M^{\text{absim}}(\bbF_q[\calG],m)} = \dim \operatorname{Rep}_m^{\text{absim}}(\bbF_q[\calG]) - \dim \PGL{\vert m\vert,\bbF_q}
%
\]
Moreover we have $\dim \operatorname{Rep}_m^{\text{absim}}(\bbF_q[\calG]) = \dim \operatorname{Rep}_m(\bbF_q[\calG])$ , because $\operatorname{Rep}_m(\bbF_q[\calG])$ is geometrically irreducible. So the second equation in \eqref{equ_degree_absim} is equivalent to the identity $\deg P_m^\calG = \vert m\vert^2 -\langle m,m\rangle_\calG$
which can be verified using our general formula \eqref{equ_counting_repspace_formula}.
\end{proof}
In Subsection \ref{subsec_examples_counting_polynomials} we have seen that the counting polynomials are invariant with respect to certain symmetries on the dimension vectors of some virtually free groups $\calG$ .  More specifically there is a finite group $S_\calG$ acting on $K[\calG]$ for $K$ suitable and by functoriality on each $\calT_d(\calG)$ , $d\in\bbN_0$ such that $\countpol[absim]{}{m} = \countpol[absim]{}{n}$ and $\countpol[ss]{}{m} = \countpol[ss]{}{n}$ if $m,n\in\calT_d(\calG)$ belong to the same $S_\calG$-orbit. We will now sketch the construction of this group and its action on $K[\calG]$ .\par%
Let $K$ be a field which is suitable for $\calG$ . Hence, all of the finite dimensional group algebras in \eqref{equ_decomp_groupalg} are of the form
\begin{equation*}
C \cong K^{c_1} \times \bfM_2(K)^{c_2} \times \ldots \times \bfM_e(K)^{c_e}
\end{equation*}
We construct the group $S_\calG$ and its actions iteratively and we start with the case of (group algebras of) finite groups: The symmetric group $S_{c_\epsilon}$ acts naturally on $\bfM_\beta(K)^{c_\epsilon}$ via $\tau.(M_1,\ldots , M_{c_\epsilon}) = (M_{\tau(1)},\ldots, M_{\tau(c_\epsilon)})$ for each $1\leq \gamma\leq c$ , hence, $S_\calC := S_{c_1}\times \ldots \times S_{c_e}$ acts on $\calC$ via $K$-algebra automorphisms.\par
Now assume $A,B,C$ are finite groups acting via $K$-algebra automorphisms on $K$-algebras $\calA,\calB,\calC$ and assume we are given group homomorphisms $A,B\to C$ and injective $K$-algebra homomorphisms $\calC\hookrightarrow \calA,\calB$ which are $A$- and $B$-equivariant. Then $A\times_C B$ acts naturally on $\calA *_\calC \calB$ via $K$-algebra automorphisms.\par%
Finally assume we have group homomorphisms $\varphi,\theta: A\to C$ and $K$-algebra embeddings $\iota,\kappa: \calC\hookrightarrow \calA$ such that $\iota$ is $A$-equivariant with respect to $\varphi$ and $\kappa$ via \mbox{$\theta$ .} Then $\operatorname{Eq}(\varphi,\theta)\subseteq A$ acts naturally on $\HNN{\calA}{\calC}{\iota,\kappa}$ via $K$-algebra automorphisms.\par%
All of the discussion so far works without any assumptions on the involved \mbox{algebras.} However, to iteratively get an induced action on $K[\calG]$ from the actions on the group algebras $K[\calG_i]$ and $K[\calG_j']$ we need group homomorphisms $S_{\calG_{s(j)}} \to S_{\calG_j'} \leftarrow S_{\calG_{t(j)}}$ for each $j$ such that the embeddings $K[\calG_{s(j)}] \hookleftarrow K[\calG_j'] \hookrightarrow K[\calG_{t(j)}]$ become $S_{\calG_{s(j)}}$- and $S_{\calG_{t(j)}}$-equivariant. If $\calG_j'$ is the trivial group for each $j$ , this obstruction is trivial and we obtain an action of $\prod_{i=0}^I S_{\calG_i}$ on $K[\calG]$ .\par%
However, in general this is a non-trivial combinatorial task which is why we assume from now on that $\calG_i$ is Abelian for each $0\leq i\leq I$ .\footnote{Of course one could also consider a hybrid situation where for each $j$ we have $\calG_j'$ being trivial or $\calG_{s(j)}$ and $\calG_{t(j)}$ being Abelian.} Hence, each of the $\calC$ above is of the form $K^c$ , $c=\dimension[K]{\calC}$ with an action of the symmetric group $S_{c}$ . We consider an injective $K$-algebra homomorphism $\iota: K^c \hookrightarrow K^b$ and denote by $e_\gamma'$ , $0\leq \gamma <c$ the $\gamma$-th standard basis vector of $K^c$ and by $e_\beta$ , $0\leq \beta<b$ the $\beta$-th standard basis vector of $K^b$ . Both $(e_\gamma')_\gamma$ and $(e_\beta)_\beta$ are systems of pairwise orthogonal central primitive idempotents. Hence, there is a partition
\begin{equation}
\label{equ_partition_symmetry_action}
\bbI:=\{0,1,\ldots, b-1\} = \bigsqcup_{\gamma= 0}^{c-1} \bbI_\gamma
\end{equation}
such that $\iota(e_\gamma')=\sum_{\beta\in \bbI_\gamma} e_\beta$ .\footnote{Here we use commutativity of $K^b$ $-$ in the non-commutative case $(\iota(e_\gamma'))_\gamma$ would still be pairwise orthogonal idempotents, but in general not central.} Recall that each (absolutely) simple $K^b$-module is \mbox{isomorphic} to precisely one of the modules $e_\beta.K^b$ and that the \mbox{(absolutely)} simple $K^c$-modules analogously are given by $e_\gamma'.K^c$ . By \mbox{construction} of the \mbox{partition} \eqref{equ_partition_symmetry_action} we have $\iota^*(e_{\beta}.K^b)\cong e_\gamma'.K^c$ for each $\beta\in \bbI_\gamma$ . Now consider the subgroup
\[
\overline{S_b} :=\{\tau\in S_b\mid \fa 0\leq \gamma<c: \esex \!!\; 0\leq \overline{\tau}(\gamma)<c : \tau(\bbI_\gamma) = \bbI_{\overline{\tau}(\gamma)} \}
\]
i.e. the subgroup of those $\tau\in S_b$ that preserve the partition \eqref{equ_partition_symmetry_action}. We obtain a group homomorphism $\overline{S_b}\to S_c, \tau\mapsto\overline{\tau}$ with respect to which the $K$-algebra embedding $\iota$ is $\overline{S_b}$-equivariant and applying the iterative process described above yields a finite group $S_\calG$ acting on $K[\calG]$ via $K$-algebra automorphisms.\par%
By functoriality every group action on $K[\calG]$ via $K$-algebra automorphisms yields an induced action on $M(\bbF_q[\calG],d)$ , $M^{\text{absim}}(\bbF_q[\calG],d)$ and $\calT_d(\calG)$ for each $d\in\bbN_0$ . If $m,n\in\calT_d(\calG)$ lie in the same orbit, then we obtain isomorphisms
\[
M^{\text{absim}}(\bbF_q[\calG],m)\cong M^{\text{absim}}(\bbF_q[\calG],n)\quad , \quad M(\bbF_q[\calG],m) \cong M(\bbF_q[\calG],n)
\]
So in particular the counting polynomials for $m$ and $n$ coincide.\par%
\begin{bsp}\thmabsatz
We consider the case of $\calG = C_a *_{C_c} C_b$ for $a,b\geq 2$ , $c$ a common divisor of $a,b$ . The partitions \eqref{equ_partition_symmetry_action} of $\bbI:=\{0,\ldots, a-1\}$ and $\bbJ:=\{0,1,\ldots, b-1\}$ are given by $\bbI_\gamma = \{\alpha\mid \alpha\equiv \gamma \mod(c)\}$ , $\bbJ_\gamma = \{\beta\mid \beta\equiv \gamma \mod(c)\}$ . This determines the subgroups $\overline{S_a}\subseteq S_a$ and $\overline{S_b}\subseteq S_b$ .\par%
The action of $S_{\calG} = \overline{S_a} \times_{S_c} \overline{S_b}$ on $\calT(C_a *_{C_c} C_b)\cong {\bbN_0}^a \times_{{\bbN_0}^c} {\bbN_0}^b$ coincides with the restriction of the natural $S_a\times S_b$-action on ${\bbN_0}^a\times {\bbN_0}^b$ .
\end{bsp}
\end{document}